\setlist[itemize,enumerate]{leftmargin=*}
\newtheorem{theorem}{Theorem}[section]
\newtheorem{proposition}[theorem]{Proposition}
\newtheorem{lemma}[theorem]{Lemma}
\newtheorem{corollary}[theorem]{Corollary}
	\newenvironment{customthm}[1]
  {\innercustomthm}
  {\endinnercustomthm}
\theoremstyle{definition}
\newtheorem{definition}[theorem]{Definition}
\theoremstyle{remark}
\newtheorem{remark}[theorem]{Remark}
\newtheorem{Remark}[theorem]{Remarks}
\newlength{\szer}
\def\N{{\mathbf N}}
\def\Q{{\mathbf Q}}
\def\R{\mathbf R}
\def\Z{{\mathbf Z}}
\def\Z{\mathbf Z}
\def\si{\sigma_{\infty}}
\def\int{\mathrm{int}}
\def\elem(#1,#2){  \{ \frac{#1}{\overline{\ #2\ }}\}  }
\newcommand{\set}[1]{\left\{#1\right\}}
\newcommand{\RZ}[1]{\textnormal{RZ}#1}
\newcommand*\rel@kern[1]{\kern#1\dimexpr\macc@kerna}
\newcommand*\widebar[1]{%
  \begingroup
  \def\mathaccent##1##2{%
    \rel@kern{0.8}%
    \overline{\rel@kern{-0.8}\macc@nucleus\rel@kern{0.2}}%
    \rel@kern{-0.2}%
  }%
  \macc@depth\@ne
  \let\math@bgroup\@empty \let\math@egroup\macc@set@skewchar
  \mathsurround\z@ \frozen@everymath{\mathgroup\macc@group\relax}%
  \macc@set@skewchar\relax
  \let\mathaccentV\macc@nested@a
  \macc@nested@a\relax111{#1}%
  \endgroup
}
\begin{document}
\setcounter{tocdepth}{2}
\title{VALUATIONS AND HENSELIZATION}
\author{Ana Bel\'en de Felipe and Bernard Teissier}
\address{ABdF: BCAM - Basque Center for Applied Mathematics, Mazarredo 14, 
E-48009 Bilbao, Basque Country--Spain.}
\email{adefelipe@bcamath.org}
\address{BT: IMJ-PRG, CNRS, Universit\'e Paris Diderot, Institut de Math\'ematiques de Jussieu-Paris Rive Gauche, B\^at. Sophie Germain, Place Aur\'elie Nemours, F-75013, Paris, France.}
\email{bernard.teissier@imj-prg.fr}

\keywords{Valuations, Henselization}
\thanks{The first author is supported by ERCEA Consolidator Grant 615655-NMST and also by the Basque Government through the BERC 2018-2021 program, and by Spanish Ministry of Economy and Competitiveness MINECO: BCAM Severo Ochoa excellence accreditation SEV-2017-0718 and MTM2016-80659-P. Part of this work was carried when she was a member of the Institute of Mathematics of the University of Barcelona.}
\newpage

\pagestyle{myheadings}
\markboth{\rm A. de Felipe, B. Teissier}{\rm Extension of valuations}

\subjclass[2000]{12J20, 16W60, 14B25}
\begin{abstract}We study the extension of valuations centered in a local domain to its henselization. We prove that a valuation $\nu$ centered in a local domain $R$ uniquely determines a minimal prime $H(\nu)$ of the henselization $R^h$ of $R$ and an extension of $\nu$ centered in $R^h/H(\nu)$, which has the same value group as $\nu$. Our method, which assumes neither that $R$ is noetherian nor that it is integrally closed,  is to reduce the problem to the extension of the valuation to a quotient of a standard \'etale local $R$-algebra and in that situation to draw valuative consequences from the observation that the Newton--Hensel algorithm for constructing roots of polynomials produces sequences that are always pseudo--convergent in the sense of Ostrowski.\par\noindent We then apply this method to the study of the approximation of elements of the henselization of a valued field by elements of the field and give a characterization of the henselian property of a local domain $(R,m_R)$ in terms of the limits of certain pseudo--convergent sequences of elements of $m_R$ for a valuation centered in it. Another consequence of our work is to establish in full generality a bijective correspondence between the minimal primes of the henselization of a local domain $R$ and the connected components of the Riemann--Zariski space of valuations centered in $R$.\end{abstract}

\maketitle
\section{Introduction}
Henselization plays an important role in the theory of valued fields, in particular because the valuation of a henselian valued field extends uniquely to any algebraic extension and because maximal valued fields are henselian. Henselization of local rings is also a fundamental tool in algebraic geometry, in particular because of its relation with the implicit function theorem and the detection of local analytic branches of an excellent scheme, but also because of the role it plays in the study of extensions of a valuation of an excellent local domain $(R,m_R)$ to its $m_R$-adic completion, and thus in some approaches to local uniformization in arbitrary characteristic. In this paper we study the henselization of local domains with a view to applications to algebraic geometry but with methods close to those of the theory of valued fields. More precisely, we study the Newton--Hensel algorithm which is the origin of the henselian property from a valuative viewpoint based on the observation that when applied to a polynomial $F(X)\in R[X]$ defining a standard \'etale extension this algorithm always produces a pseudo--convergent sequence in the sense of Ostrowski of elements of the maximal ideal of $R$.   

\subsection*{Extension of valuations to the henselization} A valuation on a local domain $R$ is a valuation $\nu\colon K^*\to\Phi$ of the fraction field $K$ of $R$ with values in an abelian totally ordered group $\Phi$ such that the \textit{value semigroup} $\nu(R\setminus\{0\})$ of the valuation is contained in $\Phi_{\geq 0}$. One usually adds an element $\infty$ larger than all elements of $\Phi$ and sets $\nu(0)=\infty$. If we denote by $R_\nu\subset K$ the valuation ring of $\nu$, then a valuation $\nu$ on $R$ corresponds to an inclusion $R\subseteq R_\nu$. The valuation is \textit{centered in} $R$ if the maximal ideal $m_R$ of $R$ is the ideal of elements with value in $\Phi_{>0}\cup\{\infty\}$. This means that $R_\nu$ \textit{dominates} $R$ in the sense that $R\subseteq R_\nu$ and, denoting by $m_\nu$ the maximal ideal of $R_\nu$, we have the equality $m_\nu\cap R=m_R$. If $K\hookrightarrow L$ is a field extension, then a valuation $\tilde\nu$ of $L$ is an \emph{extension} of $\nu$ if $R_{\tilde\nu}\cap K=R_\nu$ where $R_{\tilde\nu}$ denotes the valuation ring of $\tilde\nu$.\par

The main result of this article is the following generalization of \cite[Theorem 7.1]{HOST}:

\begin{customthm}{1}\label{mainthm}
Let $R$ be a local domain and let $R^h$ be its henselization. If $\nu$ is a valuation centered in $R$, then:
\begin{enumerate}
    \item\label{hext} There exists a unique prime ideal $H(\nu)$ of $R^h$ lying over the zero ideal of $R$ such that $\nu$ extends to a valuation $\tilde\nu$ centered in $R^h/H(\nu)$ through the inclusion $R\subset R^h/H(\nu)$. In addition, the ideal $H(\nu)$ is a minimal prime and the extension $\tilde\nu$ is unique.
    \item\label{grouphext} With the notation of \eqref{hext}, the valuations $\nu$ and $\tilde\nu$ have the same value group.
\end{enumerate}
\end{customthm}

Remembering that a semivaluation centered in a local ring $T$ and supported at a prime ideal $\mathfrak p$ of $T$ means a valuation centered in the local domain $T/\mathfrak p$, we can paraphrase this as:\par\noindent
\textit{Any valuation centered in a local domain has a unique extension as a semivaluation centered in its henselization, which has the same value group and is supported at a minimal prime.}\par\medskip

Recall that the henselization $R\to R^h$ factorizes uniquely local morphisms from $R$ to henselian local rings; it is unique up to unique isomorphism of $R$-algebras. We fix one.\par
A large part of Theorem~\ref{mainthm} was proved in \cite{HOST} under the additional assumption that $R$ is (quasi)-excellent and thus noetherian. The assumption of quasi--excellence is used in particular to ensure finiteness properties for the normalizations of the ring $R$ and its quotients. We establish the result in full generality and, in contrast to the proof given in \cite{HOST}, our proof has the advantage of being constructive.

\subsection*{Motivation} Theorem~\ref{mainthm} plays an important role in the classification of extensions of valuations of a quasi-excellent local domain to its completion according to the method of \cite{HOST}. One goal of this classification is to prove the following analogue of the second part of Theorem~\ref{mainthm}: \textit{There exists an extension of a valuation of a  quasi-excellent local domain $R$ to a semivaluation {\rm with the same value group} of its completion $\hat R$}. This yet unproven statement seems crucial for proofs of local uniformization in positive characteristic. Such an extension of the valuation allows one to use the advantages of completeness for intermediate steps without losing the algebraicity of the uniformizing modifications in the end. See Conjecture 1.1 of \cite{HOST} which sets in the framework of \cite{HOST} and makes more precise a conjecture of the second author going back to 2003 (see \cite[*proposition* 5.19]{T}).\par 

Theorem~\ref{mainthm}.\eqref{hext} is also an essential ingredient in the proof by the first author that, up to homeomorphism in the Zariski topology, the space of valuations centered in a non singular point of an algebraic variety over an algebraically closed field depends only on the dimension (see \cite[Theorem 3.5]{F}). In this direction, Theorem~\ref{mainthm}.\eqref{hext} allows us to establish in full generality a bijective correspondence between the minimal primes of the henselization of a local domain $R$ and the connected components of the Riemann--Zariski space of valuations centered in $R$ (see Corollary~\ref{homeoRZ}).

\subsection*{Strategy of the proof} The henselization of a local domain $R$ is the limit of the inductive system of its \textit{Nagata extensions} (also known as standard \'etale extensions) $S=R[X]/(F(X))_{(m_R,X)}$ where $F(X)=X^n+\cdots+a_{n-1}X-a_n\in R[X]$  is a \textit{Nagata polynomial}: $a_n\in m_R,\ a_{n-1}\notin m_R$. It suffices therefore to study extensions of valuations centered in $R$ to its Nagata extensions. Our approach is to do this from the viewpoint of Ostrowski's pseudo--convergent sequences, which gives a simple description of the minimal prime and an explicit construction of the extended valuation implying immediately that the value group does not change. We give more details in what follows.\par

Let us keep the notation introduced above. Let $S$ be a Nagata extension of $R$ defined by a Nagata polynomial $F(X)\in R[X]$. Any extension of $\nu$ to the algebraic closure $\widebar K$ of $K$ determines a root $\si\in\widebar K$ of $F(X)$ that is a limit in the sense of Ostrowski of a pseudo--convergent sequence of elements $\sigma_i\in m_R$, $i\geq1$, which is attached to $F(X)$ by Newton's method. The minimal polynomial over $K$ of all these distinguished roots $\si$ coincide, producing a minimal prime $H_S(\nu)$ of $S$ (which is the trace of the minimal prime $H(\nu)$ of $R^h$). The ideal $H_S(\nu)$ and the extension of $\nu$ to a valuation centered in the local domain $S/H_S(\nu)$ are unique, and Theorem~\ref{mainthm}.\eqref{hext} follows by passage to the inductive limit.\par

We prove Theorem~\ref{mainthm}.\eqref{grouphext} by proving that the value group $\Phi$ of $\nu$ does not change after extension to $S/H_S(\nu)$. To do this, we fix a presentation of the previous quotient as a local $R$--algebra $R[\si]_{(m_R,\si)}\subset\widebar K$ and investigate the way in which $\nu$ determines the value of the extended valuation $\tilde\nu$ on each element $h(\si)$ with $h(X)\in R[X]$. We describe the behavior of the valuations $\nu(h(\sigma_i))$, $i\geq1$. Indeed, if these valuations form an eventually constant sequence, then their stationary value is $\tilde\nu(h(\si))$; and otherwise, they are cofinal in a certain convex subgroup of $\Phi$. Except in some particular cases (for instance, if the valuation $\nu$ is of rank one, in which case the cofinality implies that $h(\si)=0$), this is not sufficient to obtain the desired result.\par

In order to compute $\tilde\nu(h(\sigma_\infty))$ in general, we present the domain $R$ as the inductive limit of its subdomains $A_0$ which are essentially of finite type over the prime ring, and for which the restriction $\nu_0$ of $\nu$ is of finite rank by Abhyankar's inequality. With the notation of Theorem~\ref{mainthm}, the local domain $R^h/H(\nu)$ is then the inductive limit of the $A_0^h/H(\nu_0)$ and each $\tilde\nu_0$ is obtained by restriction of $\tilde\nu$. Hence we can choose $A_0$ containing the coefficients of $F(X)$ and $h(X)$, and compute $\tilde\nu(h(\si))$ as the $\tilde\nu_0$--value of $h(\si)$ seen as element of $A_0[\si]_{(m_{A_0},\si)}\subset R[\si]_{(m_R,\si)}$. After an iterative procedure (the main point here is that $\nu_0$ has finite rank), we determine a finite extension of the fraction field of $A_0$, a unique extension $\nu_\ell$ of $\nu_0$ to this new field, which has the same value group, and a pseudo--convergent sequence $(\chi_i^{(\ell)})_{i\geq1}$ for $\nu_\ell$ such that $\tilde\nu_0(h(\si))$ equals $\nu_\ell(h(\chi_i^{(\ell)}))$ for all $i$ large enough.\par

\subsection*{Applications of our method} The subdomains of $R$ which are essentially of finite type over the prime ring are excellent, and we could have applied directly this argument to reduce the general case to the case treated in \cite{HOST}. However, we think that our method is much more informative. For example, since it applies to valuation rings we use it to prove the result \cite[Theorem 1.1]{Ku} of F--V. Kuhlmann on the approximation of elements of the henselization of a valued field by elements of the field and a general characterization of the henselian property in terms of pseudo--convergent sequences (see Theorem~\ref{ap} and Proposition~\ref{valcrithen}, respectively). We hope that our method can also be used to study the changes in the value \textit{semigroup} which can take place when passing to the henselization, even for regular local rings, as discovered by Cutkosky in \cite[Theorem~1.5]{C}.\par

\subsection*{Organization of the paper} 

In Sections~\ref{sec2} and~\ref{sec3} we prove Theorem~\ref{mainthm}.\eqref{hext} and Theorem~\ref{mainthm}.\eqref{grouphext}, respectively. We avoid using the approximations of $R$ by its noetherian subrings as much as we can because some general results such as Lemma~\ref{rat} may be of independent interest. The last two sections are applications of our approach. In Subsection~\ref{RZ} we study the decomposition into connected components of the Riemann--Zariski space of valuations centered in a local domain, and in Subsection~\ref{sec5} we revisit the result of F--V. Kuhlmann mentioned above. In Section~\ref{sec4} we propose a characterization of the henselian property of rings in terms of pseudo--convergent sequences.

\subsection*{Aknowledgements} We are grateful to Franz--Viktor Kuhlmann for useful suggestions.


\section{Nagata extensions and Newton-Hensel approximations from a valuative viewpoint}\label{sec2}

In this section we assume that $R$ is a local domain and $\nu$ a valuation centered in $R$, and we study from a valuative viewpoint the process of henselian approximation. We denote by $m_R$ the maximal ideal of $R$ and by $K$ its fraction field.\par

We start with the following result of J.-P. Lafon:\par\noindent
\begin{proposition}{\rm(\cite[Proposition 6]{L})}\label{standard} Let $R$ be a local ring with maximal ideal $m_R$ and $S$ a local $R$-algebra with maximal ideal $m_S$. The following assertions are equivalent: 
\begin{enumerate}
\item $S$ is a localization of a finite $R$-algebra and is flat over $R$, and $S/m_RS =R/m_R=S/m_S$.
\item $S$ is of the form $(R[X]/(F(X)))_{\mathcal N}$ where F(X) is a unitary polynomial of the form
\begin{eqnarray*} &&X^n + a_1X^{n-1} +\cdots +a_{n-1}X - a_n,\\
&&\qquad\text{ where } a_i\in R\text{ for }  1\leq i\leq n\text{ and }  a_n\in m_R,\ a_{n-1}\notin m_R,\end{eqnarray*}
and $\mathcal N$ is the maximal ideal of $R[X]/(F(X))$ containing the class $x$ of $X$ modulo $F(X)$, which is the image of the maximal ideal $(m_R,X)$ of $R[X]$.
\item $S$ is a localization of a finite $R$-algebra, and for every local subalgebra $R_0$ of $R$ essentially of finite type over $\Z$ and containing the coefficients of $F(X)$ the natural map $R_0\to S_0=(R_0[X]/(F(X)))_{\mathcal N_0}$ induces an isomorphism of the completions.
\end{enumerate}
\end{proposition}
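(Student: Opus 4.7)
The plan is to establish the cycle $(2) \Rightarrow (1) \Rightarrow (2)$, then relate these implications to (3) via Hensel's lemma applied over subrings essentially of finite type over $\mathbf Z$.

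First, I would verify $(2)\Rightarrow(1)$ directly. Since $F(X)$ is monic of degree $n$, the algebra $R[X]/(F(X))$ is a free $R$-module of rank $n$; finiteness and flatness pass to the localization $S$. For the residue condition, reducing $F$ modulo $m_R$ and using the Nagata hypothesis $a_n\in m_R$, $a_{n-1}\notin m_R$ gives $\overline{F}(X)=X\cdot\overline{g}(X)$ with $\overline{g}(0)=\overline{a}_{n-1}\ne 0$ in $R/m_R$. Since $X$ and $\overline g$ are coprime, the Chinese remainder theorem yields
\[(R/m_R)[X]/(\overline F)\,\cong\,R/m_R\,\times\,(R/m_R)[X]/(\overline g),\]
and the maximal ideal $\mathcal N$ selects the first factor (as it contains the class of $X$ but not $\overline g$, which is a unit there), so $S/m_RS\cong R/m_R$. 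The relation $x(x^{n-1}+a_1x^{n-2}+\cdots+a_{n-1})=a_n$ in $S$ then forces $x\in m_RS$ (the parenthesized factor is a unit in $S$, its residue being $\overline a_{n-1}\ne 0$), whence $m_S=m_RS$ and $S/m_S=R/m_R$ as well.

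The implication $(1)\Rightarrow(2)$ is the main content and the hardest part. Write $S=T_{\mathfrak n}$ with $T$ a finite $R$-algebra and $\mathfrak n$ a maximal ideal above $m_R$. The residue condition $S/m_RS=R/m_R$ means the local factor of $T/m_RT$ at $\mathfrak n$ is just $R/m_R$, so lifting the residues of generators of $T$ produces a finite $R$-subalgebra $T'\subset S$ generated by elements of $\mathfrak n$. Using the triviality of the residue extension together with flatness, I would extract a single element $x\in m_S$ with $S=R[x]_{(m_R,x)}$; this primitive-element step is the main obstacle, since one must combine several generators into a single one while preserving the localization structure at the distinguished maximal ideal. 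Once $x$ is in hand, integrality over $R$ gives a monic relation $F(x)=0$, and the inclusion $x\in m_S$ forces the constant term, hence $a_n$, to lie in $m_R$. Finally, the equality $\dim_{R/m_R}(S/m_RS)=1$ implies that $0$ is a \emph{simple} root of $\overline F$, so $\overline F'(0)=\overline a_{n-1}\ne 0$, giving the Nagata condition $a_{n-1}\notin m_R$.

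Lastly, $(1)\Leftrightarrow(3)$ is essentially a reformulation via completion, passing through (2). Given (2), pick any $R_0\subset R$ essentially of finite type over $\mathbf Z$ containing $a_1,\ldots,a_n$; since $a_n\in m_R\cap R_0=m_{R_0}$ and $a_{n-1}$ remains a unit in $R_0$, the polynomial $F$ is still Nagata over $R_0$, so $S_0$ is standard \'etale over $R_0$. Hensel's lemma in $\widehat{R_0}$ refines the factorization $\overline F=X\cdot\overline g$ into a factorization $F=(X-\alpha)h(X)$ with $\alpha\in m_{\widehat{R_0}}$ and $h(0)$ a unit; this yields
\[\widehat{R_0}[X]/(F)\,\cong\,\widehat{R_0}\,\times\,\widehat{R_0}[X]/(h),\]
and localization at $\mathcal N_0$ isolates the first factor, giving $\widehat{R_0}\cong\widehat{S_0}$. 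Conversely, an isomorphism $\widehat{R_0}\cong\widehat{S_0}$ forces $F$ to have a simple root in $m_{\widehat{R_0}}$, and descending this via Hensel reconstructs the Nagata shape of $F$, yielding (2) and hence (1).
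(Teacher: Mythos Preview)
The paper does not prove this proposition: it is stated with an attribution to \cite[Proposition 6]{L} and used as a black box, so there is no proof in the paper to compare your attempt against.

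Regarding your sketch itself: the argument for $(2)\Rightarrow(1)$ is correct and standard. For $(1)\Rightarrow(2)$ you correctly identify the primitive-element step as the crux, but your outline does not actually supply it; saying ``I would extract a single element $x\in m_S$ with $S=R[x]_{(m_R,x)}$'' is precisely the content of Lafon's argument and requires real work (this is where the flatness and the residue hypothesis are genuinely exploited, not just mentioned). Your treatment of $(2)\Leftrightarrow(3)$ via Hensel's lemma in $\widehat{R_0}$ is the right idea; note, however, that as stated in the paper condition (3) already refers to ``the coefficients of $F(X)$'', so it implicitly presupposes the presentation in (2), and the equivalence is really $(2)\Leftrightarrow(3)$ rather than an independent $(1)\Leftrightarrow(3)$. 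If you want a complete proof, you should consult Lafon's original paper for the missing $(1)\Rightarrow(2)$ step.
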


Lafon calls such extensions $R\to S$ \textit{Nagata extensions}\footnote{In \cite{N2}, Nagata calls them \textit{quasi-decompositional}.}; they are also called standard \'etale extensions of $R$ or, assuming that $R$ is noetherian, \'etale $R$-algebras quasi-isomorphic to $R$; see also \cite[\S~18]{EGA}. For brevity we shall call polynomials $F(X)$ with coefficients in a local ring satisfying the conditions of Proposition~\ref{standard} \textit{Nagata polynomials}\footnote{In \cite{L-M} they are called "polynomials satisfying the conditions of the implicit function theorem".}. We adopt the convention that the constant term of a Nagata polynomial has a minus sign.\par 

Morphisms of Nagata extensions of $R$ are local morphisms of local $R$-algebras. A morphism from a Nagata extension $S$ to another one $S'$ exists if and only if there is an element $\xi'$ in the maximal ideal of $S'$ such that $F(\xi')=0$. There exists at most one such morphism, determined by sending the image $x\in S$ of $X$ to $\xi'\in S'$ and then, by Proposition~\ref{standard}, $S'$ is a Nagata extension of $S$. Lafon proves that Nagata extensions of $R$ form an inductive system and (\cite[Th\'eor\`eme 2]{L}) that the henselization $R^h$ of $R$ is the inductive limit of its Nagata extensions. In particular it has the same residue field as $R$.\par\noindent
\begin{remark}\label{changeofvariable}
Keeping the notations of Proposition~\ref{standard}, note that if $a_n=0$, then $S$ is isomorphic to $R$. The extension is also trivial when $n=1$. Note also that given any element $\alpha\in m_R$, the polynomial $F_\alpha(X')=F(X'+\alpha)\in R[X']$ with $X'=X-\alpha$ satisfies the same conditions as $F(X)$. Indeed, $F_\alpha(0)=F(\alpha)\in m_R$; and the coefficient of $X'$ in $F_\alpha(X')$ is $F'(\alpha)$, which is not in $m_R$ since $F'(0)$ is not and $\alpha\in m_R$. Moreover, $F_\alpha(X')$ defines the same extension, that is, $S$ is isomorphic to $S_\alpha=(R[X']/(F_\alpha(X')))_\mathcal {N'}$. This implies that the Nagata extension defined by the Nagata polynomial $F(X)$ is trivial if and only if $F(X)$ has a zero in the maximal ideal of $R$.
\end{remark}

As a consequence of the following result, we may assume in the definition of a Nagata extension that the polynomial $F(X)$ is irreducible in $R[X]$. An equivalent statement is found in \cite[Chap.13, Proposition 13.15]{L-M} for the case where $R$ is integrally closed.\par\noindent
\begin{lemma}\label{fac} 
Let $R$ be a local domain and let $F(X)\in R[X]$ be a Nagata polynomial. Let $F(X)=G(X)Q(X)$ be a factorization in $R[X]$, where up to multiplication by a unit of $R$ we write 
\begin{eqnarray*}
G(X)=X^s+\cdots+g_{s-1}X+g_s;\qquad
Q(X)=X^t+\cdots +q_{t-1}X-q_t.
\end{eqnarray*}
Then, one of the two polynomials $G(X),\ Q(X)$ must be a Nagata polynomial. It is the factor whose constant term is in $m_R$. If it is $Q(X)$, then $G(X)\notin (m_R, X)$.
\end{lemma}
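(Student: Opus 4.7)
The plan is to prove the lemma by comparing the coefficient of $X^0$ and of $X^1$ in the identity $F(X)=G(X)Q(X)$, and then exploiting the fact that $m_R$ is a prime ideal of $R$ (since $R$ is a local domain, hence $R/m_R$ is a field).

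First, matching constant terms gives $-a_n = g_s\cdot(-q_t)$, so $g_s q_t = a_n \in m_R$. Since $m_R$ is prime, at least one of $g_s$ and $q_t$ belongs to $m_R$. Next, matching the coefficient of $X$ gives
\[
a_{n-1}=g_s q_{t-1} - g_{s-1} q_t.
\]
If both $g_s$ and $q_t$ were in $m_R$, the right-hand side would lie in $m_R$, contradicting $a_{n-1}\notin m_R$. Therefore exactly one of $g_s$, $q_t$ lies in $m_R$, which will identify the Nagata factor.

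Suppose $g_s\in m_R$ and $q_t\notin m_R$. Since $R$ is local, $q_t$ is a unit of $R$. From $g_s q_{t-1}\in m_R$ and $a_{n-1}\notin m_R$ we deduce $g_{s-1}q_t\notin m_R$, so $g_{s-1}\notin m_R$. Rewriting $G(X)=X^s+\cdots+g_{s-1}X-(-g_s)$ with $-g_s\in m_R$ and $g_{s-1}\notin m_R$, we see that $G(X)$ is a Nagata polynomial. The symmetric argument, applied to the case $q_t\in m_R$, $g_s\notin m_R$, shows that $q_{t-1}\notin m_R$, so that $Q(X)$ is a Nagata polynomial; moreover, the reduction of $G(X)$ modulo $(m_R,X)$ equals $g_s$, which is non-zero in $R/m_R$, so $G(X)\notin(m_R,X)$.

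There is essentially no obstacle in this proof: once one writes down the coefficient comparisons it is a matter of bookkeeping, the only subtle point being to keep track of the sign convention that a Nagata polynomial has the form $\cdots+a_{n-1}X-a_n$ with $a_n\in m_R$, so that it is the factor whose \emph{constant term} lies in $m_R$ that is singled out.
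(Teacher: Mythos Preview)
Your proof is correct and follows essentially the same argument as the paper: compare the constant term and the coefficient of $X$ in $F(X)=G(X)Q(X)$, use that $m_R$ is prime to see exactly one of $g_s,q_t$ lies in $m_R$, and then read off that the corresponding factor is a Nagata polynomial. The only cosmetic difference is that the paper treats explicitly just the case $q_t\in m_R$, $g_s\notin m_R$, while you also spell out the symmetric case.
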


\begin{proof}
Let us consider the linear part of $F(X)$ as it is written in Proposition~\ref{standard}; we have $-a_n=-q_tg_s\in m_R,\ a_{n-1}=g_sq_{t-1}-g_{s-1}q_t\notin m_R$. Since $a_{n-1}\notin m_R$ it is impossible for both $g_s$ and $q_t$ to be in $m_R$, but one of them must be since $m_R$ is prime. Let us say that $q_t\in m_R$ and $g_s\notin m_R$ so that $G(X)\notin (m_R, X)$. Then by the second equality we have $q_{t-1}\notin m_R$ so that $Q(X)$ is a Nagata polynomial.\end{proof}

The next lemma presents Newton's method in the way we are going to use it:\par\noindent
\begin{lemma}\label{app} Let $F(X)=X^n + a_1X^{n-1} +\cdots +a_{n-1}X - a_n\in R[X]$ be a Nagata polynomial and note that as an element of $R[X]$, the polynomial $F(X)$ is the same as \[F^{(1)}(X_1)=F\left(X_1-\frac{F(0)}{F'(0)}\right)=F\left(X_1+\frac{a_n}{a_{n-1}}\right)\]
since $X\mapsto X_1+\frac{a_n}{a_{n-1}}$ is a change of variable in $R[X]$. Write $F^{(1)}(X_1)=X^n + a_1^{(1)}X^{n-1} +\cdots +a_{n-1}^{(1)}X - a_n^{(1)}$. Then we have:
\begin{enumerate}
\item The polynomial $F^{(1)}(X_1)\in R[X_1]$ is a Nagata polynomial.
\item The coefficient $a^{(1)}_i$ is congruent to $a_i$ modulo $\frac{a_n}{a_{n-1}}$.
\item  $F^{(1)}(0)=-a^{(1)}_n\in a_n^2R$.
\item Let $R\to S$ be the Nagata extension defined by $F(X)$. Denoting by $x$, $x_1$ the images in $S$ of $X,X_1$, we have $x_1\in x^2S$. In particular, if $\tilde\nu$ is any semivaluation on $S$ extending the valuation $\nu$ on $R$, the inequality $\tilde\nu (x_1)\geq 2\tilde\nu (x)$ holds.
\end{enumerate}
\end{lemma}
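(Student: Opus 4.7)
The plan is to run all four assertions off a single Taylor expansion of $F$ at $\alpha := a_n/a_{n-1}$, which is a well-defined element of $m_R$ because $a_n\in m_R$ and $a_{n-1}$ is a unit of the local ring $R$. Writing $F(X)=\sum_{j=0}^n a_j' X^{n-j}$ with $a_0'=1$, $a_i'=a_i$ for $1\le i\le n-1$ and $a_n'=-a_n$, Taylor's formula reads
\[
F^{(1)}(X_1)=F(X_1+\alpha)=\sum_{k=0}^n \frac{F^{(k)}(\alpha)}{k!}\,X_1^k,
\qquad
\frac{F^{(n-i)}(\alpha)}{(n-i)!}=\sum_{j=0}^{i} a_j'\binom{n-j}{n-i}\alpha^{i-j}.
\]
The $j=i$ summand contributes exactly $a_i'$, while every other summand is divisible by $\alpha$; after accounting for the sign convention on $a_n^{(1)}$ (which absorbs the minus sign from $a_n'=-a_n$), this yields the congruence $a_i^{(1)}\equiv a_i\pmod{\alpha}$ of assertion~(2). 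Assertion~(1) is then immediate from Remark~\ref{changeofvariable}, or directly from~(2): reducing modulo $m_R$ gives $a_n^{(1)}\in m_R$ and $a_{n-1}^{(1)}\notin m_R$.

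For (3), the same Taylor expansion truncated at order two gives $F(\alpha)=-a_n+a_{n-1}\alpha+\alpha^2 G(\alpha)$ for some $G(X)\in R[X]$. The linear terms cancel by the very definition of $\alpha$, so $-a_n^{(1)}=F^{(1)}(0)=F(\alpha)=\alpha^2 G(\alpha)\in \alpha^2 R\subset a_n^2 R$, the last inclusion because $a_{n-1}$ is a unit. This is the qualitative heart of Newton's method: the choice of $\alpha$ is precisely the one that forces quadratic improvement of the constant term.

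For (4), I would argue inside $S$ using $F(x)=0$ directly. Rewrite the relation as $a_n=x(x^{n-1}+a_1x^{n-2}+\cdots+a_{n-1})=x\bigl(a_{n-1}+xQ(x)\bigr)$ with $Q(x)=x^{n-2}+a_1x^{n-3}+\cdots+a_{n-2}\in R[x]$. Dividing by the unit $a_{n-1}$ and subtracting $\alpha=a_n/a_{n-1}$ gives $x_1=x-\alpha=-x^2Q(x)/a_{n-1}$, so $x_1\in x^2 S$. The inequality $\tilde\nu(x_1)\ge 2\tilde\nu(x)$ then holds because any semivaluation $\tilde\nu$ on $S$ assigns non-negative values in $\Phi\cup\{\infty\}$ to elements of $S$, and $-Q(x)/a_{n-1}\in S$.

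No step presents a real obstacle: the whole lemma is a bookkeeping computation, and the conceptual content is simply that the change of variable $X\mapsto X_1+\alpha$ is chosen to kill the linear term of the Taylor expansion at $\alpha$, which simultaneously preserves the Nagata condition (via the mod-$\alpha$ congruences) and forces the constant term to jump from $O(a_n)$ to $O(a_n^2)$. The only care needed is to track the sign convention $a_n'=-a_n$ versus $-a_n^{(1)}$ so that (2) reads cleanly for all $i$, including $i=n$.
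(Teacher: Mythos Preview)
Your proof is correct and follows the same approach as the paper's own proof, which is extremely terse: the paper says (1) follows from Remark~\ref{changeofvariable}, (2) and (3) are ``a direct computation,'' and (4) follows from the fact that modulo $F(X)$ the element $X_1=X-\frac{a_n}{a_{n-1}}$ is a multiple of $X^2$. You have simply carried out those computations explicitly via the Taylor expansion, and your derivation of $x_1=-x^2Q(x)/a_{n-1}$ in $S$ is exactly the content of the paper's one-line justification for (4).
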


\begin{proof}
The first statement is what was remarked above, and the proof of the next two is a direct computation. The last one follows from the fact that modulo $F(X)$, the element $X_1=X-\frac{a_n}{a_{n-1}}$ is a multiple of $X^2$.
\end{proof}

As a consequence, starting from a Nagata polynomial $F(X)\in R[X]$, we can iterate the construction just described to produce:
\begin{itemize}
\item A sequence of generators $X_i:=X_{i-1}+\frac{F^{(i-1)}(0)}{(F^{(i-1)})'(0)}$ for the polynomial ring $R[X]$, with $X_0=X$.
\item Polynomials $F^{(i)}(X_i):=F^{(i-1)}\left(X_i-\frac{F^{(i-1)}(0)}{(F^{(i-1)})'(0)}\right)\in R[X_i]$, with $F^{(0)}(X)=F(X)$. 
\end{itemize}

\begin{definition}\label{def:sequences}
Let $\nu$ be a valuation centered in a local domain $R$ and let $F(X)\in R[X]$ be a Nagata polynomial. Keep the previous notations. We define the following elements of $m_R$: 
\begin{equation*}
\begin{aligned}
\delta_k&:=\frac{a_n^{(k)}}{a_{n-1}^{(k)}}=-\frac{F^{(k)}(0)}{(F^{(k)})'(0)},\text{ for }k\geq0.\\
\sigma_i&:=\sum_{k=0}^{i-1}\delta_k,\text{ for }i\geq1.
\end{aligned}
\end{equation*}
We say that $(\delta_i)_{i\in\N}$ and $(\sigma_i)_{i\geq1}$ are the \emph{Newton sequence of values} and the \emph{sequence of partial sums }attached to $F(X)$, respectively.
\end{definition}

The polynomials $(F^{(i)}(X_i))_{i\in \N}$ all define the same Nagata extension of $R$. If at some step $i\geq0$ we find $F^{(i)}(0)=0$, this implies that $F(X)$ defines a trivial extension, so we may assume that this does not happen and we shall do so.\par 

By construction, we have $X=X_i+\sigma_i$ and $x_{i+1}=x_i-\delta_i$. We verify by induction that 
$F^{(i)}(X_i)=F\left(X_i+\sigma_i\right)$ for $i\geq1$. Setting $X_i=0$ in this identity, we can read the definition of $\delta_i$ as given by the equality $F'(\sigma_i)\delta_i=-F(\sigma_i)$. Observe that for all $i\geq1$, $F(\sigma_i)\neq0$ because the Nagata extension is not trivial, and $\nu(\delta_i)=\nu(F(\sigma_i))$.\par\noindent 
\begin{remark}\label{hensel} Assuming for a moment that $R$ is complete and separated for the $m_R$-adic topology, Lemma~\ref{app} tells us in particular that the images in $S$ of the elements $X_i$ converge to $x_\infty=0$ while the polynomials $F^{(i)}(X_i)$ converge to a polynomial $F^{(\infty)}(X_\infty)$ without constant term because $a^{(i)}_n\in m_R^{2^{i-1}}$. Therefore $x_\infty$ is a root of $F^{(\infty)}(X_\infty)$, which is simple since $a^{(\infty)}_{n-1}\notin m_R$. Since $x_\infty=x-\sum_{k=0}^\infty\delta_k$ and $F^{(\infty)}(X_\infty)=F(X)$ this tells us that $\sum_{k=0}^\infty\delta_k$ is a simple root of $F(X)$, which is contained in the maximal ideal $m_R$ of $R$. Since our assumption on $F(X)$ is equivalent to the statement that the image of $F(X)$ in $k[X]$, where $k=R/m_R$,  has $0$ as a simple root, this is indeed a version of Hensel's lemma.\end{remark}

We stress the fact that by our assumption that the Nagata extension is not trivial we have $\delta_0\in m_R\setminus\{0\}$ and $\delta_{i+1}$ is a non zero multiple of $\delta_i^2$ for any $i\geq0$, so that we expect to have a root of $F(X)$ which is represented as a sum $\sum_{k=0}^\infty\delta_k$ of elements of strictly increasing valuations.\par

In general, the $m_R$-adic topology may not be separated, in fact one can have $\bigcap_{n\in\N}m_R^n=m_R$, but the partial sums $(\sigma_i)_{i\geq1}$ form a pseudo--Cauchy, or pseudo--convergent sequence in the sense of Ostrowski \cite[Teil III, \S~11]{O} for the valuation $\nu$; see also \cite[\S~2]{K} and \cite[Chapter 8]{Ku3}. We refer to these texts for the following definitions and facts:\par

A \emph{pseudo--convergent} sequence of elements of a field $K$ endowed with a valuation $\nu$ is a family $(y_\tau)_{\tau\in T}$ of elements of $K$ indexed by a well ordered set $T$ without last element, which satisfies the condition that whenever $\tau<\tau'<\tau"$ we have $\nu(y_{\tau'}-y_\tau)<\nu(y_{\tau"}-y_{\tau'})$. \par

An element $y\in K$ is said to be a \emph{pseudo--limit}, or simply \emph{limit} of this pseudo--convergent sequence if $\nu(y_{\tau'}-y_\tau)\leq \nu(y-y_\tau)$ for $\tau ,\tau'\in T,\ \tau<\tau'$. One observes that if $(y_\tau)_{\tau\in T}$ is pseudo--convergent, then for each $\tau\in T$ the value $\nu(y_{\tau'}-y_\tau)$ is independent of $\tau'>\tau$ and can be denoted by $\nu_\tau$. Moreover, given $y\in K$, either $\nu(y-y_{\tau'})>\nu (y-y_\tau)$ whenever $\tau'>\tau$ (in which case $y$ is a limit), or there exists $\tau_0\in T$ such that $\nu(y-y_{\tau'})=\nu (y-y_\tau)$ for $\tau'>\tau>\tau_0$. In other words, the sequence $(\nu(y-y_{\tau}))_{\tau\in T}$ is either strictly increasing or \emph{eventually constant}. Taking $y=0$ we see that either $\nu(y_{\tau'})>\nu(y_\tau)$ whenever $\tau'>\tau$ or there exists $\tau_0\in T$ such that $\nu(y_{\tau'})=\nu (y_\tau)$ for $\tau'>\tau>\tau_0$. Finally, if $y$ and $z$ are two limits of $(y_\tau)_{\tau\in T}$, we have that for all $\tau\in T$, $\nu(y-z)> \nu_\tau$ since $T$ has no last element.\par 

In this paper, we mostly apply a variant of Ostrowski's method to this particular pseudo--convergent sequence and for a different purpose. From now on we fix an algebraic closure $\widebar K$ of $K$ and recall that valuations of $K$ extend to $\overline K$ (see \cite[Ch.~VI, \S~1, no. 3, Theorem 3]{B}).\par\noindent
\begin{proposition}\label{propanddef}
Let $F(X)\in R[X]$ be a Nagata polynomial. Given an extension $\tilde\nu$ of $\nu$ to $\widebar K$, there exists a unique root of $F(X)$ in $\widebar K$ with positive $\tilde\nu$-value. If we call $\si$ this root of $F(X)$, then the following also holds:
\begin{enumerate}
\item $\si$ is a limit of the pseudo--convergent sequence $(\sigma_i)_{i\geq1}$ associated to $F(X)$.
\item\label{zero} For any $z\in\widebar K\setminus\set{\si}$ such that $F(z)=0$ we have $\tilde\nu(z)=0$.
\item $\si$ is a simple root of $F(X)$.
\end{enumerate}
\end{proposition}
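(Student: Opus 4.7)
The plan is to factor $F(X)=\prod_{j=1}^n(X-z_j)$ in $\widebar K[X]$ and to use the two conditions defining a Nagata polynomial, $a_n\in m_R$ and $a_{n-1}\notin m_R$, to control the $\tilde\nu$-values of the roots and of $F'$. For existence, the equality $-a_n=F(0)=(-1)^n\prod_j z_j$ together with $\tilde\nu(a_n)=\nu(a_n)>0$ yields $\sum_j\tilde\nu(z_j)>0$, so at least one root lies in the maximal ideal $m_{\tilde\nu}$ of $R_{\tilde\nu}$.

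The central observation, from which everything else follows, is that for \emph{any} $\xi\in\widebar K$ with $\tilde\nu(\xi)>0$ one has $\tilde\nu(F'(\xi))=0$. Indeed, $F'(\xi)-a_{n-1}$ is a polynomial expression in $\xi$ with coefficients in $R\subseteq R_{\tilde\nu}$ and no constant term, hence has strictly positive $\tilde\nu$-value, while $\tilde\nu(a_{n-1})=\nu(a_{n-1})=0$. Granted this, uniqueness of $\si$ and statement~\eqref{zero} follow together: if two distinct roots $z_1,z_2$ had positive $\tilde\nu$-value, then in $F'(z_1)=\prod_{k\neq1}(z_1-z_k)$ the factor $z_1-z_2$ would contribute strictly positive value while every other factor lies in $R_{\tilde\nu}$ (the $z_k$ being integral over $R$), giving $\tilde\nu(F'(z_1))>0$, a contradiction. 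Combined with the existence step, this shows that exactly one root $\si$ has positive value, the remaining ones lying in $R_{\tilde\nu}\setminus m_{\tilde\nu}$ and so having value zero. Simplicity is immediate: a multiple root would give $F'(\si)=0$, again contradicting $\tilde\nu(F'(\si))=0$.

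For the limit property~(1) I would use the factorization $F(X)=(X-\si)Q(X)$, where $Q(X)=\prod_{j\geq2}(X-z_j)$ has only roots of $\tilde\nu$-value zero. Since $\sigma_i\in m_R$, each factor $\sigma_i-z_j$ with $j\geq2$ has $\tilde\nu$-value zero, so $\tilde\nu(Q(\sigma_i))=0$ and hence $\tilde\nu(F(\sigma_i))=\tilde\nu(\si-\sigma_i)$. Combining this with the Newton identity $F'(\sigma_i)\delta_i=-F(\sigma_i)$ recorded after Definition~\ref{def:sequences}, and with the central observation applied to $\sigma_i$ to get $\tilde\nu(F'(\sigma_i))=0$, I obtain $\tilde\nu(\delta_i)=\tilde\nu(\si-\sigma_i)$. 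Since the pseudo-convergence of $(\sigma_i)_{i\geq1}$ gives $\tilde\nu(\sigma_j-\sigma_i)=\tilde\nu(\delta_i)$ for all $j>i$, this is exactly the defining inequality $\tilde\nu(\sigma_j-\sigma_i)\leq\tilde\nu(\si-\sigma_i)$ for $\si$ to be a pseudo-limit of $(\sigma_i)_{i\geq1}$.

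The step I expect to be most delicate is~(1), both because it depends on reading off $\tilde\nu(F(\sigma_i))$ from the factorization — which presupposes the already established control of the other roots — and because it reveals the key mechanism at play in this paper: the Newton step $\delta_i$ is precisely calibrated to measure the valuative distance from $\sigma_i$ to the distinguished root $\si$.
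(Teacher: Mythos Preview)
Your proof is correct and takes a genuinely different route from the paper's. The paper argues in the opposite order: it first establishes (1) by observing that if no root $r_j$ were a pseudo-limit of $(\sigma_i)$ then every $\tilde\nu(\sigma_i-r_j)$ would be eventually constant, forcing $\nu(F(\sigma_i))=\nu(\delta_i)$ to be eventually constant, which it is not. Having found a limit root $r_1$, the paper then deduces the value distribution $\tilde\nu(r_1)=\nu(\delta_0)$, $\tilde\nu(r_j)=0$ for $j\neq1$ from the chain $\nu(\sigma_i)=\nu(\delta_0)=\nu(F(0))=\sum_j\tilde\nu(r_j)$ and a short contradiction; (2), uniqueness and (3) fall out of this. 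Your approach instead exploits the Nagata condition $a_{n-1}\notin m_R$ directly through the derivative, via the clean observation $\tilde\nu(F'(\xi))=0$ for any $\xi\in m_{\tilde\nu}$; this yields existence, uniqueness, (2) and (3) without invoking pseudo-convergence at all, and only then you read off (1) from the factorization $F=(X-\si)Q$ and the Newton identity. Your argument is arguably more elementary and makes the role of $F'$ very transparent; the paper's argument is more in the Ostrowski--Kaplansky spirit that pervades the rest of the article, treating (1) as the primary fact from which the algebraic structure of the roots is read off.
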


\begin{proof}
Write $F(X)=\prod_{j=1}^n{X-r_j}$ in $\widebar K[X]$. For all $i\geq1$, we have $\nu(F(\sigma_i))=\sum_{j=1}^n{\tilde\nu(\sigma_i-r_j)}$. Hence if none of the $r_j$ is a limit of the pseudo--convergent sequence $(\sigma_i)_{i\geq1}$ then $(\nu(F(\sigma_i)))_{i\geq1}$ is eventually constant. However $\nu(F(\sigma_i))=\nu(\delta_i)$ for all $i\geq1$, so we can assume that $r_1$ is a limit of $(\sigma_i)_{i\geq1}$. In particular, $\tilde\nu(\sigma_i-r_1)=\nu(\sigma_{i+1}-\sigma_i)=\nu(\delta_i)$ for all $i\geq1$.

For $1\leq j\leq n$, we have $\tilde\nu(r_j)\geq0$ because $r_j$ is integral over $R$ (see \cite[Ch.~VI, \S~1, no.3, Theorem 3]{B}). In addition, $\nu(\sigma_i)=\nu(\delta_0)=\nu(F(0))=\sum_{j=1}^n{\tilde\nu(r_j)}$ for all $i\geq1$. If $\nu(\sigma_i)>\tilde\nu(r_1)$ for some $i$, we obtain $\nu(\delta_i)=\tilde\nu(\sigma_i-r_1)=\tilde\nu(r_1)<\nu(\delta_0)$, which gives us a contradiction. We conclude that $\tilde\nu(r_j)=0$ if $j\neq1$ and $\tilde\nu(r_1)=\nu(\delta_0)>0$.\end{proof}

In what follows, we keep the notation introduced in Proposition~\ref{propanddef} and an extension $\tilde\nu$ of $\nu$ to $\widebar K$ comes with a distinguished root $\si\in\widebar K$ of $F(X)$ satisfying $\tilde\nu(\si)=\nu(\delta_0)>0$. In the notation we will omit the dependence of $\si$ on $\tilde\nu$. Two extensions of $\nu$ may choose different roots of $F(X)$ in $\widebar K$, however their minimal polynomials over $K$ coincide in view of the following:\par\noindent   
\begin{corollary}\label{minpol}
In the situation of Proposition~\ref{propanddef}, denote by $F^*(X)$ the minimal polynomial of $\si$ over $K$. Then $F^*(X)$ is the only irreducible factor of $F(X)$ in $K[X]$ such that the $\nu$-value of its independent term is positive.
\end{corollary}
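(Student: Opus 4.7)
The plan is to read off the corollary from Proposition~\ref{propanddef} by comparing, for each irreducible factor of $F(X)$ in $K[X]$, the $\nu$--value of its constant term with the sum of the $\tilde\nu$--values of its roots in $\widebar K$.

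More precisely, since $F(X)$ is monic, I would first factor it in $K[X]$ as a product of monic irreducible polynomials $F(X)=\prod_{j} G_j(X)$. The minimal polynomial $F^*(X)$ of $\si$ over $K$ is (up to the usual normalisation) one of these $G_j$'s, since $F^*(X)$ divides $F(X)$ in $K[X]$ and is irreducible. For each factor $G_j(X)$, write $G_j(X)=\prod_{k}(X-r_{jk})$ in $\widebar K[X]$, so that the constant term of $G_j(X)$ equals $(-1)^{\deg G_j}\prod_k r_{jk}$, and thus its $\nu$--value (which coincides with $\tilde\nu$ on $K$) is $\sum_k \tilde\nu(r_{jk})$.

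Now I would invoke Proposition~\ref{propanddef}.\eqref{zero}: among the roots of $F(X)$ in $\widebar K$, only $\si$ has positive $\tilde\nu$--value, while all the others have $\tilde\nu$--value $0$. For the factor $F^*(X)$, which has $\si$ among its roots, this gives that the $\nu$--value of its constant term is $\tilde\nu(\si)=\nu(\delta_0)>0$. For any other irreducible factor $G_j(X)\neq F^*(X)$, none of its roots is $\si$ (since $F^*(X)$ is the minimal polynomial of $\si$ and the $G_j$'s are pairwise coprime irreducible factors), so all roots of $G_j(X)$ have $\tilde\nu$--value $0$, and consequently the $\nu$--value of its constant term is $0$. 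This proves both existence (for $F^*$) and uniqueness among the irreducible factors.

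I do not expect any serious obstacle: the two ingredients are exactly the identity $\nu(\text{constant term})=\sum \tilde\nu(\text{roots})$ and the dichotomy established in Proposition~\ref{propanddef}.\eqref{zero}. The only small point to keep in mind is the normalisation of the factors: working with monic irreducibles in $K[X]$ makes the formula for the constant term clean and identifies $F^*(X)$ with the unique factor containing $\si$ as a root, thereby linking ``having a root of positive $\tilde\nu$--value'' with ``being $F^*(X)$''.
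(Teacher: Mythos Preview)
Your proof is correct and follows essentially the same approach as the paper: both use that for a monic polynomial the $\nu$-value of the constant term equals the sum of the $\tilde\nu$-values of its roots in $\widebar K$, and then appeal to Proposition~\ref{propanddef}.\eqref{zero} (together with $\tilde\nu(\si)>0$) to single out $F^*(X)$ among the irreducible factors. The paper's proof is just a one-line version of yours.
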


\begin{proof}
The $\nu$-value of the independent term of a polynomial in $K[X]$ is the sum of the $\tilde\nu$-values of all its roots in $\widebar K$. Use that $\tilde\nu(\si)>0$ and Proposition~\ref{propanddef}.\eqref{zero} to prove the statement.\end{proof}

Throughout this section, we denote by $\widebar R$ the integral closure of $R$ in $K$. Recall that $\widebar R\subseteq R_\nu$. The localization $\tilde R=\widebar R_{m_\nu\cap\widebar R}$ is an integrally closed local domain dominating $R$ and dominated by $R_\nu$, which is uniquely determined by $\nu$. Note that a Nagata polynomial in $R[X]$ can also be regarded as a Nagata polynomial in $\tilde R[X]$.\par\noindent
\begin{lemma}\label{intclosure}
Keeping the same notation, we have the following:
\begin{enumerate}
\item\label{enum:defminpol} The coefficients of $F^*(X)$ belong to $\widebar R$.
\item\label{enum:stilde} The polynomials $F(X)$ and $F^*(X)$ are Nagata polynomials in $\tilde R[X]$ and they define the same Nagata extension of $\tilde R$.
\end{enumerate}
\end{lemma}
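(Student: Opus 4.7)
My plan for (1) is very short: every root of $F^*(X)$ is a Galois conjugate of $\si$ over $K$ and therefore a root of $F(X)\in R[X]$, hence integral over $R$. The coefficients of $F^*$ are elementary symmetric functions in these roots, so they are integral over $R$ and lie in $K$, which places them in $\widebar R$.

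For (2), I would split the work into three pieces. First, checking that $F(X)$ is a Nagata polynomial in $\tilde R[X]$ is immediate from the fact that $\tilde R$ dominates $R$: the condition $a_n\in m_R$ yields $a_n\in m_{\tilde R}$, while $a_{n-1}\notin m_R$ means $a_{n-1}$ is a unit in $R$, hence a unit in $\tilde R$. Second, to show that $F^*(X)$ is a Nagata polynomial in $\tilde R[X]$ (its coefficients already lie in $\widebar R\subset\tilde R$ by (1)), I would compute the $\tilde\nu$-values of its constant and linear coefficients via Vieta's formulas, leveraging Proposition~\ref{propanddef}.\eqref{zero}: listing the roots of $F^*$ as $\si=\si_1,\si_2,\ldots,\si_s$, we have $\tilde\nu(\si_1)>0$ and $\tilde\nu(\si_j)=0$ for $j\geq 2$. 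Writing $F^*(X)=X^s+b_1X^{s-1}+\cdots+b_{s-1}X-b_s$, the constant term $\pm b_s=\si_1\cdots\si_s$ has $\tilde\nu$-value $\tilde\nu(\si_1)>0$, so $b_s\in m_\nu\cap\tilde R=m_{\tilde R}$; whereas $\pm b_{s-1}=\sum_{j=1}^s\prod_{k\neq j}\si_k$ is a sum in which precisely one summand, $\prod_{k\neq 1}\si_k$, has value zero while the others have value $\tilde\nu(\si_1)>0$, forcing $\tilde\nu(b_{s-1})=0$ and therefore $b_{s-1}\notin m_{\tilde R}$.

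Third, to see that $F$ and $F^*$ define the same Nagata extension of $\tilde R$, I would perform polynomial division in $\tilde R[X]$: since $F^*$ is monic, $F(X)=F^*(X)Q(X)$ with $Q(X)\in\tilde R[X]$ (uniqueness of the quotient in $K[X]$ ensures the computed remainder vanishes). Lemma~\ref{fac} applied to this factorization in $\tilde R[X]$ then forces $Q(0)\notin m_{\tilde R}$, because $F^*$ already carries the Nagata data. It remains to check that the obvious surjection $\tilde R[X]/(F)\to \tilde R[X]/(F^*)$ and the map sending $X\mapsto x$ (well defined from $\tilde R[X]/(F^*)$ into the localization $(\tilde R[X]/(F))_{(m_{\tilde R},x)}$, since $Q(x)$ is a unit there by the residue-field computation, so $F^*(x)=0$ in the localization) become mutually inverse local isomorphisms after localizing at the respective maximal ideals.

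The only delicate point is the symmetric-function bookkeeping in the second step; once Proposition~\ref{propanddef}.\eqref{zero} is in hand, the rest is formal, and Lemma~\ref{fac} handles the identification of extensions almost automatically.
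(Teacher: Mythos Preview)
Your proof is correct and follows essentially the same overall arc as the paper's, but there is one economy you miss in part (2). The paper does not verify the linear coefficient $b_{s-1}$ of $F^*$ by Vieta's formulas. Instead, it observes only that $F^*(0)\in m_{\tilde R}$ (your Corollary~\ref{minpol} argument, or equivalently the Vieta computation for $b_s$), and then applies Lemma~\ref{fac} directly to the factorization $F(X)=G(X)F^*(X)$ in $\tilde R[X]$: since $F$ is Nagata and the constant term of $F^*$ is in $m_{\tilde R}$, Lemma~\ref{fac} forces $F^*$ to be the Nagata factor, giving $b_{s-1}\notin m_{\tilde R}$ for free together with $G(0)\notin m_{\tilde R}$. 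In other words, the information you extract from Proposition~\ref{propanddef}.\eqref{zero} via the symmetric-function computation for $b_{s-1}$ is already packaged inside Lemma~\ref{fac}; your second and third steps can be merged into a single invocation of that lemma. Your more hands-on route is perfectly valid and perhaps more transparent, but it duplicates work that Lemma~\ref{fac} was designed to absorb.
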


\begin{proof}
Since $\si$ is integral over $R$, its minimal polynomial over $K$ belongs to $\widebar R[X]$ (see \cite[Ch.~5, \S~1, no.3, Corollary]{B}). Next we prove the second statement.

By Corollary~\ref{minpol}, $F^*(0)\in m_\nu$. Hence the constant term of $F^*(X)\in\widebar R[X]\subseteq\tilde R[X]$ is in the maximal ideal of $\tilde R$. The result follows from Lemma~\ref{fac} applied to $\tilde R$ and $F(X)$. The natural epimorphism $\tilde R[X]/(F(X))\to\tilde R[X]/(F^*(X))$ induces an isomorphism of $R$--algebras from the Nagata extension of $\tilde R$ defined by $F(X)$ to that defined by $F^*(X)$.\end{proof}

After what we have just seen, the valuation $\nu$ determines an irreducible factor $F^*(X)\in\widebar R[X]$ of $F(X)$ in $K[X]$ with $\nu(F^*(0))=\nu(\delta_0)>0$. Denote by $K^*$ the field $K[X]/(F^*(X))$. Then the natural homomorphism $R[X]/(F(X))\to K^*$ induces a homomorphism of $R$-algebras 
\[E_S(\nu)\colon S=(R[X]/(F(X)))_{\mathcal N}\longrightarrow K^*.\]
Indeed, if $F^*(X)$ divides $P(X)\in R[X]$ in $\widebar R[X]$ (or, equivalently, in $K[X]$) then $P(0)=c\:F^*(0)$ for some $c\in\widebar R$ and, since $\nu$ is centered in $R$ and $F^*(0)\in m_\nu$, we have $P(0)\in m_R$.\par\noindent
\begin{definition}
Let $\nu$ be a valuation centered in a local domain $R$ and let $S$ be a Nagata extension of $R$ defined by a Nagata polynomial $F(X)\in R[X]$. We call $H_S(\nu)$ the kernel of the homomorphism $E_S(\nu)$.
\end{definition}

\begin{remark}\label{firstsimpl} 
Let us go back to Lemma~\ref{intclosure}. If $R\to S$ and $\tilde R\to\tilde S$ are the Nagata extensions defined by $F(X)$, then we have a commutative diagram 
\begin{equation} \tag{$\text{D}_1$}
\begin{gathered}
\xymatrixcolsep{3pc}
\xymatrix@R-1pc{
R \ar[d] \ar[r] & S \ar[d] \ar[r]^-{E_S(\nu)}& K^*\\
\tilde R \ar[r] & \tilde S \ar[ru]_-{E_{\tilde S}(\nu)} }
\end{gathered}
\label{diagram1}
\end{equation}
where $S\to\tilde S$ is the local ring homomorphism induced by the natural map from $R[X]/(F(X))$ to $\tilde R[X]/(F(X))$. By Lemma~\ref{intclosure}.\eqref{enum:stilde}, $\tilde S$ is also defined by $F^*(X)$. As a consequence, $H_{\tilde S}(\nu)$ is the zero ideal and $H_S(\nu)$ is the kernel of $S\to\tilde S$. In the case where $R$ is integrally closed, we get that $E_S(\nu)$ is injective and $S$ is a local domain, and therefore $R^h$ is also a local domain.
\end{remark}

Observe that the ideal $H_S(\nu)$ depends only on the valuation $\nu$. It has the following properties:\par\noindent
\begin{lemma}\label{kernel}Let $\nu$ be a valuation centered in a local domain $R$. Then:
\begin{enumerate}
\item\label{enum:ker} For any Nagata extension $R\to S$, the ideal $H_S(\nu)$ of $S$ is a minimal prime.
\item\label{enum:compatible} Given a map $f\colon S\to S'$ of Nagata extensions of $R$, we have $f^{-1}(H_{S'}(\nu))=H_S(\nu)$.
\end{enumerate}
\end{lemma}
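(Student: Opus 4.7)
For part (1), the plan is to exhibit $H_S(\nu)$ as the extension to $S$ of a minimal prime of $R[X]/(F(X))$. Since $F(X)$ is monic, $R[X]/(F(X))$ is a free $R$-module, hence flat over the domain $R$, and by going-down every minimal prime contracts to $(0) \subset R$. Such primes correspond bijectively to those of $K[X]/(F(X))$, i.e.\ to the irreducible factors of $F(X)$ in $K[X]$; in particular $F^*(X)$ gives a minimal prime $\mathfrak{p}^*$ of $R[X]/(F(X))$, and by the construction of $E_S(\nu)$ the contraction of $H_S(\nu)$ to $R[X]/(F(X))$ coincides with $\mathfrak{p}^*$. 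The remaining point is that $\mathfrak{p}^*$ survives in the localization at $\mathcal{N}$: if $p(X) \in R[X]$ represents an element of $\mathfrak{p}^*$, then $F^*(X) \mid p(X)$ in $K[X]$, so $p(0) = F^*(0)\, c$ for some $c \in K$; since $\nu(F^*(0)) > 0$ by Corollary~\ref{minpol} we obtain $\nu(p(0)) > 0$, and because $\nu$ is centered in $R$ this yields $p(0) \in m_R$, so $p \in (m_R, X)$ as required.

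For part (2), I fix a single extension $\tilde\nu$ of $\nu$ to $\widebar K$ and use it to define both $E_S(\nu)$ and $E_{S'}(\nu)$. Writing $F(X), F'(X) \in R[X]$ for the Nagata polynomials defining $S$ and $S'$, let $\si \in \widebar K$ be the distinguished root of $F(X)$ provided by Proposition~\ref{propanddef} and denote by $\si'$ the analogous root of $F'(X)$; realize $K^* = K(\si) \subseteq \widebar K$ and the analogous field $L = K(\si') \subseteq \widebar K$ for $S'$. The claim reduces to showing that $E_{S'}(\nu) \circ f$ factors through $E_S(\nu)$. By the discussion following Proposition~\ref{standard}, $f$ is determined by an element $\xi' \in m_{S'}$ satisfying $F(\xi') = 0$; hence $E_{S'}(\nu)(\xi') \in L$ is a root of $F(X)$ in $\widebar K$. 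Moreover, the image of $S'$ in $L$ is integral over $R$, so it lies in $\widebar R \subseteq R_{\tilde\nu}$, and the image of $m_{S'}$ lies in $m_{\tilde\nu} \cap \widebar R$; therefore $\tilde\nu(E_{S'}(\nu)(\xi')) > 0$. The uniqueness statement of Proposition~\ref{propanddef} applied to $F(X)$ then forces $E_{S'}(\nu)(\xi') = \si$, so $E_{S'}(\nu) \circ f$ takes values in $K(\si) = K^*$ and agrees there with $E_S(\nu)$; passing to kernels yields $f^{-1}(H_{S'}(\nu)) = H_S(\nu)$.

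I expect the main obstacle to be the check in part (1) that $\mathfrak{p}^* \subseteq \mathcal{N}$: the flatness argument produces $\mathfrak{p}^*$ cleanly as a minimal prime of $R[X]/(F(X))$, but its survival in the localization genuinely uses both Corollary~\ref{minpol} and the hypothesis that $\nu$ is centered in $R$. Part (2) is essentially a naturality check once $\tilde\nu$ is fixed and the distinguished roots are realized consistently inside $\widebar K$.
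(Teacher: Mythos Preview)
Your overall strategy is sound and the conclusions are correct, but there is one inaccurate step in part~(2) that you should fix. You write that ``the image of $S'$ in $L$ is integral over $R$, so it lies in $\widebar R \subseteq R_{\tilde\nu}$.'' This is not right: the image of $S'$ under $\pi_{\si'}\circ E_{S'}(\nu)$ is the localization $R[\si']_*$, which is not integral over $R$, and in any case $\widebar R$ denotes the integral closure of $R$ in $K$, not in $\widebar K$. The conclusion you want, namely $\tilde\nu\bigl(E_{S'}(\nu)(\xi')\bigr)>0$, is nonetheless correct: the map $S'\to R[\si']_*$ is a surjection of local rings (its kernel $H_{S'}(\nu)$ is contained in $m_{S'}$), and the paper already records, just before Proposition~\ref{uniqextension}, that the restriction of $\tilde\nu$ to $K(\si')$ is centered in $R[\si']_*$. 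Hence $\xi'\in m_{S'}$ lands in the maximal ideal of $R[\si']_*$, which is contained in $m_{\tilde\nu}$. With this correction your argument goes through.

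As for comparison with the paper: in part~(1) the paper simply observes that $H_S(\nu)\cap R=(0)$ and invokes incomparability for the integral extension $R\to R[X]/(F(X))$; you instead identify $H_S(\nu)$ explicitly as the extension of the minimal prime corresponding to $F^*(X)$ via flatness and the correspondence with primes of $K[X]/(F(X))$, which is the content of Remark~\ref{rem:minprimes}(1). In part~(2) the paper passes through the integrally closed ring $\tilde R$ and the diagram~\eqref{diagram1}, using that $H_S(\nu)=\ker(S\to\tilde S)$; you work directly inside $\widebar K$ via a fixed extension $\tilde\nu$ and the distinguished roots. Your route is more direct and avoids the detour through $\tilde R$; the paper's route has the advantage of making the compatibility manifest as a commutative diagram and of reusing machinery already set up in Remark~\ref{firstsimpl}.
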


\begin{proof}Let $F(X)\in R[X]$ be a Nagata polynomial defining the extension $R\to S$. Let $\mathfrak p$ be a prime ideal of $S$ such that $\mathfrak p\cap R=(0)$. Then $\mathfrak p$ is the extension of a prime ideal of $R[X]/(F(X))$ that is contained in the maximal ideal $\mathcal N$ and has intersection $(0)$ with $R$. Since $R\to R[X]/(F(X))$ is an integral extension, we have by the incomparability property that $\mathfrak p$ is a minimal prime of $S$. To prove \eqref{enum:ker}, take $\mathfrak p=H_S(\nu)$.

As we saw, a morphism $f\colon S\to S'$ of Nagata extensions is determined by an element $\xi'\in m_{S'}$ such that $F(\xi')=0$. Then, with the notations of Remark~\ref{firstsimpl}, the image $\tilde{\xi'}$ of $\xi'$ under the local ring homomorphism $S'\to\tilde{S'}$ determines a map $\tilde f:\tilde S\to\tilde{S'}$ of Nagata extensions of $\tilde R$. We see that mapping the image of $X$ in $K^*$ to the image of $\tilde{\xi'}$ in the field $K'^*$ uniquely defines a map $K^*\to K'^*$ of extensions of $K$, which has to be injective and and makes the following diagram commute:
\begin{equation*} 
\begin{gathered}
\xymatrixcolsep{3pc}
\xymatrix@R-1pc{
S \ar[d]_-{f}\ar[r] & \tilde S \ar[d]_-{\tilde f}\ar@{^{(}->}[r]^-{E_{\tilde S}(\nu)} & K^*\ar[d] \\
S' \ar[r] & \tilde{S'} \ar@{^{(}->}[r]^-{E_{\tilde{S'}}(\nu)} & K'^* }
\end{gathered}
\end{equation*}
Statement $(2)$ now follows.\end{proof}

\begin{Remark}\label{rem:minprimes} 
\begin{enumerate}[wide,topsep=0mm,partopsep=0mm,parsep=0mm,itemsep=0.1mm,labelindent=0pt,leftmargin=0pt]
\item By a direct computation one can check that $H_S(\nu)$ is the extension with respect to the canonical localization homomorphism $R[X]/(F(X))\to S$ of the prime ideal consisting of the residue classes of the polynomials in $R[X]$ that are divisible by $F^*(X)$ in $K[X]$.
\item\label{enum:flat} As we have seen in the proof of Lemma~\ref{kernel}.\eqref{enum:ker}, if $R\to S$ is a Nagata extension, then any prime ideal of $S$ lying over the zero ideal of $R$ is a minimal prime. Conversely, if $\mathfrak p$ is a minimal prime ideal of $S$ then $\mathfrak p\cap R=(0)$. This follows from the fact that $R\to S$ is flat by using that going--down property holds for flat extensions, see \cite[(5.D) Theorem 4]{Mat}. 
\end{enumerate}
\end{Remark}

Let us prove that the valuation $\nu$ uniquely determines the support of the semivaluation which extends it to the henselization:

\begin{proposition}\label{uniqminprime}
Let $\nu$ be a valuation centered in a local domain $R$ and let $R\to S$ be a Nagata extension. If $\mathfrak p$ is a prime ideal of $S$ such that $\mathfrak p\cap R=(0)$ and $\nu$ extends to a valuation centered in $S/\mathfrak p$ through the inclusion $R\subset S/\mathfrak p$, then $\mathfrak p=H_S(\nu)$.
\end{proposition}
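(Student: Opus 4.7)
The plan is to recognize both $\mathfrak{p}$ and $H_S(\nu)$ as the kernel of a single $R$-algebra homomorphism from $S$ into $\widebar K$. First I would set $L:=\mathrm{Frac}(S/\mathfrak{p})$ and let $\xi\in S/\mathfrak{p}$ be the image of the class $x$ of $X$. Since $S/\mathfrak{p}$ is a localization of the subring $R[\xi]$, its fraction field is $L=K(\xi)$, which is algebraic over $K$ because $F(\xi)=0$. Moreover $x\in\mathcal N$ forces $\xi\in m_{S/\mathfrak{p}}$, and the hypothesis that $\tilde\nu$ is centered in $S/\mathfrak{p}$ then gives $\tilde\nu(\xi)>0$.

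Next I would fix a $K$-embedding $L\hookrightarrow\widebar K$, transport $\tilde\nu$ along it, and extend the resulting valuation on the image of $L$ to a valuation $\mu$ on $\widebar K$; this $\mu$ is an extension of $\nu$. The image $\xi_0\in\widebar K$ of $\xi$ then satisfies $F(\xi_0)=0$ and $\mu(\xi_0)=\tilde\nu(\xi)>0$. By Proposition~\ref{propanddef} applied to $\mu$, such a positively valued root of $F$ in $\widebar K$ is unique and equals the distinguished root $\si$ attached to $\mu$, so $\xi_0=\si$. Consequently the composite
\[
\varphi\colon S\twoheadrightarrow S/\mathfrak{p}\hookrightarrow L\hookrightarrow\widebar K
\]
is an $R$-algebra map sending $x$ to $\si$, with image contained in the subfield $K(\si)\subset\widebar K$.

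To finish, I would identify $\varphi$ with $E_S(\nu)$. By Corollary~\ref{minpol} the minimal polynomial of $\si$ over $K$ is the factor $F^*(X)$ used in the definition of $K^*=K[X]/(F^*(X))$, and the canonical $K$-algebra isomorphism $K^*\xrightarrow{\sim}K(\si)$ sends the class of $X$ to $\si$. Under this isomorphism both $\varphi$ and $E_S(\nu)$ are $R$-algebra maps $S\to K^*$ carrying $x$ to the class of $X$, so they coincide. Since $S/\mathfrak{p}\hookrightarrow L\hookrightarrow\widebar K$ is injective, $\ker\varphi=\mathfrak{p}$, and comparing with $\ker E_S(\nu)=H_S(\nu)$ yields the desired equality.

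The only delicate ingredient is the identification $\xi_0=\si$, which rests on the uniqueness of a positively valued root of $F$ in $\widebar K$ provided by Proposition~\ref{propanddef}; everything else is functoriality of kernels of compositions and the already established description of $K^*$ through the distinguished factor $F^*(X)$.
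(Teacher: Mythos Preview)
Your argument is correct and follows essentially the same route as the paper's proof: embed $\mathrm{Frac}(S/\mathfrak p)$ into $\widebar K$, extend the valuation, and invoke the uniqueness of the positively valued root of $F$ to pin down the situation. The only cosmetic difference is that the paper first identifies $\mathfrak p$ with divisibility by an irreducible factor $Q(X)$ of $F(X)$ and then uses Corollary~\ref{minpol} to get $Q=F^*$, whereas you go straight through Proposition~\ref{propanddef} and compare the resulting $R$-algebra map with $E_S(\nu)$; the underlying idea is the same.
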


\begin{proof}
Taking into account what we saw in the proof of Lemma~\ref{kernel}.\eqref{enum:ker}, the ideal $\mathfrak p$ corresponds in $R[X]$ to a minimal prime ideal $\mathfrak q$ over $(F(X))$ such that $\mathfrak q\cap R=(0)$ and $\mathfrak q\subseteq (m_R,X)$. Using that $K[X]$ is a principal ideal domain and that $R\to R[X]/(F(X))$ is an integral extension, we see that $\mathfrak q$ consists of the polynomials in $R[X]$ that are divisible in $K[X]$ by some irreducible factor $Q(X)\in K[X]$ of $F(X)$. We can write $Q(X)=X^s+q_1X^{s-1}+\ldots+q_0$ with $q_j\in\widebar R$ for all $j$ (see \cite[Ch.~5, \S~1, no.3, Corollary]{B}). Denote by $\bar x$ the image of $X$ in $S/\mathfrak p$. We have a valuation $\mu$ centered in $S/\mathfrak p$ which  extends $\nu$. It satisfies $\mu(\bar x)>0$ and $\mu(q_j)\geq0$ for all $j$. The relation $\bar x^s+q_1\bar x^{s-1}+\ldots+q_0=0$ implies that $\nu(Q(0))=\mu(q_0)>0$. Since the field extension $K\to\text{Frac}(S/\mathfrak p)$ is algebraic, we can embed $\text{Frac}(S/\mathfrak p)$ in $\widebar K$ and extend $\mu$ to $\widebar K$. By Corollary~\ref{minpol} we have $Q(X)=F^*(X)$, and therefore $\mathfrak p=H_S(\nu)$.\end{proof}

Given an extension $\tilde\nu$ of $\nu$ to $\widebar K$, the evaluation at $\si$, namely $P(X)\mapsto P(\si)$, induces a $K$--isomorphism of fields $\pi_{\si}\colon K^*\to K(\si)\subset\widebar K$ (recall that $F^*(X)$ is the minimal polynomial of $\si$ over $K$). The image of the composition 
\[\pi_{\si}\circ E_s(\nu):S\to K^*\to K(\si)\]
is the local $R$--subalgebra $R[\si]_{(m_R,\si)}$ of $K(\si)$. Since the ideal is clear from the context, let us denote it simply by $R[\si]_*$. Observe that the quotient $S/H_s(\nu)$ is naturally isomorphic to $R[\si]_*$ and the restriction of $\tilde\nu$ to $K(\si)$ is centered in $R[\si]_*$ because $\tilde\nu(\si)>0$ and $\nu$ is centered in $R$. In this way $\tilde\nu$ determines a valuation $\tilde\nu_S$ centered in $S/H_S(\nu)$ which is an extension of $\nu$. Next we prove the uniqueness of this extension.\par\noindent
\begin{proposition}\label{uniqextension}
Keep the notation of Proposition~\ref{uniqminprime}. There is a unique valuation centered in $S/H_S(\nu)$ which extends $\nu$ through the inclusion $R\subset S/H_S(\nu)$.
\end{proposition}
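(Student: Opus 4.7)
\noindent\textit{Outline of approach.}
The plan is to pass to the fraction field $K(\si)$ of $S/H_S(\nu)\cong R[\si]_*$ and deduce uniqueness from the conjugacy of valuation extensions to $\widebar K$, combined with the fact, already established in Proposition~\ref{propanddef}, that for any extension of $\nu$ to $\widebar K$ the polynomial $F(X)$ has a unique root of positive value.

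First I would note that, under the identification $S/H_S(\nu)\cong R[\si]_*\subset K(\si)$, giving a valuation centered in $S/H_S(\nu)$ extending $\nu$ is the same as giving a valuation of $K(\si)$ whose restriction to $K$ is $\nu$ and whose valuation ring dominates $R[\si]_*$; in particular, any such valuation $\mu$ must satisfy $\mu(\si)>0$. It therefore suffices to show that there is at most one valuation of $K(\si)$ extending $\nu$ and satisfying this positivity condition on $\si$.

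Suppose $\tilde\nu_1$ and $\tilde\nu_2$ are two such valuations. I would extend each of them to a valuation $\hat\nu_j$ of $\widebar K$. By the classical conjugacy theorem for extensions of a valuation to an algebraic closure (\cite{B}, Ch.~VI), there exists $\s\in\mathrm{Aut}(\widebar K/K)$ with $\hat\nu_2=\hat\nu_1\circ\s$, so that $\tilde\nu_2(x)=\hat\nu_1(\s(x))$ for every $x\in K(\si)$. Specialising to $x=\si$ gives $\hat\nu_1(\s(\si))=\tilde\nu_2(\si)>0$; since $\s$ fixes $K$, the element $\s(\si)$ is again a root of $F(X)\in R[X]$, and by Proposition~\ref{propanddef} the only root of $F(X)$ in $\widebar K$ with positive $\hat\nu_1$-value is $\si$ itself. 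Hence $\s(\si)=\si$, the restriction of $\s$ to $K(\si)$ is the identity, and $\tilde\nu_1=\tilde\nu_2$.

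I do not anticipate a significant obstacle. The substantive point — that centering forces every extension to select the same distinguished root of $F(X)$ — has already been packaged into Proposition~\ref{propanddef}, and what remains is a direct exploitation of it via Galois conjugacy of extensions to the algebraic closure.
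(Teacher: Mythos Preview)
Your argument is correct and follows essentially the same route as the paper: both proofs reduce to the conjugacy of extensions of $\nu$ to $\widebar K$ and then invoke Proposition~\ref{propanddef} to force the conjugating automorphism to fix the distinguished root. The only cosmetic difference is that the paper allows the two extensions to $\widebar K$ to select possibly distinct roots $\si,\si'$ and checks that the induced valuations on the abstract ring $S/H_S(\nu)$ agree via $\pi_{\si}=\pi\!\mid_{K(\si')}\circ\,\pi_{\si'}$, whereas you fix one presentation $R[\si]_*$ from the start and work entirely inside $K(\si)$; this is a legitimate simplification and changes nothing of substance.
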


\begin{proof}
Any such extension of $\nu$ can be obtained in the way explained above starting from an extension to $\widebar K$. Therefore it suffices to take two extensions $\tilde\nu$ and $\tilde\nu'$ of $\nu$ to $\widebar K$ and show that $\tilde\nu_S=\tilde\nu'_S$. In that situation, by \cite[Ch.~VI, \S~7, Corollary 3]{ZS}, there exists a $K$--automorphism $\pi$ of $\widebar K$ such that $\tilde\nu'=\tilde\nu\circ\pi$. Let $\si$ and $\si'$ be the distinguished roots of $F(X)$ in $\widebar K$ associated to $\tilde\nu$ and $\tilde\nu'$, respectively (see Proposition~\ref{propanddef}). Since $\tilde\nu'(\pi^{-1}(\si))=\tilde\nu(\si)>0$, the automorphism $\pi$ must send $\si'$ to $\si$. Recall that $\si$ and $\si'$ have the same minimal polynomial over $K$ by Corollary~\ref{minpol}. We have $\pi_{\si}=\left.\pi\right|_{K(\si')}\circ\pi_{\si'}$ and $\tilde\nu_S=\tilde\nu'_S$.\end{proof}

We are now in position to prove Theorem~\ref{mainthm}.\eqref{hext}.\par\noindent
\begin{proof}[Proof of Theorem~\ref{mainthm}.\eqref{hext}]
The henselization $R^h$ is the inductive limit of the Nagata extensions $S$ of $R$. For every $S$, denote by $f_S:S\to R^h$ the canonical local ring homomorphism. By Lemma~\ref{kernel}.\eqref{enum:compatible}, the inductive limit $H(\nu)=\varinjlim H_S(\nu)$ is well defined and it is an ideal of $R^h$ such that $f_S^{-1}(H(\nu))=H_S(\nu)$ for all $S$. Moreover, it is a minimal prime and lies over the zero ideal of $R$, because all the $H_S(\nu)$ satisfy this and $\mathfrak p=\varinjlim f_S^{-1}(\mathfrak p)$ for any ideal $\mathfrak p$ of $R^h$. The domain $R^h/H(\nu)$ is the union of the $S/H_S(\nu)$. Since the valuation $\nu$ determines uniquely each $H_S(\nu)$ (recall its definition and Proposition~\ref{uniqminprime}), and according to Proposition~\ref{uniqextension} it extends uniquely to each $S/H_S(\nu)$, the same is true for $R^h/H(\nu)$.\end{proof}


\section{Effective computation of the extended valuation}\label{sec3} 

Let us keep the notation introduced in Section~\ref{sec2}. In particular, we have fixed a valuation $\nu$ centered in a local domain $R$ and a non trivial Nagata extension $R\to S$ defined by a Nagata polynomial $F(X)\in R[X]$. We present the quotient $S/H_S(\nu)$ in the form $R[\si]_*$ and call $\tilde\nu$ the unique valuation centered in $R[\si]_*$ extending $\nu$ (see Proposition~\ref{uniqextension} and the paragraph before it).\par 

In this section we show how we can compute the $\tilde\nu$-value of an arbitrary non zero element of $R[\si]_*$. Observe that it is enough to study the values of the form $\tilde\nu(h(\si))$, where $h(X)$ is a polynomial in $R[X]$ such that $0\leq\deg h(X)<\deg F^*(X)$. As a direct consequence, we will obtain that the value group of $\nu$ coincides with the value group of its extension $\tilde\nu$. From this, we deduce that Theorem~\ref{mainthm}.\eqref{grouphext} holds.

\subsection{Initial forms with respect to a valuation}\label{review}
We begin by recalling some definitions and results that we need for the understanding of the rest of the section.\par

A valuation $\nu$ centered in a local domain $R$ determines a filtration on $R$ indexed by the semigroup of values $\Gamma=\nu(R\setminus\set{0})$. This filtration is defined by the ideals 
\[{\mathcal P}_\varphi (R)=\{z\in R\,\vert\, \nu (z)\geq \varphi\}\text{ and }{\mathcal P}^+_\varphi (R)=\{z\in R\,\vert\, \nu (z)> \varphi\}.\]
To this filtration is associated a graded ring
\[{\rm gr}_{\nu}R=\bigoplus_{\varphi\in\Gamma}\frac{{\mathcal P}_\varphi (R)}{{\mathcal P}^+_\varphi (R)},\]
where each element $z$ of $R\setminus\left\lbrace 0\right\rbrace$ has a non zero \emph{initial form} ${\rm in}_{\nu}z$, its image in the quotient $\frac{{\mathcal P}_{\nu (z)} (R)}{{\mathcal P}^+_{\nu (z)} (R)}$. By construction, the value of the difference of two elements is larger than the value of each if and only if they have the same initial form.\par

Let $K$ be the fraction field of $R$ and let $\Phi$ be the value group of the valuation $\nu$, that is, $\Phi=\nu(K\setminus\set{0})$. The \emph{rank}, or \emph{height}, of $\nu$ is the the \emph{cardinal}\footnote{The set of convex subgroups of $\Phi$ may not be well ordered; see \cite[Exercise 3 to Chap.VI, \S~ 4]{B}, where it appears that the set of those convex subgroups of a totally ordered abelian group that are \emph{principal} can realize any totally ordered set. However, the smallest convex subgroup containing a subset of $\Phi$ exists as the intersection of such convex subgroups.} of the totally ordered set (for the order opposite to inclusion) of nonzero prime ideals of $R_\nu$, or equivalently of the totally ordered set (for inclusion) of convex subgroups of $\Phi$ different from $\Phi$. We refer to \cite[Chapter VI, Theorem 15]{ZS} or \cite[Chapter VI, \S~4, no. 5, Definition 2]{B} for details. If the \emph{rational rank} ${\rm dim}_\Q\Phi\otimes_\Z\Q$ of $\nu$ is finite, for example if the ring $R$ is noetherian, (see \cite[Appendix 2, Proposition 1, Proposition 2]{ZS}), the rank is finite.\par

Let $\Psi$ be a proper convex subgroup of $\Phi$, $\Psi\neq(0)$. Let $m_\Psi$ (resp. $p_\Psi$) be the prime ideal of $R_\nu$ (resp. $R$) corresponding to $\Psi$, that is,
\[m_\Psi=\set{x\in R_\nu\,\vert\,\nu(x)\notin\Psi}\text{ and }p_\Psi=m_\Psi\cap R.\]
The valuation $\nu$ is composed of a \emph{residual valuation} $\bar\nu_\Psi$, whose valuation ring $R_{\bar\nu_\Psi}$ is the quotient $R_\nu/m_\Psi$ and with values in $\Psi$, and a valuation $\nu'_\Psi$ whose valuation ring is the localization $R_{m_\Psi}$ and with values in $\Phi/\Psi$. With the usual notation, $\nu=\nu'_\Psi\circ\bar\nu_\Psi$. For every $x\in R_\nu\setminus m_\Psi$, we have  $\bar\nu_\Psi(\bar x)=\nu(x)$, where $\bar x$ denotes the residue class of $x$ in $R_\nu/m_\Psi$. We have an injective local ring map $R/p_\Psi\hookrightarrow R_\nu/m_\Psi$ and the valuation $\bar\nu_\Psi$ induces by restriction a valuation centered in $R/p_\Psi$ (with value group contained in $\Psi$). We denote this valuation also by $\bar\nu_\Psi$ and call it the \emph{residual valuation on $R/p_\Psi$}. We extend its definition to the case of $\Psi=\Phi$ setting $p_\Phi=(0)$ and $\bar\nu_\Phi=\nu$.\par

Let $K\hookrightarrow L$ be an algebraic field extension. For any valuation $\tilde\nu$ of $L$ extending $\nu$, by \cite[Ch.~VI, \S~8, no. 1, Lemme 2]{B} the value group $\tilde\Phi$ of $\tilde\nu$ contains $\Phi$ as a subgroup of finite index, so that the map $\tilde\Psi\mapsto \tilde\Psi\cap\Phi$ is an ordered bijection from the set of convex subgroups of $\tilde\Phi$ to the set of convex subgroups of $\Phi$ (in general this map is surjective). The inverse map associates to a convex subgroup $\Psi\subset\Phi$ the smallest convex subgroup of $\tilde\Phi$ containing $\Psi$. Each convex subgroup $\tilde\Psi$ of $\tilde\Phi$  contains $\Psi=\tilde\Psi\cap\Phi$ as a subgroup of finite index. In particular, $\Psi$ is cofinal in $\tilde\Psi$.

\subsection{Ostrowski's Lemma and initial forms}\label{subsec:Taylorexp}
Now return to our setting. Let us consider the sequences $(\delta_i)_{i\in\N}$ and $(\sigma_i)_{i\geq1}$ attached to the Nagata polynomial $F(X)\in R[X]$ (see Definition~\ref{def:sequences}). For $i\geq1$, set 
\[\eta_i:=\si -\sigma_i\in R[\si]_*.\]
By Proposition~\ref{propanddef}, we have that $\tilde\nu(\eta_i)=\nu(\delta_i)$ for all $i\geq1$.\par 

Let $h(X)$ be a polynomial in $R[X]$ of degree $s\geq 0$. We note that $h(\sigma_i)\in R$ for all $i\geq1$ and we are going to study the behavior of the $\nu(h(\sigma_i))$ as $i$ increases. Since the Nagata extension is non trivial, the $\sigma_i$ are all different and thus $h(\sigma_i)\neq 0$ for all $i$ large enough.\par

Consider the usual expansion $h(X+\alpha)=\sum_{m=0}^s h_m(X)\alpha^m$ of $h(X+\alpha)$ as a polynomial in $X$ and $\alpha$. If the polynomial $h_m(X)$ is not zero, then its degree is $s-m$.\par\noindent
\begin{remark}\label{HasseSchmidt}
The maps $\partial_m\colon h(X)\mapsto h_m(X)$ are Hasse--Schmidt derivations satisfying the identities $\partial_{m}\circ \partial_{m'}=\partial_{m'}\circ \partial_{m}=\binom{m+m'}{ m}\partial_{m+m'}.$ Some use the mnemonic notation $\partial_m=\frac{1}{m!\ }\frac{\partial^m}{\partial X^m}$.
\end{remark}

We have the following identities in $K(\si)$:
\begin{align}
h(\sigma_\infty) = h(\sigma_i)+\sum_{m=1}^sh_m(\sigma_i)\eta_i^m,\label{eq:1}\tag{$\star$}\\
h(\sigma_i)=h(\sigma_\infty)+\sum_{m=1}^sh_m(\sigma_\infty)(-1)^m\eta_i^m.\label{eq:3}\tag{$\star\star$}
\end{align}
Since $\sigma_{i+1}=\sigma_i+\delta_i$, we also have the identity:
\begin{equation}\label{eq:2}\tag{$\star\star\star$}
h(\sigma_{i+1})=h(\sigma_i)+\sum_{m=1}^sh_m(\sigma_i)\delta_i^m.
\end{equation}

\begin{lemma}\label{rat} 
The subgroup of the value group $\Phi$ of $\nu$ generated by the valuations of the $\delta_i$ is finitely generated and therefore of finite rational rank. 
\end{lemma}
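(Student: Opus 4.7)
The plan is to extract from the Newton--Hensel construction a linear recurrence relating $\nu(\delta_{i+1})$ to $\nu(\delta_i)$ whose coefficients range over a fixed finite set, and then iterate. Setting $G(X,T) := \sum_{m=2}^n F_m(X)\, T^{m-2} \in R[X,T]$, I would first combine identity~\eqref{eq:1} applied to $h = F$ at $\alpha = \eta_i$ with the equalities $F(\sigma_\infty) = 0$, $F(\sigma_i) = -F'(\sigma_i)\delta_i$, and $\eta_{i+1} = \eta_i - \delta_i$ to obtain $-F'(\sigma_i)\,\eta_{i+1} = \eta_i^2\, G(\sigma_i, \eta_i)$. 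Taking $\tilde\nu$--values and using $\tilde\nu(F'(\sigma_i)) = 0$ yields the recurrence
\[
\nu(\delta_{i+1}) \;=\; 2\nu(\delta_i) + \tilde\nu\bigl(G(\sigma_i, \eta_i)\bigr).
\]

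Next, substituting~\eqref{eq:3} applied to each $F_m$ and simplifying via the identity $\sum_{j=0}^{\ell}(-1)^j\binom{\ell+2}{j} = (-1)^\ell(\ell+1)$ re-expresses $G(\sigma_i, \eta_i)$ as a polynomial in $\eta_i$ with \emph{fixed} coefficients $a_k := (-1)^k(k+1)F_{k+2}(\sigma_\infty) \in K(\sigma_\infty)$, namely
\[
G(\sigma_i, \eta_i) \;=\; \sum_{k=0}^{n-2} a_k\,\eta_i^k,
\]
which is non--zero as a polynomial because its value is non--zero for each~$i$. Let $e_k := \tilde\nu(a_k) \in \tilde\Phi$ for those indices with $a_k \neq 0$, and let $\Gamma$ denote the finitely generated subgroup of $\tilde\Phi$ they span. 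Since $\tilde\nu(\eta_i) = \nu(\delta_i)$ by Proposition~\ref{propanddef}, the monomial $a_k\eta_i^k$ has $\tilde\nu$--value $e_k + k\,\nu(\delta_i)$.

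I would then argue that any coincidence $e_{k_1} + k_1\nu(\delta_i) = e_{k_2} + k_2\nu(\delta_i)$ with $k_1 \neq k_2$ forces $\nu(\delta_i)$ to equal the unique element $(e_{k_1} - e_{k_2})/(k_2 - k_1)$ of the torsion--free group $\tilde\Phi$ (when this element exists); since $(\nu(\delta_i))$ is strictly increasing, each such pair of indices can force at most one value of $i$. Hence there is a finite exceptional set $E \subset \N$ such that for $i \notin E$ the monomial $\tilde\nu$--values are pairwise distinct, the minimum is uniquely attained at some $k^\star(i) \in \{0, \ldots, n-2\}$, and therefore
\[
\nu(\delta_{i+1}) \;=\; (2 + k^\star(i))\,\nu(\delta_i) + e_{k^\star(i)} \;\in\; \Z\,\nu(\delta_i) + \Gamma.
\]
An easy induction for $i \geq N := \max E + 1$ shows that every $\nu(\delta_i)$ lies in $\langle\nu(\delta_N)\rangle + \Gamma$, so the subgroup of $\tilde\Phi$ generated by $\{\nu(\delta_i)\}_{i \geq 0}$ is contained in the finitely generated group $\langle\nu(\delta_0),\ldots,\nu(\delta_N)\rangle + \Gamma$. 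As a subgroup of a finitely generated abelian group contained in $\Phi$, it is itself finitely generated and thus of finite rational rank. The hardest part is the case analysis for potential cancellations among monomials of $G(\sigma_i, \eta_i)$ sharing the same $\tilde\nu$--value; this is precisely controlled by the strict monotonicity of the pseudo--convergent sequence $(\nu(\delta_i))$.
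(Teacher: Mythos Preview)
Your proof is correct and follows the same strategy as the paper's own argument: both derive an identity expressing $\eta_{i+1}$ (times a unit) as a finite sum whose terms have $\tilde\nu$-value of the form $m\,\nu(\delta_i)+\tilde\nu(F_m(\sigma_\infty))$, and then exploit the strict monotonicity of $(\nu(\delta_i))_i$ to conclude that for all but finitely many $i$ these values are pairwise distinct, yielding the recurrence that traps all $\nu(\delta_i)$ in a finitely generated subgroup of $\tilde\Phi$. The paper reaches the slightly different-looking identity $F'(\sigma_\infty)\eta_{i+1}=\sum_{m=2}^n(\eta_i-m\delta_i)F_m(\sigma_\infty)(-1)^m\eta_i^{m-1}$ by a different manipulation, but your version (with $F'(\sigma_i)$ on the left and pure powers of $\eta_i$ on the right) is equivalent and arguably cleaner; one cosmetic point is that your containing group $\langle\nu(\delta_0),\ldots,\nu(\delta_N)\rangle+\Gamma$ sits in $\tilde\Phi$ rather than $\Phi$, but this is harmless for the conclusion.
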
 
\begin{proof} 
Starting from the equalities $F'(\sigma_i)\delta_i=-F(\sigma_i)=-\sum_{m=1}^nF_m(\sigma_\infty)(-1)^m\eta_i^m$ which follow from \eqref{eq:3} and applying again the equality \eqref{eq:3} to the derivative $F'(X)=F_1(X)$, we obtain the following equality, where the $F'_q(X)$ are those polynomials occurring in the expansion $F'(X+\alpha)=F'(X)+\sum_{q=1}^{n-1}F'_q(X)\alpha^q$:
\[F'(\sigma_\infty)\eta_{i+1}=\sum_{m=2}^nF_m(\sigma_\infty)(-1)^m\eta_i^m+\sum_{q=1}^{n-1}F'_q(\sigma_\infty)(-1)^q\eta_i^q\delta_i.\]
By construction, we have the identity $F'_q(X)=(q+1)F_{q+1}(X)$ for $1\leq q\leq n-1$, so that the previous equality can be rewritten as
\[F'(\sigma_\infty)\eta_{i+1}=\sum_{m=2}^n(\eta_i-m\delta_i)F_m(\sigma_\infty)(-1)^m\eta_i^{m-1}.\]
For $2\leq m\leq n$, we have $\tilde\nu(\eta_i-m\delta_i)=\tilde\nu(\eta_{i+1}-(m-1)\delta_i)=\nu(\delta_i)$. Since the sequence $(\nu(\delta_i))_{i\in\N}$ is strictly increasing, the $\tilde\nu$-values of any two terms of the sum of the right hand side are different for all large enough $i$. Therefore, remembering that $\tilde\nu(F'(\sigma_\infty))=0$ and that none of the $\delta_i$ or $\eta_i$ is zero (because the extension $R\to S$ is non trivial), this equality shows that $\tilde\nu(\eta_{i+1})=\nu(\delta_{i+1})$ is, for large $i$, in the semigroup generated by the $\nu$-values of finitely many $\delta_i$ and the $\tilde\nu$-value of the $F_m(\sigma_\infty)$, $m=2,\ldots,n$. Thus, the subgroup of $\Phi$ generated by the $\nu(\delta_i)$ is contained in a finitely generated subgroup of the value group $\tilde\Phi$ of $\tilde\nu$ and so is finitely generated.\end{proof}

\begin{remark} The fact that the group generated by the $\nu(\delta_i)$ is of finite rational rank also follows from Abhyankar's inequality since this group is contained in the value group of the restriction of $\nu$ to a subring of $R$ which is essentially of finite type over the prime ring and contains the coefficients of $F(X)$. Lemma~\ref{rat} gives a stronger result. \end{remark}

Since the sequence $(\nu(\delta_i))_{i\in\N}$ is strictly increasing, for large $i$ there are no two terms of the sum in \eqref{eq:3} with the same $\tilde\nu$-value. However we do not see immediately that there exists an index $i_0$ such that their $\tilde\nu$-values remain in the same order for $i\geq i_0$. The purpose of the next proposition is to establish this, at least for groups of finite rank (see Remark~\ref{Ku} below).\par\noindent
%
\begin{proposition}{\rm (Ostrowski-Kaplansky; see \cite[Teil III, statement IV, p.371 ff.]{O} and \cite[Lemma 4]{K})} \label{OK} Let $\Phi$ be a totally ordered abelian group of finite rank. Let $\beta_1,\ldots ,\beta_s\in \Phi$ and distinct integers $t_1,\ldots ,t_s\in \N\setminus\{0\}$ be given. Let $(\gamma_\tau)_{\tau\in T}$ be a strictly increasing family of elements of $\Phi$ indexed by a well ordered set $T$ without last element. There exist  an element $\iota\in T$ and a permutation $(k_1,\ldots ,k_s)$ of $(1,\ldots ,s)$ such that for all $\tau\geq\iota$ we have the inequalities
$$\beta_{k_1}+t_{k_1}\gamma_\tau< \beta_{k_2}+t_{k_2}\gamma_\tau<\cdots <\beta_{k_s}+t_{k_s}\gamma_\tau.$$
\end{proposition}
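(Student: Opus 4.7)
The plan is to reduce the statement to a pairwise comparison, then apply an elementary observation about strictly increasing families in a totally ordered group.

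The comparison of two of the values, say $\beta_i + t_i\gamma_\tau$ versus $\beta_j + t_j\gamma_\tau$, is governed by the sign of $\alpha_{ij} + d_{ij}\gamma_\tau$, where $\alpha_{ij} := \beta_j - \beta_i \in \Phi$ and $d_{ij} := t_j - t_i \in \Z\setminus\{0\}$. After possibly replacing the pair $(\alpha_{ij}, d_{ij})$ by its opposite, I may assume $d_{ij} > 0$; then the family $(\alpha_{ij} + d_{ij}\gamma_\tau)_{\tau\in T}$ is strictly increasing in $\Phi$.

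The main observation is that any strictly increasing family $(x_\tau)_{\tau\in T}$ in a totally ordered group, indexed by a well-ordered set $T$ without last element, is eventually of a fixed nonzero sign. Indeed, at most one $\tau$ can satisfy $x_\tau = 0$; if there is some $\tau_0$ with $x_{\tau_0} \geq 0$, then $x_\tau > 0$ for every $\tau > \tau_0$ (such $\tau$ exist since $T$ has no last element), and otherwise $x_\tau < 0$ for every $\tau$. Applied to our family, this yields an index $\iota_{ij} \in T$ such that the strict ordering of $\beta_i + t_i\gamma_\tau$ and $\beta_j + t_j\gamma_\tau$ is fixed for every $\tau \geq \iota_{ij}$.

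Since there are only finitely many pairs $(i,j)$ with $i<j$, I would set $\iota = \max_{i<j}\iota_{ij}$. For every $\tau \geq \iota$, all pairwise comparisons among the $s$ values are determined with strict inequality, independently of $\tau$; they therefore define a total order on $\{1,\ldots,s\}$, i.e.\ a permutation $(k_1,\ldots,k_s)$ of $(1,\ldots,s)$, satisfying $\beta_{k_1} + t_{k_1}\gamma_\tau < \cdots < \beta_{k_s} + t_{k_s}\gamma_\tau$. I do not anticipate a serious obstacle in this scheme; the main point requiring care is the bookkeeping when normalizing signs so as to ensure $d_{ij}>0$ in each pairwise comparison.
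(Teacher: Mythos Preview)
Your argument is correct, and in fact it is both simpler and stronger than the paper's own proof. The paper proceeds by induction on the rank of $\Phi$: in rank one it embeds $\Phi$ into $\R$ and locates the largest real solution of the linear equations $\beta_\ell + t_\ell x = \beta_j + t_j x$ lying below the supremum of the $\gamma_\tau$, while the inductive step passes to the quotient by the largest proper convex subgroup and uses the archimedean property there. Your proof bypasses this entirely by reducing to the trivial observation that a strictly increasing family in a totally ordered set with a distinguished zero is eventually of a fixed nonzero sign, applied to each pairwise difference $(\beta_j - \beta_i) + (t_j - t_i)\gamma_\tau$. Notably, your proof never invokes the finite rank hypothesis; you have effectively reproduced the stronger version of the proposition that the paper itself attributes (in Remark~\ref{Ku}) to an unpublished argument of Kuhlmann, and which the paper circumvents via Lemma~\ref{rat} and Corollary~\ref{OK2}. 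The only cosmetic point to tidy is the phrasing of the sign normalization: rather than ``replacing $(\alpha_{ij}, d_{ij})$ by its opposite,'' it is cleaner to say that for each unordered pair $\{i,j\}$ you relabel so that $t_j > t_i$, whence $d_{ij} > 0$ and the family $(\alpha_{ij} + d_{ij}\gamma_\tau)_\tau$ is strictly increasing.
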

\begin{proof} 
It suffices to prove the following statement, due to Ostrowski: There exist a $\iota\in T$ and a $k\in\{1,\ldots ,s\}$ such that for all $\tau\geq\iota$ we have $\beta_k+t_k\gamma_\tau< \beta_j+t_j\gamma_\tau$ for all $j\neq k$, and then repeat the argument with $\{1,\ldots ,s\}\setminus\{k\}$ and take the largest $\iota$ obtained at the end.

We use induction on the rank $h$ of $\Phi$. If $h=1$, we may assume that $\Phi$ is an ordered subgroup of $\R$. Let $M$ be the the upper bound in $\R$ of the set $(\gamma_\tau)_{\tau\in T}$, with $M=\infty$ if the set is not bounded. Consider the equations $$\beta_\ell+t_\ell x=\beta_j+t_j x\ \ {\rm for \ all\  pairs \ }(\ell,j)\ {\rm with}\ \ell\neq j.$$ Since the $t_i$ are all different, their solutions form a finite set. We denote by $A$ the largest one which is less than $M$, and set $A=0$ if there is no such solution which is greater or equal than zero. For $A<x<M$ the values of the functions $\phi_j=\beta_j+t_jx$ remain distinct. Denote by $\gamma_\eta$ the smallest member of the family $(\gamma_\tau)$ which is greater than $A$ and let $k$ be the integer such that $\phi_k(\gamma_\eta)<\phi_j(\gamma_\eta)$ for $j\neq k$. Since the functions $\phi_k(x)-\phi_j(x)$ have no zero between $\gamma_\eta$ and $M$ the orders of their values are preserved, so that $\beta_k+t_k\gamma_\tau <\beta_j+t_j\gamma_\tau$ for $\tau\geq\eta$, and the value $M$ is never reached since $T$ has no last element and the family $(\gamma_\tau)_{\tau\in T}$ is strictly increasing. This proves the proposition for $h=1$. (This is Ostrowski's original proof).

Assume now that the result is true for all ranks less than $h$, where $h$ is the rank of $\Phi$. Let $\Psi_1\subset\Phi$ be the largest convex subgroup and let us denote by $\pi\colon\Phi\to\Phi/\Psi_1$ the natural map. If the family $(\pi (\gamma_\tau))_{\tau\in T}$ is strictly increasing at least for large $\tau$, we apply the previous argument to the rank one group $\Phi/\Psi_1$ and obtain the result. Otherwise there exist $\theta\in T$ and $\overline\delta\in\Phi/\Psi_1$ such that $\pi(\gamma_\tau)=\overline\delta$ for $\tau\geq\theta$. We consider the minimum value of the $\pi(\beta_i)+t_i\overline\delta$ for $i=1,\ldots ,m$ and denote it by $\overline\zeta$. If this minimum value is attained for a single index $k$ our lemma is proved. Otherwise, denoting by $J\subset \{1,\ldots ,s\}$ the set of indices $i$ such that $\pi(\beta_i)+t_i\overline\delta=\overline\zeta$, we choose an element $\delta$ in $\pi^{-1}(\overline\delta)$ which is smaller than all the $\gamma_\tau\in\pi^{-1}(\overline\delta)$. Then, we choose an element $\zeta\in \pi^{-1}(\overline\zeta)$ which is smaller than the $\beta_i+t_i\delta$ for all indices $i\in J$. Applying the induction hypothesis in $\Psi_1$ with the $\beta'_i=\beta_i+t_i\delta -\zeta$ for $i\in J$ and the $\gamma'_\tau=\gamma_\tau -\delta$, and with $T'=\{\tau\in T\,\vert\,\tau\geq\theta\}$, proves the proposition.\end{proof}
             
\begin{corollary}\label{OK2} In the situation of Proposition~\ref{OK}, if the subgroup of $\Phi$ generated by the elements $\gamma_\tau$ is of finite rational rank, then the proposition is valid for this family $(\gamma_\tau)_{\tau\in T}$.\end{corollary}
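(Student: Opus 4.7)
My plan is to reduce Corollary~\ref{OK2} to Proposition~\ref{OK} by restricting attention to a subgroup of $\Phi$ that contains all the relevant data and already has finite rank. Concretely, I would introduce the subgroup $\Phi'\subset\Phi$ generated by $\beta_1,\dots,\beta_s$ together with all the elements $\gamma_\tau$, $\tau\in T$, endowed with the restriction of the order of $\Phi$.

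The first step is to check that $\Phi'$ has finite rational rank. By hypothesis the subgroup generated by the $\gamma_\tau$ has finite rational rank, and adjoining the finitely many $\beta_i$ can enlarge $\dim_\Q(\Phi'\otimes_\Z\Q)$ by at most $s$. The second step invokes the standard implication, recalled in Subsection~\ref{review}, that a totally ordered abelian group of finite rational rank has finite rank (\cite[Appendix~2, Propositions~1 and 2]{ZS}); this gives that $\Phi'$ has finite rank.

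The last step is to apply Proposition~\ref{OK} verbatim to the totally ordered group $\Phi'$ with the data $\beta_1,\dots,\beta_s$, the distinct positive integers $t_1,\dots,t_s$, and the strictly increasing family $(\gamma_\tau)_{\tau\in T}$ (which remains strictly increasing in $\Phi'$ since the order is induced). This yields an $\iota\in T$ and a permutation $(k_1,\dots,k_s)$ of $(1,\dots,s)$ realizing the chain of inequalities in $\Phi'$; because the order on $\Phi'$ is the restriction of that of $\Phi$, the same chain of inequalities holds in $\Phi$, which is exactly the conclusion sought.

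I do not expect a genuine obstacle here: the argument is a formal reduction, and the only non-trivial ingredient is the passage from finite rational rank to finite rank, which is available off the shelf and is the reason why Proposition~\ref{OK} (whose proof proceeds by induction on the rank) can be applied. If anything, the only point to verify carefully is that $\Phi'$ is closed under the operations needed, i.e.\ that $\beta_{k_i}+t_{k_i}\gamma_\tau\in\Phi'$ for every $\tau$ and every $i$, but this is immediate from the definition of $\Phi'$ as a subgroup.
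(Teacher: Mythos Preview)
Your argument is correct and follows exactly the paper's approach: form the subgroup $\Phi'$ generated by $\beta_1,\dots,\beta_s$ and the $\gamma_\tau$, observe that it has finite rational rank and hence finite rank, and apply Proposition~\ref{OK} inside $\Phi'$. The paper's proof is the one-line version of what you wrote.
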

\begin{proof}Apply Proposition~\ref{OK} to the subgroup generated by $\beta_1,\ldots ,\beta_s,(\gamma_\tau)_{\tau\in T}$; it is of finite rational rank, hence of finite rank.
\end{proof}

\textit{Thus, in view of Lemma~\ref{rat}, we do not need in what follows to assume that the value group $\Phi$ of $\nu$ is of finite rank to use Proposition~\ref{OK} taking as family $(\gamma_\tau)_{\tau\in T}$ the sequence $(\nu(\delta_i))_{i\in\N}$.}\par\noindent
\begin{remark}\label{Ku}Another proof of Proposition~\ref{OK}, which does not assume $\Phi$ to be of finite rank, is given by F.-V. Kuhlmann in \cite[Lemma 8.8]{Ku3}, which is unfortunately not yet published. With this proof, we do not need Lemma~\ref{rat} and Corollary~\ref{OK2} to use Proposition~\ref{OK}.
\end{remark}

We can now use Proposition~\ref{OK} to prove the\par\noindent
\begin{proposition}\label{asym} Let $h(X)\in R[X]$ be a polynomial of degree $s>0$. Keep the notations of the identity \eqref{eq:3} above. There exist $i_0\in\N$ and $k\in\{1,\ldots,s\}$ such that for $i\geq i_0$ we have
\[{\rm in}_{\tilde\nu}(h(\sigma_\infty)-h(\sigma_i))=-{\rm in}_{\tilde\nu}(h_k(\sigma_\infty)(-1)^k\eta_i^k).\] In particular, $\tilde\nu(h(\sigma_\infty)-h(\sigma_i))=\tilde\nu(h_k(\sigma_\infty)(-1)^k\eta_i^k)$ for $i\geq i_0$ and $h(\sigma_\infty)$ is a limit for the valuation $\tilde\nu$ of the pseudo--convergent sequence $(h(\sigma_i))_{i\geq i_0}$.
\end{proposition}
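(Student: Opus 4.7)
The plan is to apply Proposition~\ref{OK} directly to the Taylor expansion $(\star\star)$, which reads
\[h(\sigma_\infty)-h(\sigma_i) \;=\; -\sum_{m=1}^{s} h_m(\sigma_\infty)(-1)^m \eta_i^m.\]
For each $m\in\{1,\dots,s\}$ with $h_m(\sigma_\infty)\neq 0$, the $\tilde\nu$-value of the $m$-th summand is $\beta_m + m\gamma_i$, where $\beta_m := \tilde\nu(h_m(\sigma_\infty))$ and $\gamma_i := \nu(\delta_i) = \tilde\nu(\eta_i)$ (Proposition~\ref{propanddef}). Since $h(X)$ has degree exactly $s$, the polynomial $h_s(X)$ equals the non-zero leading coefficient of $h$, so $h_s(\sigma_\infty)\neq 0$ and the set of non-vanishing summands is non-empty.

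The sequence $(\gamma_i)_{i\in\N}$ is strictly increasing because $\delta_{i+1}$ is a non-zero multiple of $\delta_i^2$ by Lemma~\ref{app}. Lemma~\ref{rat} guarantees that the subgroup of $\Phi$ generated by the $\gamma_i$ is of finite rational rank, so by Corollary~\ref{OK2} we may apply Proposition~\ref{OK} to the family $(\gamma_i)$, the $\beta_m$'s and the pairwise distinct exponents $t_m = m$, restricted to the index set $M := \{m\in\{1,\dots,s\}\,:\,h_m(\sigma_\infty)\neq 0\}$. This yields an $i_0\in\N$ and a permutation of $M$ such that for every $i\geq i_0$ the values $\beta_m + m\gamma_i$ are totally ordered in the same way. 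Let $k\in M$ be the index achieving the (uniformly) strict minimum. Then the $k$-th summand strictly dominates the others in $\tilde\nu$-value, so
\[{\rm in}_{\tilde\nu}\bigl(h(\sigma_\infty)-h(\sigma_i)\bigr) \;=\; -{\rm in}_{\tilde\nu}\bigl(h_k(\sigma_\infty)(-1)^k \eta_i^k\bigr) \qquad (i\geq i_0),\]
and in particular the two $\tilde\nu$-values coincide.

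For the final assertion, decompose $h(\sigma_{i+1})-h(\sigma_i) = \bigl(h(\sigma_\infty)-h(\sigma_i)\bigr) - \bigl(h(\sigma_\infty)-h(\sigma_{i+1})\bigr)$. By what has just been proved, the two terms on the right have $\tilde\nu$-values $\beta_k+k\gamma_i$ and $\beta_k+k\gamma_{i+1}$ respectively; since $\gamma_i<\gamma_{i+1}$, these values differ strictly and the smaller one dominates, giving $\tilde\nu(h(\sigma_{i+1})-h(\sigma_i)) = \beta_k+k\gamma_i = \tilde\nu(h(\sigma_\infty)-h(\sigma_i))$, which is strictly increasing in $i$. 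This shows that $(h(\sigma_i))_{i\geq i_0}$ is pseudo-convergent for $\tilde\nu$ and that $h(\sigma_\infty)$ is one of its limits. The only delicate point in this plan is obtaining a \emph{uniform} ordering of the values $\beta_m + m\gamma_i$ for all large $i$: since $\Phi$ need not be of finite rank, one genuinely needs Lemma~\ref{rat} (or the version of the Ostrowski--Kaplansky lemma in \cite[Lemma~8.8]{Ku3}) to invoke Proposition~\ref{OK}; without it, one merely knows that the $\tilde\nu$-values of the summands are eventually pairwise distinct, not that their order stabilizes.
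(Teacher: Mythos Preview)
Your proof is correct and follows essentially the same route as the paper: both apply Proposition~\ref{OK} (justified via Lemma~\ref{rat} and Corollary~\ref{OK2}) to the terms of the expansion \eqref{eq:3} with $\beta_m=\tilde\nu(h_m(\sigma_\infty))$, $t_m=m$, and $\gamma_i=\nu(\delta_i)=\tilde\nu(\eta_i)$, then deduce pseudo--convergence from the strict increase of $\tilde\nu(h(\sigma_\infty)-h(\sigma_i))$. Your explicit restriction to the index set $M=\{m:h_m(\sigma_\infty)\neq 0\}$ is a small extra bit of care that the paper leaves implicit, and your closing remark on why Lemma~\ref{rat} is genuinely needed matches the paper's own commentary preceding the proposition.
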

\begin{proof} 
The polynomial $h_s(X)$ is a nonzero constant polynomial. If $s=1$ then the first statement is trivial. In the general case it is enough to apply Proposition~\ref{OK} to the $\beta_m=\tilde\nu(h_m(\sigma_\infty))$ in the value group $\tilde\Phi$ of $\tilde\nu$, with  $t_m=m$, $\gamma_i= \nu(\delta_i)$, and $T=\N$, recalling that $\tilde\nu(\eta_i)=\nu(\delta_i)$ and $\nu(\delta_{i+1})\geq 2\nu(\delta_i)$.

The second statement follows directly from the first. In order to prove the last part of the proposition it is enough to observe that if $h(\sigma_\infty)\in K(\sigma_\infty)$ is such that $\tilde\nu(h(\sigma_\infty)-h(\sigma_i))$ is less than $\tilde\nu(h(\sigma_\infty)-h(\sigma_j))$ for $i_0\leq i<j$, then $(h(\sigma_i))_{i\geq i_0}$ is necessarily a pseudo--convergent sequence for $\tilde\nu$ having $h(\sigma_\infty)$ as a limit. Since $\tilde\nu(\eta_i)<\tilde\nu(\eta_j)$ for $i<j$, this ends the proof.\end{proof}

\begin{Remark}
\begin{enumerate}[wide,topsep=0mm,partopsep=0mm,parsep=0mm,itemsep=0.1mm,labelindent=0pt,leftmargin=0pt]
\item Proposition~\ref{asym} is essentially a slightly more precise version of a result of Ostrowski (see \cite[part III, statement III, p.371 ff.]{O}, \cite[Lemma 5]{K} and \cite[Chapter 8]{Ku3}) to the effect that the values taken by a polynomial with coefficients in $R$ on a pseudo--convergent sequence of elements of $R$ (in this case the $\sigma_i$) form themselves a pseudo--convergent sequence and therefore their valuations are eventually either constant or strictly increasing. Ostrowski's result, proved for rank one valuations in \cite{O}, stated for arbitrary rank in \cite{K} and proved in \cite{Ku3}, is more general in that it applies to all pseudo--convergent sequences.
\item We shall see below in Proposition~\ref{ae} that there exist an index $i_0$, an element $a\in R_\nu$ and an integer $e$ with $0\leq e\leq s$ such that for $i\geq i_0$ we have ${\rm in}_\nu h(\sigma_i)={\rm in}_\nu (a\delta_i^e)$.
\end{enumerate}
\end{Remark}

\begin{corollary}\label{approx}Keep the notations of Proposition~\ref{asym}. If the sequence $(\nu(h(\sigma_i)))_{i\geq1}$ is eventually constant, then ${\rm in}_{\tilde\nu}(h(\sigma_\infty))={\rm in}_{\nu}(h(\sigma_i))$ for all $i$ large enough. Otherwise, for all $i$ large enough we have $\tilde\nu(h(\sigma_\infty))>\nu(h(\sigma_{i+1}))>\nu(h(\sigma_i))$ and ${\rm in}_{\nu}(h(\sigma_i))=-{\rm in}_{\tilde\nu}(h_k(\sigma_\infty)(-1)^k\eta_i^k)$.
\end{corollary}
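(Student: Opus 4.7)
The plan is to directly unpack Proposition~\ref{asym} using the general dichotomy for pseudo--convergent sequences. Indeed, Proposition~\ref{asym} already provides the identity $\tilde\nu(h(\sigma_\infty)-h(\sigma_i))=\tilde\nu(h_k(\sigma_\infty))+k\nu(\delta_i)$, which is \emph{strictly} increasing in $i$ because $(\nu(\delta_i))_{i\in\N}$ is; and since $(h(\sigma_i))_{i\geq i_0}$ is pseudo--convergent for $\tilde\nu$ with limit $h(\sigma_\infty)$, the facts on pseudo--convergent sequences recalled in Section~\ref{sec2} tell us that $(\nu(h(\sigma_i)))_{i\geq 1}$ is \emph{either} eventually constant \emph{or} eventually strictly increasing. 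These two alternatives match the two cases of the corollary, and in both I would compare $\tilde\nu(h(\sigma_\infty))$ with $\tilde\nu(h(\sigma_i))=\nu(h(\sigma_i))$ and with $\tilde\nu(h(\sigma_\infty)-h(\sigma_i))$ by means of the tautology $h(\sigma_i)=h(\sigma_\infty)-(h(\sigma_\infty)-h(\sigma_i))$.

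In the case where $\nu(h(\sigma_i))$ is eventually equal to some $c$, I would show that the strictly increasing sequence $\tilde\nu(h(\sigma_\infty)-h(\sigma_i))$ must eventually exceed $c$. A strictly increasing sequence in a totally ordered group attains any given value at most once, so the alternative would be that $\tilde\nu(h(\sigma_\infty)-h(\sigma_i))<c$ for all sufficiently large $i$; but then the tautology would force $\tilde\nu(h(\sigma_\infty))=\tilde\nu(h(\sigma_\infty)-h(\sigma_i))$ to vary with $i$, which is impossible. Once this is established, the tautology immediately yields $\tilde\nu(h(\sigma_\infty))=\tilde\nu(h(\sigma_i))$ together with ${\rm in}_{\tilde\nu}(h(\sigma_\infty))={\rm in}_{\tilde\nu}(h(\sigma_i))$, and the latter is identified with ${\rm in}_\nu(h(\sigma_i))$ because $h(\sigma_i)\in R$ and $\tilde\nu$ extends $\nu$.

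In the case where $(\nu(h(\sigma_i)))$ is strictly increasing, the mirror argument gives $\tilde\nu(h(\sigma_\infty)-h(\sigma_i))<\tilde\nu(h(\sigma_\infty))$ for $i$ large enough; otherwise the strictly increasing sequence would eventually stay above $\tilde\nu(h(\sigma_\infty))$, making $\tilde\nu(h(\sigma_i))=\tilde\nu(h(\sigma_\infty))$ eventually constant, contrary to the case hypothesis. From this, $\nu(h(\sigma_i))=\tilde\nu(h(\sigma_\infty)-h(\sigma_i))<\tilde\nu(h(\sigma_\infty))$; applying the same to $i+1$ and combining with the case hypothesis produces the chain $\tilde\nu(h(\sigma_\infty))>\nu(h(\sigma_{i+1}))>\nu(h(\sigma_i))$. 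For the initial form, the tautology together with the strict inequality $\tilde\nu(h(\sigma_\infty))>\tilde\nu(h(\sigma_\infty)-h(\sigma_i))$ forces ${\rm in}_{\tilde\nu}(h(\sigma_i))=-{\rm in}_{\tilde\nu}(h(\sigma_\infty)-h(\sigma_i))$, and substituting the expression from Proposition~\ref{asym} produces the stated formula (once ${\rm in}_{\tilde\nu}$ is identified with ${\rm in}_\nu$ on elements of $R$).

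The main point to keep in mind is that $\Phi$ is not assumed to be Archimedean, so one cannot appeal to cofinality of strictly increasing sequences; the dichotomy is resolved cleanly only because the strict monotonicity of $\tilde\nu(h(\sigma_\infty)-h(\sigma_i))$ in $i$ implies it crosses any fixed value of $\tilde\Phi$ at most once. Once this is accepted, the proof reduces to the bookkeeping of the two possible positions of the threshold value $\tilde\nu(h(\sigma_\infty))$ relative to the two strictly increasing sequences of valuations that Proposition~\ref{asym} makes available.
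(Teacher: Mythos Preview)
Your proof is correct and follows essentially the same approach as the paper's: both split into the two cases of the pseudo--convergent dichotomy and compare $\tilde\nu(h(\sigma_\infty))$, $\nu(h(\sigma_i))$, and the strictly increasing $\tilde\nu(h(\sigma_\infty)-h(\sigma_i))$ via the ultrametric inequality. The only cosmetic difference is that in the strictly increasing case the paper phrases the key inequality as ``$0$ is also a limit of $(h(\sigma_i))$, hence $\tilde\nu(h(\sigma_\infty)-0)>\nu(h(\sigma_i))$'', and it singles out the case $h(\sigma_\infty)=0$ explicitly, whereas your contradiction argument handles it uniformly.
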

\begin{proof} 
Since $(h(\sigma_i))_{i\geq i_0}$ is pseudo--convergent, then either there exists $i_1\geq i_0$ such that the sequence $(\nu(h(\sigma_i)))_{i\geq i_1}$ is constant or $\nu(h(\sigma_j))>\nu(h(\sigma_i))$ whenever $j>i\geq i_0$.

Assume that we are in the first case and call $\varphi=\nu(h(\sigma_i))$, $i\geq i_1$. Since $(\tilde\nu(\eta_i))_{i\geq1}$ is strictly increasing, for all $i$ large enough, $\varphi$ is different from $\tilde\nu(h_k(\sigma_\infty)(-1)^k\eta_i^k)$. By the same argument, $\tilde\nu(h(\sigma_\infty))\neq\tilde\nu(h_k(\sigma_\infty)(-1)^k\eta_i^k)$ for any sufficiently large $i$. Then, for large $i$ we must have $\tilde\nu(h(\sigma_\infty))=\nu(h(\sigma_i))<\tilde\nu(h(\sigma_\infty)-h(\sigma_i))=\tilde\nu(h_k(\sigma_\infty)(-1)^k\eta_i^k)$ and, as a consequence, ${\rm in}_{\tilde\nu}(h(\sigma_\infty))={\rm in}_{\nu}(h(\sigma_i))$.

If $\nu(h(\sigma_j))>\nu(h(\sigma_i))$ for $j>i\geq i_0$, then $y=0$ is a limit for $\tilde\nu$ of the pseudo--convergent sequence $(h(\sigma_i))_{i\geq i_0}$. If $h(\si)=0$ then the result is clear because $h(\sigma_i)\neq0$ for large $i$, so assume that $h(\si)\neq0$. From the fact that $h(\sigma_\infty)$ is also a limit we deduce that \[\tilde\nu(h(\sigma_\infty))=\tilde\nu(h(\sigma_\infty)-y)>\nu(h(\sigma_j)-h(\sigma_i))=\nu(h(\sigma_i))\] for $j>i\geq i_0$. Hence for $i\geq i_0$, ${\rm in}_{\tilde\nu}(h(\sigma_\infty)-h(\sigma_i))=-{\rm in}_{\nu}(h(\sigma_i))$, which coincides with ${\rm in}_{\tilde\nu}(h_k(\sigma_\infty)(-1)^k\eta_i^k)$.\end{proof}

\subsection{The convex subgroups associated to a Nagata polynomial}\label{subsec:firststeps} 
In this subsection we denote by $\Phi$ the value group of $\nu$. We make the first steps towards the computation of $\tilde\nu$. Our main tools are  Lemma~\ref{quot}, Corollary~\ref{approx}, and two convex subgroups of $\Phi$ that we associate to a Nagata polynomial, one of them invariant after any change of variable $X\mapsto X'+\alpha$ in $R[X]$ with $\alpha\in m_R$.\par\noindent
\begin{definition}\label{def:noni}
Let $\nu$ be a valuation centered in a local domain $R$ and let $\Phi$ be its value group. 
Let $F(X)\in R[X]$ be a Nagata polynomial such that $\delta_i\neq0$ for all $i\geq0$, where $(\delta_i)_{i\in\N}$ is the Newton sequence of values attached to it. The \emph{convex subgroup $\Psi_F$ of $\Phi$ associated to $F(X)$} is the smallest convex subgroup of $\Phi$ containing all the $\nu(\delta_i)$.
\end{definition}

In particular the subgroup $\Psi_F$ is defined for Nagata polynomials giving rise to a non trivial Nagata extension. We have $\Psi_F\neq(0)$ because all the $\delta_i$ belong to $m_R$. We now come back to the behavior of the $\delta_i$ with the following two observations:\par\noindent
\begin{lemma}\label{cofinal}
Let $\tilde\Phi$ be the value group of $\tilde\nu$ and let $\widetilde{\Psi_F}$ be the smallest convex subgroup of $\tilde\Phi$ containing $\Psi_F$. The $\nu(\delta_i)$ are cofinal in $\Psi_F$ and therefore in $\widetilde{\Psi_F}$.
\end{lemma}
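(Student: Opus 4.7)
My plan is to exploit the geometric growth of the Newton--Hensel increments provided by the last part of Lemma~\ref{app}, namely $\nu(\delta_{i+1})\geq 2\nu(\delta_i)$. After finitely many steps the values $\nu(\delta_i)$ will dominate any prescribed positive multiple of an earlier value, and the desired cofinality will follow almost immediately.

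First, I would verify cofinality of the sequence $(\nu(\delta_i))_{i\in\N}$ inside the subgroup $G\subset\Phi$ that it generates. Every element $g\in G$ is a finite $\Z$-linear combination $g=\sum_{k=1}^r n_k\nu(\delta_{i_k})$; setting $J=\max_k i_k$ and $M=\sum_k|n_k|$, and using that all $\nu(\delta_i)$ are positive since the $\delta_i$ lie in $m_R$, we obtain $g\leq M\,\nu(\delta_J)$. Iterating the doubling inequality yields $\nu(\delta_{J+\ell})\geq 2^\ell\nu(\delta_J)$, so picking $\ell$ with $2^\ell\geq M$ gives $\nu(\delta_{J+\ell})\geq g$. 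Hence the $\nu(\delta_i)$ are cofinal in $G$.

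Next, I would pass from $G$ to $\Psi_F$. By definition $\Psi_F$ is the smallest convex subgroup of $\Phi$ containing every $\nu(\delta_i)$, so it coincides with the convex hull of $G$ inside $\Phi$. Consequently each positive $\psi\in\Psi_F$ is bounded above by some positive element of $G$, which by the previous step is itself bounded above by some $\nu(\delta_i)$. This proves the first half of the statement.

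For the final assertion, I would invoke the observation recorded at the end of Subsection~\ref{review}: the smallest convex subgroup $\widetilde{\Psi_F}$ of $\tilde\Phi$ containing $\Psi_F$ satisfies $\widetilde{\Psi_F}\cap\Phi=\Psi_F$, and this intersection has finite index in $\widetilde{\Psi_F}$, so $\Psi_F$ is cofinal in $\widetilde{\Psi_F}$. Combined with the previous paragraph, the $\nu(\delta_i)$ are then cofinal in $\widetilde{\Psi_F}$ as well. The whole argument is essentially a bookkeeping exercise around the doubling inequality from Lemma~\ref{app}, and I foresee no serious obstacle.
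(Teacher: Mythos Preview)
Your proof is correct, and in fact more direct than the paper's. Both arguments break the task into (i) cofinality of the $\nu(\delta_i)$ in the subgroup $\Phi'$ they generate, (ii) cofinality of $\Phi'$ in $\Psi_F$, and (iii) cofinality of $\Psi_F$ in $\widetilde{\Psi_F}$; step (iii) is handled identically via the end of Subsection~\ref{review}. For step (i) the paper first invokes Lemma~\ref{rat} to obtain that $\Phi'$ has finite rank, then passes to the archimedean quotient $\Phi'/\Psi'_1$ by the largest proper convex subgroup, where the doubling inequality forces cofinality. You bypass this entirely: any element of $\Phi'$ is bounded by $M\nu(\delta_J)$ for suitable $M,J$, and then $\nu(\delta_{J+\ell})\geq 2^\ell\nu(\delta_J)\geq M\nu(\delta_J)$ once $2^\ell\geq M$, so Lemma~\ref{rat} is not needed here at all. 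For step (ii) the paper argues by contradiction, constructing a smaller convex subgroup of $\Phi$ containing $\Phi'$ if cofinality were to fail; you instead identify $\Psi_F$ explicitly as the convex hull $\{x\in\Phi:\vert x\vert\leq g\text{ for some }g\in G_{\geq 0}\}$ of $G=\Phi'$, from which the bound is immediate. Your route is shorter and more elementary; the paper's version has the minor expository advantage of foregrounding the finite-rank structure of $\Phi'$ that is exploited elsewhere in Section~\ref{sec3}.
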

\begin{proof}
We have seen in Subsection~\ref{review} that $\Psi_F$ is cofinal in $\widetilde{\Psi_F}$. Next we prove that $(\nu(\delta_i))_{i\in\N}$ is cofinal in $\Psi_F$.

Observe that $\Psi_F$ is the smallest convex subgroup of $\Phi$ which contains the subgroup $\Phi'\subset\Phi$ generated by the $\nu(\delta_i)$. Let us show that $\Phi'$ is cofinal in $\Psi_F$. It suffices to deal with the positive semigroups of the groups in sight. If there exists $\varphi\in\Psi_{F,>0}$ such that for any $\zeta\in\Phi'_{>0}$ we have $\zeta<\varphi$, we see that the elements $\{\theta\in\Phi_{\geq 0}\,\vert\, \forall n\in\N,\ n\theta<\varphi\}$ form a semigroup: given $\theta, \theta'$ with this property we may assume that $\theta\leq \theta'$  and then $n(\theta+\theta')\leq 2n\theta'<\varphi$ for all $n\in\N$. This semigroup contains $\Phi'_{\geq 0}$, is the positive part of a convex subgroup of $\Phi$ and does not contain $\varphi$. This contradicts the minimality of $\Psi_F$. Thus, $\Phi'$ is cofinal in $\Psi_F$ and to finish the proof it suffices to show that the $\nu(\delta_i)$ are cofinal in $\Phi'$, which has finite rank by Lemma~\ref{rat}.

Let $\Psi'_1$ be the largest proper convex subgroup of $\Phi'$ (it exists because $\Phi'$ has finite rank). By construction it cannot contain all the $\nu(\delta_i)$. We know from Proposition~\ref{app} that $\nu(\delta_{i+1})\geq 2\nu(\delta_i)$. The images of the $\nu(\delta_i)$ in the rank one group $\Phi'/\Psi'_1$ satisfy the same inequality and therefore are cofinal in this archimedian ordered group. This implies that the $\nu(\delta_i)$ are cofinal in $\Phi'$.\end{proof}

\begin{lemma}\label{simple}
There exists $i_0\geq0$ such that $\Psi_F$ is the smallest convex subgroup of $\Phi$ containing $\nu(\delta_{i_0})$.
\end{lemma}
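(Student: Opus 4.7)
The plan is to reduce the infinitely many generators $\nu(\delta_i)$ to a single one by invoking the finite generation established in Lemma~\ref{rat}. First I would note that by Lemma~\ref{rat} the subgroup $\Phi'\subseteq\Phi$ generated by all the $\nu(\delta_i)$ is finitely generated as an abelian group. Since it is generated by the countable set $\{\nu(\delta_i)\}_{i\in\N}$, a standard argument (any finitely generated abelian group generated by a set is generated by a finite subset of that set) yields indices $i_1<\cdots<i_k$ such that $\nu(\delta_{i_1}),\ldots,\nu(\delta_{i_k})$ already generate $\Phi'$.

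Next I would set $i_0:=i_k$ and show that the smallest convex subgroup $C$ of $\Phi$ containing $\nu(\delta_{i_0})$ coincides with $\Psi_F$. Recall from Lemma~\ref{app} that $\nu(\delta_{j+1})\geq 2\nu(\delta_j)$, so the sequence $(\nu(\delta_i))_{i\in\N}$ is strictly increasing and all its terms are positive. Hence $0<\nu(\delta_{i_j})\leq\nu(\delta_{i_0})$ for each $j=1,\ldots,k$. Since
\[
C=\{x\in\Phi\,\vert\,|x|\leq n\nu(\delta_{i_0})\text{ for some }n\in\N\}
\]
is a convex subgroup, each $\nu(\delta_{i_j})$ lies in $C$, and therefore the subgroup they generate, namely $\Phi'$, is contained in $C$. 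In particular $C$ contains every $\nu(\delta_i)$.

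Finally, the two minimality conditions force equality: $C$ is a convex subgroup containing all $\nu(\delta_i)$, so minimality of $\Psi_F$ gives $\Psi_F\subseteq C$; conversely $\Psi_F$ is a convex subgroup containing $\nu(\delta_{i_0})$, so minimality of $C$ gives $C\subseteq\Psi_F$. I do not expect any serious obstacle here; the only point requiring care is the passage from a finite generating set of $\Phi'$ to a single element dominating it, which works precisely because $(\nu(\delta_i))$ is monotone (Lemma~\ref{app}) and $\Phi'$ is finitely generated (Lemma~\ref{rat}).
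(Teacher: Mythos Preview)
Your proof is correct and follows essentially the same approach as the paper's: invoke Lemma~\ref{rat} to get finite generation of the subgroup $\Phi'$ generated by the $\nu(\delta_i)$, extract a finite generating subset $\{\nu(\delta_i)\}_{i\leq i_0}$, and then use the monotonicity $\nu(\delta_i)<\nu(\delta_{i+1})$ together with convexity to reduce to the single element $\nu(\delta_{i_0})$. The paper's version is terser (it passes directly to an initial segment rather than an arbitrary finite subset and leaves the final convexity step to the reader), but the logic is identical.
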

\begin{proof}
By Lemma~\ref{rat}, the subgroup $\Phi'$ of $\Phi$ generated by the $\nu(\delta_i)$ is finitely generated, so there exists $i_0\geq0$ such that $\Phi'=\Z\nu(\delta_{0})+\ldots+\Z\nu(\delta_{i_0})$. Therefore $\Psi_F$ is the smallest convex subgroup of $\Phi$ that contains the set $\set{\nu(\delta_i)}_{i=0}^{i_0}$. To finish the proof, use that $\nu(\delta_i)<\nu(\delta_{i+1})$.
\end{proof}

\begin{remark}
With the notations of Lemma~\ref{cofinal}, if $\Psi_F=\Phi$ then $\widetilde{\Psi_F}=\tilde\Phi$.
\end{remark}

Let $\bar\nu_{\Psi_F}$ be the residual valuation on $R/p_{\Psi_F}$ (see Subsection~\ref{review}). 
We call $L_F$ the fraction field of $R/p_{\Psi_F}$ and we fix an algebraic closure $\widebar L_F$ of $L_F$. Given $a\in R$, we denote by $\bar a$ the residue class of $a$ modulo $p_{\Psi_F}$.\par

The image $\widebar F(X)\in R/p_{\Psi_F}[X]$ of the Nagata polynomial $F(X)\in R[X]$ is again a Nagata polynomial. In addition, the Newton sequence of values and the sequence of partial sums attached to $\widebar F(X)$ are $(\bar\delta_i)_{i\in\N}$ and $(\bar\sigma_i)_{i\geq1}$, respectively (that is, they are obtained from the sequences attached to $F(X)$ by reduction modulo $p_{\Psi_F}$). By construction, $\bar\delta_i$ is different from zero and $\nu(\delta_i)=\bar\nu_{\Psi_F}(\bar\delta_i)$ for all $i\geq0$, and the convex subgroup $\Psi_{\widebar F}$ associated to $\widebar F(X)$ is the whole group of the valuation $\bar\nu_{\Psi_F}$.\par 

Hence we have a Nagata extension $R/p_{\Psi_F}\to S_F=(R/p_{\Psi_F}[X]/(\widebar F(X)))_{\mathcal{N}_F}$, where $\mathcal{N}_F$ corresponds to the maximal ideal $(m_R/p_{\Psi_F},X)$ of $R/p_{\Psi_F}[X]$; and a valuation $\bar\nu_{\Psi_F}$, in the sequel called $\bar\nu_F$, which is centered in the local domain $R/p_{\Psi_F}$. According to what we saw in Section~\ref{sec2}, the valuation $\bar\nu_F$ determines a minimal prime ideal $H_{S_F}(\bar\nu_F)$ of $S_F$ with the property that $\bar\nu_F$ extends uniquely to a valuation $\tilde{\bar\nu}_F$ centered in $S_F/H_{S_F}(\bar\nu_F)$ through the inclusion $R/p_{\Psi_F}\subset S_F/H_{S_F}(\bar\nu_F)$. We present the quotient $S_F/H_{S_F}(\bar\nu_F)$ in the form $R/p_{\Psi_F}[\bar\sigma_\infty]_*$ where $\bar\sigma_\infty\in\widebar L_F$ satisfies $\widebar F(\bar\sigma_\infty)=0$ and $\tilde{\bar\nu}_F(\bar\sigma_\infty)>0$.\par

We shall need the following lemma, in which we keep these notations. Observe that in the case where $\Psi_F=\Phi$ it also holds (one has $R/p_{\Psi_F}=R$, $S_F=S$, and $\bar\nu_F=\nu$).\par\noindent
\begin{lemma}\label{quot}
Let $h(X)\in R[X]$ be such that its image $\bar h(X)$ in $R/p_{\Psi_F}[X]$ is not zero. Then for all $i$ large enough we have $\nu(h(\sigma_i))\in\Psi_F$, and the following are equivalent:
\begin{enumerate}
\item\label{enum:quot1} The sequence $(\nu(h(\sigma_i)))_{i\geq1}$ is not eventually constant.
\item\label{enum:quot2} The image of $\bar h(X)$ in $S_F$ belongs to $H_{S_F}(\bar\nu_F)$.
\end{enumerate}
\end{lemma}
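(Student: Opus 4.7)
The plan is to transport the problem to the residual valuation $\bar\nu_F$ on $R/p_{\Psi_F}$ and apply the results of Subsection~\ref{subsec:Taylorexp} to the reduced Nagata polynomial $\widebar F(X)$ and the reduced polynomial $\bar h(X)$. First I would verify that the $\bar\sigma_i$ are pairwise distinct in $R/p_{\Psi_F}$: the differences $\bar\sigma_{i+1}-\bar\sigma_i=\bar\delta_i$ have $\bar\nu_F$-value equal to $\nu(\delta_i)$, which is strictly positive and strictly increasing in $\Psi_F$ by Lemma~\ref{cofinal}. Since $\bar h(X)$ is a nonzero polynomial over the domain $R/p_{\Psi_F}$, it can vanish on only finitely many of the $\bar\sigma_i$. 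Hence $\overline{h(\sigma_i)}=\bar h(\bar\sigma_i)\neq 0$ for $i$ sufficiently large, which means $h(\sigma_i)\notin p_{\Psi_F}$. By the very definition of $p_{\Psi_F}$ this gives $\nu(h(\sigma_i))\in\Psi_F$, with the explicit identification $\nu(h(\sigma_i))=\bar\nu_F(\bar h(\bar\sigma_i))$, establishing the first part of the lemma.

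For the equivalence I would use the presentation $S_F/H_{S_F}(\bar\nu_F)=R/p_{\Psi_F}[\bar\sigma_\infty]_*$ fixed just before the statement: the image of $\bar h(X)$ in $S_F$ lies in $H_{S_F}(\bar\nu_F)$ if and only if $\bar h(\bar\sigma_\infty)=0$. By the identification of values above, $(\nu(h(\sigma_i)))_{i\geq 1}$ is eventually constant if and only if $(\bar\nu_F(\bar h(\bar\sigma_i)))_{i\geq 1}$ is. I then apply Corollary~\ref{approx} in the residual setting (to $R/p_{\Psi_F}$, $\widebar F$, $\bar h$ and $\bar\nu_F$). In the eventually constant case, Corollary~\ref{approx} yields $\tilde{\bar\nu}_F(\bar h(\bar\sigma_\infty))$ equal to the stationary value, which lies in $\Psi_F$ and in particular is finite, so $\bar h(\bar\sigma_\infty)\neq 0$. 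In the non-constant case, Proposition~\ref{asym} shows that $\bar\nu_F(\bar h(\bar\sigma_i))$ grows as $\tilde{\bar\nu}_F(\bar h_k(\bar\sigma_\infty))+k\,\nu(\delta_i)$ for large $i$, and these values are cofinal in $\widetilde{\Psi_F}$ by Lemma~\ref{cofinal}. Since Corollary~\ref{approx} forces $\tilde{\bar\nu}_F(\bar h(\bar\sigma_\infty))$ to be strictly greater than all of them, no finite value can do the job, and therefore $\bar h(\bar\sigma_\infty)=0$.

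The main obstacle is the comparison between the value group $\Psi_F$ of $\bar\nu_F$ and the (possibly larger) value group $\widetilde{\Psi_F}$ of its extension $\tilde{\bar\nu}_F$: one must rule out the scenario in which $\tilde{\bar\nu}_F(\bar h(\bar\sigma_\infty))$ is a finite element of $\widetilde{\Psi_F}$ dominating every $\bar\nu_F(\bar h(\bar\sigma_i))\in\Psi_F$. Lemma~\ref{cofinal} is precisely what closes this gap, by asserting cofinality of the $\nu(\delta_i)$ not merely in $\Psi_F$ but already in $\widetilde{\Psi_F}$. Once this is available, the whole lemma becomes a clean translation of the results of Subsection~\ref{subsec:Taylorexp} to the residual situation.
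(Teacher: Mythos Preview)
Your proof is correct and follows essentially the same approach as the paper: both reduce to the residual setting via the identification $\nu(h(\sigma_i))=\bar\nu_F(\bar h(\bar\sigma_i))$, then invoke Corollary~\ref{approx} for the eventually constant case and combine Corollary~\ref{approx} with the cofinality statement of Lemma~\ref{cofinal} (cofinality in $\widetilde{\Psi_F}$, not merely $\Psi_F$) to force $\bar h(\bar\sigma_\infty)=0$ in the non-constant case. Your explicit verification that the $\bar\sigma_i$ are pairwise distinct is a slight elaboration of what the paper leaves implicit, and your identification of the $\Psi_F$ versus $\widetilde{\Psi_F}$ issue as the crux is spot on.
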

\begin{proof} 
This is automatically true if $\bar h(X)$ is a non zero constant polynomial, so assume that $\deg\bar h(X)>0$. The elements $\bar h(\bar\sigma_i)\in R/p_{\Psi_F}$ are the images of the $h(\sigma_i)$ under the natural epimorphism $R\to R/p_{\Psi_F}$, and they cannot be zero for infinitely many values of $i$, so that $h(\sigma_i)\notin p_{\Psi_F}$, which means that $\nu(h(\sigma_i))=\bar\nu_F(\bar h(\bar\sigma_i))\in\Psi_F$, at least for large $i$. This equality proves the first part of the result and will be implicitly used in what follows.

Let us now prove that \eqref{enum:quot1} and \eqref{enum:quot2} are equivalent. Recall that \eqref{enum:quot2} holds if and only if $\bar h(\bar\sigma_\infty)= 0$ in $L_F(\bar\sigma_\infty)$. In view of Corollary~\ref{approx}, if $\nu(h(\sigma_i))$ is constant for large $i$, its value is the $\tilde{\bar\nu}_F$-value of the element $\bar h(\bar\sigma_\infty)$ and is in $\Psi_F$, therefore $\bar h(\bar\sigma_\infty)\neq 0$. If $\nu(h(\sigma_i))$ is not constant for large $i$, by Corollary~\ref{approx} there exists $k$ such that $1\leq k\leq\deg\bar h(X)$ and
\[\tilde{\bar\nu}_F(\bar h(\bar\sigma_\infty))>\bar\nu_F(\bar h(\bar\sigma_i))=\tilde{\bar\nu}_F(\bar h_k(\bar\sigma_\infty)(-1)^k\bar\eta_i^k)=\tilde{\bar\nu}_F(\bar h_k(\bar\sigma_\infty))+k\,\bar\nu_F(\bar\delta_i)\] 
for all $i$ large enough, where $\bar\eta_i=\bar\sigma_\infty-\bar\sigma_i$ and $\bar h_k(X)\in R/p_{\Psi_F}[X]$. These inequalities and Lemma~\ref{cofinal} imply that the $\bar\nu_F(\bar h(\bar\sigma_i))$ are cofinal in the value group of $\tilde{\bar\nu}_F$. Thus, $\bar h(\bar\sigma_\infty)=0$.\end{proof}

\begin{remark}\label{partialanswer}
We know how to compute the $\tilde\nu$-value of any non zero element of $K(\si)$ once we know to calculate $\tilde\nu(h(\si))$ for $h(X)\in R[X]$ such that $0<\deg h(X)<\deg F^*(X)$. Let us apply Lemma~\ref{quot} and Corollary~\ref{approx} to such a polynomial. When $\Psi_F=\Phi$, these results say that $\tilde\nu(h(\si))=\nu(h(\sigma_i))$ for all $i$ large enough. If $\Psi_F\subsetneq\Phi$, then we would reach the same conclusion if could guarantee that $\bar h(X)\in R/p_{\Psi_F}[X]$ is a non zero polynomial whose degree is less that the degree of the minimal polynomial of $\bar\sigma_\infty$ over $L_F$. This condition on the degree is satisfied if $F(X)$ becomes irreducible, more precisely, when its image $\widebar F(X)\in R/p_{\Psi_F}[X]$ is irreducible in $L_F[X]$. Observe that $\deg \bar h(X)\leq\deg h(X)<\deg F^*(X)\leq \deg F(X)=\deg \widebar F(X)$.
\end{remark}

As explained in Remark~\ref{changeofvariable}, given $\alpha\in m_R$, $F_\alpha(X'):=F(X'+\alpha)\in R[X']$ is a Nagata polynomial defining a Nagata extension $S_\alpha$ of $R$ that is isomorphic to $S$. Moreover, since $\nu(F^*(\alpha))>0$ we have $F_\alpha^*(X')=F^*(X'+\alpha)$, and therefore $S/H_S(\nu)$ is isomorphic to $S_\alpha/H_{S_\alpha}(\nu)$.\par

So we may first make a change of variable $X'=X-\alpha$ with $\alpha\in m_R$, and then construct the 
Newton sequence of values $(\delta_i^{(\alpha)})_{i\in\N}$ and the sequence of partial sums $(\sigma_i^{(\alpha)})_{i\geq1}$ attached to the polynomial $F_\alpha(X')$ by iterating Newton's method.\par 

Since the Nagata extension $R\to S$ is non trivial, $F(a)\neq 0$ for all $a\in m_R$, and the set $\set{\nu(F(a))\,\vert\,a\in m_R}$ is contained in $\Phi$. This subset of the value group does not depend on the choice of the variable: for all $\alpha\in m_R$, it equals the set  $\set{\nu(F_\alpha(a))\,\vert\,a\in m_R}\subseteq\Phi$.\par\noindent
\begin{definition}
Let $\nu$ be a valuation centered in a local domain $R$ and let $\Phi$ be its value group. Let $F(X)\in R[X]$ be a Nagata polynomial defining a non trivial Nagata extension of $R$. The \emph{intrinsic convex subgroup $\Psi$ of $\Phi$ associated to $F(X)$} is the smallest convex subgroup of $\Phi$ containing all the $\nu(F(a))$ with $a\in m_R$.
\end{definition}

Given $\alpha\in m_R$, in Definition~\ref{def:noni} we associated to the polynomial $F_\alpha(X')\in R[X']$ a convex subgroup $\Psi_{F_\alpha}$ of $\Phi$. Next we explain the relationship between the intrinsic convex subgroup $\Psi$ and these $\Psi_{F_\alpha}$.\par\noindent
\begin{lemma}\label{intbigger}
With the notations of this subsection, we have:
\begin{enumerate}
\item\label{enum:incl}$\Psi_{F_\alpha}\subseteq\Psi$.
\item\label{enum:eq} $\Psi_{F_\alpha}=\Psi$ if and only if $\overline{F_\alpha}(X')\in R/p_{\Psi_{F_\alpha}}[X']$ defines a non trivial Nagata extension of the local domain $R/p_{\Psi_{F_\alpha}}$.
\end{enumerate}
\end{lemma}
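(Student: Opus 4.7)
For part (\ref{enum:incl}), the plan is to use the identification $\nu(\delta_i) = \nu(F(\sigma_i))$ recorded after Definition~\ref{def:sequences}, applied to the polynomial $F_\alpha$. It gives $\nu(\delta_i^{(\alpha)}) = \nu(F_\alpha(\sigma_i^{(\alpha)})) = \nu(F(\sigma_i^{(\alpha)} + \alpha))$ for every $i \geq 1$, and a direct computation yields the same expression for $i=0$ with the convention $\sigma_0^{(\alpha)} := 0$. Since both $\sigma_i^{(\alpha)}$ and $\alpha$ lie in $m_R$, their sum is an element of $m_R$, so each $\nu(\delta_i^{(\alpha)})$ is one of the values used to define $\Psi$ and therefore lies in $\Psi$. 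The minimality of $\Psi_{F_\alpha}$ then yields $\Psi_{F_\alpha} \subseteq \Psi$.

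For part (\ref{enum:eq}), I would first verify that $\overline{F_\alpha}(X') \in R/p_{\Psi_{F_\alpha}}[X']$ is automatically a Nagata polynomial: its constant term $\overline{F(\alpha)}$ lies in the maximal ideal of $R/p_{\Psi_{F_\alpha}}$ because $F(\alpha) \in m_R$, and its linear coefficient $\overline{F'(\alpha)}$ is not in that ideal because $F'(\alpha) \equiv F'(0) \pmod{m_R}$ with $F'(0) \notin m_R$. By Remark~\ref{changeofvariable} the corresponding Nagata extension is non-trivial if and only if $\overline{F_\alpha}$ has no zero in $m_R/p_{\Psi_{F_\alpha}}$, equivalently, for every $b \in m_R$ we have $F_\alpha(b) \notin p_{\Psi_{F_\alpha}}$, i.e.\ $\nu(F_\alpha(b)) \in \Psi_{F_\alpha}$ (noting that $F_\alpha(b) \neq 0$ by the non-triviality of the original extension).

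To close the equivalence I would perform the change of variable $a = b + \alpha$: as $b$ ranges over $m_R$ so does $a$ (since $\alpha \in m_R$), and $F_\alpha(b) = F(a)$. The non-triviality condition therefore becomes $\nu(F(a)) \in \Psi_{F_\alpha}$ for every $a \in m_R$, which by the minimality of $\Psi$ is equivalent to $\Psi \subseteq \Psi_{F_\alpha}$; combined with part (\ref{enum:incl}) this gives $\Psi_{F_\alpha} = \Psi$. I do not anticipate any real obstacle: the argument is essentially a chase through the definitions via the translation $a \leftrightarrow b+\alpha$, and the only step requiring a small verification is that $\overline{F_\alpha}$ remains a Nagata polynomial after reduction modulo $p_{\Psi_{F_\alpha}}$.
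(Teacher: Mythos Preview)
Your proof is correct and follows essentially the same approach as the paper's: both arguments identify $\nu(\delta_i^{(\alpha)})$ with values of the form $\nu(F(a))$ for $a\in m_R$ to get~(\ref{enum:incl}), and then translate the triviality criterion of Remark~\ref{changeofvariable} through the substitution $a=b+\alpha$ (the paper phrases this via the contrapositive ``$\Psi_{F_\alpha}\subsetneq\Psi$ iff some $F_\alpha(a)\in p_{\Psi_{F_\alpha}}$'', but this is the same computation). Your extra verification that $\overline{F_\alpha}$ is a Nagata polynomial is harmless; the paper takes it for granted from the discussion preceding the lemma.
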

\begin{proof}
Statement \eqref{enum:incl} follows from the identities $\nu(\delta_0^{(\alpha)})=\nu(F(\alpha))$ and $\nu(\delta_i^{(\alpha)})=\nu(F(\sigma_i^{(\alpha)}+\alpha))$ for $i\geq1$. Having a strict inclusion $\Psi_{F_\alpha}\subsetneq\Psi$ is equivalent to the existence of $a\in m_R$ such that $F_\alpha(a)\in p_{\Psi_{F_\alpha}}$. In turn, this is equivalent to saying that the polynomial $\overline{F_\alpha}(X')\in R/p_{\Psi_{F_\alpha}}[X']$ has a root in $m_R/p_{\Psi_{F_\alpha}}$, which means that it defines a trivial Nagata extension of $R/p_{\Psi_{F_\alpha}}$.\end{proof}

\begin{definition}\label{def:goodvarible}
We say that $X'=X-\alpha$, where $\alpha\in m_R$, is a \emph{good variable} if the following conditions are satisfied:
\begin{enumerate}
\item\label{enum:verygood} $\Psi_{F_\alpha}=\Psi$.
\item\label{enum:firstvalue} $\Psi_{F_\alpha}$ is the smallest convex subgroup of $\Phi$ containing $\nu(\delta_0^{(\alpha)})$.
\end{enumerate}
\end{definition}

In the finite rank case, we can assume that our Nagata extension is of the form $S_\alpha$ with $\alpha\in m_R$ defining a good variable:

\begin{lemma}\label{good}
If the rank of $\nu$ is finite then there exists a good variable.
\end{lemma}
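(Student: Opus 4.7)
The plan is to construct a good variable in two stages: first adjust $\alpha$ so that condition~\eqref{enum:verygood} holds, then make a further change of variable to also arrange condition~\eqref{enum:firstvalue} without destroying~\eqref{enum:verygood}.

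\emph{Stage~1: achieving $\Psi_{F_\alpha}=\Psi$.} I would start with $\alpha=0$ and enlarge $\Psi_{F_\alpha}$ step by step. Suppose $\alpha\in m_R$ satisfies $\Psi_{F_\alpha}\subsetneq\Psi$. By Lemma~\ref{intbigger}.\eqref{enum:eq}, the reduction $\overline{F_\alpha}(X')\in (R/p_{\Psi_{F_\alpha}})[X']$ defines a trivial Nagata extension, hence by Remark~\ref{changeofvariable} it has a zero in $m_R/p_{\Psi_{F_\alpha}}$. Lifting produces $a\in m_R$ with $F_\alpha(a)\in p_{\Psi_{F_\alpha}}$, that is $\nu(F_\alpha(a))\notin\Psi_{F_\alpha}$. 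Replacing $\alpha$ by $\alpha+a\in m_R$ and using $F_{\alpha+a}(X')=F_\alpha(X'+a)$, the first Newton value satisfies $\nu(\delta_0^{(\alpha+a)})=\nu(F_\alpha(a))\notin\Psi_{F_\alpha}$. Since the convex subgroups of $\Phi$ are totally ordered by inclusion and both $\Psi_{F_\alpha}$ and $\Psi_{F_{\alpha+a}}$ sit inside $\Psi$ by Lemma~\ref{intbigger}.\eqref{enum:incl}, the only option is $\Psi_{F_\alpha}\subsetneq\Psi_{F_{\alpha+a}}\subseteq\Psi$. The finite rank hypothesis ensures that $\Psi$ contains only finitely many convex subgroups of $\Phi$, so this strictly increasing procedure terminates at an $\alpha$ with $\Psi_{F_\alpha}=\Psi$.

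\emph{Stage~2: arranging condition~\eqref{enum:firstvalue}.} With $\alpha$ as produced by Stage~1, Lemma~\ref{simple} applied to $F_\alpha$ gives an index $i_0\geq 0$ such that $\Psi=\Psi_{F_\alpha}$ is the smallest convex subgroup of $\Phi$ containing $\nu(\delta_{i_0}^{(\alpha)})$. I would set $\alpha^*=\alpha+\sigma_{i_0}^{(\alpha)}\in m_R$ (with the convention $\alpha^*=\alpha$ if $i_0=0$). The identity $F_{\alpha^*}(X')=F(X'+\alpha+\sigma_{i_0}^{(\alpha)})=F_\alpha^{(i_0)}(X')$ — that is, the Newton iterates of $F_\alpha$ are themselves translates of $F_\alpha$ — yields $\delta_j^{(\alpha^*)}=\delta_{i_0+j}^{(\alpha)}$ for every $j\geq 0$. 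In particular $\nu(\delta_0^{(\alpha^*)})=\nu(\delta_{i_0}^{(\alpha)})$ generates $\Psi$ as a convex subgroup, giving condition~\eqref{enum:firstvalue}. Moreover $\Psi_{F_{\alpha^*}}$ contains $\nu(\delta_0^{(\alpha^*)})$ and therefore contains $\Psi$; combined with $\Psi_{F_{\alpha^*}}\subseteq\Psi$ from Lemma~\ref{intbigger}.\eqref{enum:incl}, this gives $\Psi_{F_{\alpha^*}}=\Psi$, so condition~\eqref{enum:verygood} is preserved.

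The only genuinely nontrivial point is Stage~1; the engine is Lemma~\ref{intbigger}.\eqref{enum:eq}, which turns the strict inclusion $\Psi_{F_\alpha}\subsetneq\Psi$ into an explicit translation whose first Newton value escapes $\Psi_{F_\alpha}$. Finiteness of rank enters only to cut off the iteration; Stage~2 is then a bookkeeping step that exploits the fact that Newton's method is a change of variable.
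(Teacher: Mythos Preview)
Your proof is correct and follows essentially the same approach as the paper's. Both arguments split into the same two stages: first iteratively translate by lifts of roots of the reduced polynomial to force $\Psi_{F_\alpha}=\Psi$ (using finite rank to terminate), then shift further by a partial sum $\sigma_{i_0}^{(\alpha)}$ to arrange that the first Newton value already generates $\Psi$ as a convex subgroup. Your write-up is slightly more explicit than the paper's in invoking Lemma~\ref{intbigger}.\eqref{enum:eq} for Stage~1 and in verifying that condition~\eqref{enum:verygood} survives Stage~2, but the underlying ideas are identical.
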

\begin{proof}
We first concentrate on \eqref{enum:verygood} in Definition~\ref{def:goodvarible}. If it is satisfied for $\alpha=0$ then we are done. Assume that this is not the case. Then we can choose $a_1\in m_R$ such that $F(a_1)\in p_{\Psi_F}$. Set $\alpha_1=a_1$ and consider $F_{\alpha_1}(X')=F(X'+a_1)$. Since $\nu(\delta_0^{(\alpha_1)})=\nu(F(a_1))\notin\Psi_F$ and the convex subgroups of $\Phi$ are totally ordered by inclusion, it follows that $\Psi_F\subsetneq\Psi_{F_{\alpha_1}}$. If $\Psi_{F_{\alpha_1}}=\Phi$, then $\alpha_1$ satisfies \eqref{enum:verygood} and we stop. Otherwise we pick $a_2\in m_R$ such that $F_{\alpha_1}(a_2)$ belongs to $p_{\Psi_{F_{\alpha_1}}}$ and set $\alpha_2=\alpha_1+a_2$. Now consider the polynomial $F_{\alpha_2}(X')=F(X'+\alpha_2)$. We have $\nu(\delta_0^{(\alpha_2)})=\nu(F_{\alpha_1}(a_2))\notin\Psi_{F_{\alpha_1}}$ and, by the same argument as before, we get a chain $\Psi_F\subsetneq\Psi_{F_{\alpha_1}}\subsetneq\Psi_{F_{\alpha_2}}\subseteq\Phi$. After a finite number of iterations this process has to stop  because our assumption on its rank. But it cannot stop unless $\Psi_{F_\alpha}=\Psi$.

Suppose that $\Psi_{F}=\Psi$ but \eqref{enum:firstvalue} in Definition~\ref{def:goodvarible} is not satisfied. By Lemma~\ref{simple}, there exists $i_0>0$ such that $\Psi_{F}$ is the smallest convex subgroup of $\Phi$ containing $\nu(\delta_{i_0})$. Then $X'=X-\sigma_{i_0}$ is a good variable.\end{proof}

\begin{remark}
The assumption on the rank of the valuation is needed to guarantee that condition \eqref{enum:verygood} in Definition~\ref{def:goodvarible} can be achieved.
\end{remark}

\subsection{End of the proof of Theorem 1}\label{subsec:proof} Finally, we prove the main result of this section and obtain Theorem~\ref{mainthm}.\eqref{grouphext} as a corollary of it.\par\noindent
\begin{proposition}\label{computationNagata}
Let $\nu$ be a valuation centered in a local domain $R$ with value group $\Phi$. Let $R\to S$ be a Nagata extension and let $\tilde\nu$ be the unique valuation centered in $S/H_S(\nu)$ extending $\nu$ through the inclusion $R\subset S/H_S(\nu)$. Then, the value group of $\tilde\nu$ is $\Phi$.
\end{proposition}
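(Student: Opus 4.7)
My plan has three phases followed by an induction on rank.

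First (finite-rank reduction), I would express $R$ as the filtered union of local subdomains $A_0\subset R$ essentially of finite type over the prime ring and containing the coefficients of $F(X)$ and of any fixed $h(X)$. Such $A_0$ are noetherian, and by Abhyankar's inequality the restriction $\nu_0=\nu|_{A_0}$ has finite rank. The formation of $H_S(\nu)$ and $\tilde\nu$ is compatible with the filtered colimit (exactly as in the proof of Theorem~\ref{mainthm}.\eqref{hext}), so the value groups of $\nu$ and $\tilde\nu$ are the filtered unions of those of $\nu_0$ and $\tilde\nu_0$. Hence it suffices to treat the case in which $\nu$ has finite rank. Second (good variable normalization), assuming $\mathrm{rank}(\nu)<\infty$, I apply Lemma~\ref{good} and Remark~\ref{changeofvariable} to substitute $X\mapsto X+\alpha$ with $\alpha\in m_R$ without changing the Nagata extension; I may then assume $X$ itself is a good variable, so $\Psi_F=\Psi$ is the intrinsic convex subgroup and is the smallest convex subgroup of $\Phi$ containing $\nu(\delta_0)$. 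Since $\tilde\nu$ on $R[\sigma_\infty]_*$ is determined by its values on elements $h(\sigma_\infty)$ with $h(X)\in R[X]$ and $0<\deg h(X)<\deg F^*(X)$, the task reduces to showing each such $\tilde\nu(h(\sigma_\infty))$ lies in $\Phi$.

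Third, I would induct on $\mathrm{rank}(\nu)$. If $\Psi_F=\Phi$, the degree bound forces $F^*(X)\nmid h(X)$ in $K[X]$, so by Remark~\ref{rem:minprimes} the image of $h(X)$ in $S$ does not lie in $H_S(\nu)$; Lemma~\ref{quot} (with $p_{\Psi_F}=(0)$) gives that $(\nu(h(\sigma_i)))$ is eventually constant at some $\varphi\in\Phi$, and Corollary~\ref{approx} identifies $\tilde\nu(h(\sigma_\infty))=\varphi$. If $\Psi_F\subsetneq\Phi$, consider the residual valuation $\bar\nu_F$ on $R/p_{\Psi_F}$: by Lemma~\ref{intbigger}.\eqref{enum:eq}, $\bar F(X)$ is a non-trivial Nagata polynomial there, $\bar\nu_F$ has strictly smaller rank, and the inductive hypothesis applied to $(R/p_{\Psi_F},\bar\nu_F,\bar F(X))$ yields that $\tilde{\bar\nu}_F$ has value group $\Psi_F\subseteq\Phi$. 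The sub-case $\bar h(\bar\sigma_\infty)\neq 0$ in $L_F(\bar\sigma_\infty)$ is handled again by Lemma~\ref{quot} and Corollary~\ref{approx}, giving $\tilde\nu(h(\sigma_\infty))\in\Psi_F\subseteq\Phi$.

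The main obstacle is the remaining sub-case: $\Psi_F\subsetneq\Phi$ and $\bar h(\bar\sigma_\infty)=0$. Here Corollary~\ref{approx} only tells us that $(\nu(h(\sigma_i)))$ increases cofinally in $\widetilde{\Psi_F}$, without immediately placing $\tilde\nu(h(\sigma_\infty))$ inside $\Phi$. My plan is to exploit that $\bar h(\bar\sigma_\infty)=0$ is equivalent to $\bar F^*(X)\mid \bar h(X)$ in $L_F[X]$: lift this to a relation $h(X)=F^*(X)q(X)+r(X)$ over $\tilde R[X]$ (using Lemma~\ref{intclosure} to view $F^*(X)$ in $\tilde R[X]$), with the coefficients of $r(X)$ in $p_{\Psi_F}\tilde R$, so that $h(\sigma_\infty)=r(\sigma_\infty)$. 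Then factor out a coefficient of $r(X)$ whose class in $\Phi/\Psi_F$ is minimal, writing $h(\sigma_\infty)=c\cdot r_1(\sigma_\infty)$ with $\nu(c)\in\Phi$ and $\tilde\nu(r_1(\sigma_\infty))\in\widetilde{\Psi_F}$; the new task of computing $\tilde\nu(r_1(\sigma_\infty))$ falls under the residual valuation setting, where the rank has strictly dropped. Iterating this lifting (equivalently, passing through the finite chain of algebraic extensions of the fraction field with preserved value group that is outlined in the introduction) eventually reduces to the eventually-constant regime of the first sub-case. The finite-rank hypothesis guaranteed by phase one is precisely what ensures that this iteration terminates; organizing the chain of extensions while preserving value groups at every stage is the combinatorial heart of the argument.
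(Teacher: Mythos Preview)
Your overall architecture---finite-rank reduction, good-variable normalization, and an argument organized around the convex subgroup $\Psi_F$---is right, and your treatment of the cases $\Psi_F=\Phi$ and $\bar h(\bar\sigma_\infty)\neq 0$ matches the paper. The gap is in the remaining sub-case. There, your Euclidean division $h(X)=F^*(X)q(X)+r(X)$ is vacuous: since $\deg h<\deg F^*$ you get $q=0$ and $r=h$, so the claim ``coefficients of $r(X)$ in $p_{\Psi_F}\tilde R$'' is simply false. What you presumably intend is to divide by a lift $Q(X)\in R_\nu[X]$ of the minimal polynomial $\bar Q(X)$ of $\bar\sigma_\infty$ over $L_F$, which can have degree strictly less than $\deg F^*$ when $\bar F(X)$ is reducible. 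But even with that correction the conclusion $h(\sigma_\infty)=r(\sigma_\infty)$ is wrong, because $Q(\sigma_\infty)\neq 0$ in general ($Q$ is only a lift, not the minimal polynomial of $\sigma_\infty$ over $K$). Your subsequent step---factor out a coefficient $c$ and declare that the residual computation ``has strictly dropped rank''---does not make measurable progress: the new element $r_1(\sigma_\infty)$ still lives upstairs in $R[\sigma_\infty]_*$, your inductive hypothesis concerns $\tilde{\bar\nu}_F$ rather than $\tilde\nu$, and nothing prevents $\bar r_1(\bar\sigma_\infty)=0$ again, so you can cycle indefinitely in this sub-case.

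The paper's mechanism is not a reformulation of the lifting you sketch: instead of manipulating $h(X)$, it enlarges the \emph{base ring}. One adjoins to $R_\nu$ the distinguished root $\tau_\infty^{(0)}$ of $Q(X)$; this extension $R_\nu\hookrightarrow R_1=R_\nu[\tau_\infty^{(0)}]_*$ preserves the value group precisely because $\bar Q(X)$ is irreducible over $L_F$, so the already-settled case applies to $Q$. One then replaces $F$ by $F_1(X_1)=F(X_1+\tau_\infty^{(0)})$ over $R_1$ and checks that $\Psi_{F_1}\supsetneq\Psi_F$: writing $F=G_1Q+B$ with $B\in m_{\Psi_F}R_\nu[X]$, one has $F_1(0)=F(\tau_\infty^{(0)})=B(\tau_\infty^{(0)})$, whose value lies outside $\Psi_F$. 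Iterating produces a strictly increasing chain of convex subgroups of $\Phi$, which terminates by finite rank, and at termination every $h$ falls into the eventually-constant regime. Your parenthetical reference to ``the finite chain of algebraic extensions outlined in the introduction'' shows you know this ingredient exists, but the base-change step and the verification $\Psi_{F_1}\supsetneq\Psi_F$ are the heart of the argument and must actually be carried out.
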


\begin{proof}
Let $F(X)$ be a Nagata polynomial of degree $n$ defining the extension $R\to S$, which we assume to be non trivial.

\textit{Reduction to the case where $R$ is integrally closed:} Keep the notation introduced before Lemma~\ref{intclosure}. In view of the following commutative diagram (see \eqref{diagram1} in Remark~\ref{firstsimpl}),
\[\xymatrixcolsep{2pc}
\xymatrix@R-1pc{
R \ar@{^{(}->}[d] \ar@{^{(}->}[r] & S/H_S(\nu) \ar@{^{(}->}[d] \ar[r]^-{\simeq}& R[\si]_* \ar@{^{(}->}[d] \ar@{^{(}->}[r]& K(\si)\\
\tilde R \ar@{^{(}->}[r] & \tilde S \ar[r]^-{\simeq}& \tilde R[\si]_* \ar@{^{(}->}[ru]}
\]
where all the vertical maps are local ring homomorphisms, it suffices to show the result for the Nagata extension $\tilde R\to\tilde S$ and the same valuations $\nu$ and $\tilde\nu$.

\textit{From now on, $R$ is an integrally closed local domain.} By Lemma~\ref{intclosure} and our assumption on $R$, we can assume that $F(X)=F^*(X)$, which simplifies the notation in what follows. Any element of the value group of $\tilde\nu$ is of the form $\tilde\nu(a\,h(\si))$ where $a\in R\setminus\set{0}$ and $h(X)$ is a polynomial in $R[X]$ such that $0\leq\deg h(X)<n$. Therefore it is enough to prove that $\tilde\nu(h(\si))$ belongs to $\Phi$. If its degree is zero, then $\tilde\nu(h(\si))=\nu(h(\si))\in\Phi$; thus we may assume that $\deg h(X)>0$.

\textit{Reduction to case where $\nu$ is of finite rank:} Let $P_0$ be the prime ring in $R$, that is, $\Z/p\Z$ if $R$ is of characteristic $p$, and $\Z$ otherwise. The ring $R$ is the inductive limit of its $P_0$--subalgebras that are essentially of finite type. Since $R$ is integrally closed, one may restrict the inductive system to the integrally closed local subalgebras of $R$ that are essentially of finite type over $P_0$. This is because both $\Z$ and $\mathbf{F}_p$ are universally japanese (see \cite[Ch.~14, no.1]{L-M}) so that the integral closure of $R_0$ in its field of fractions is noetherian and its localization at the center of the valuation is again noetherian.  Let us consider a subalgebra $A_0$ of $R$ of this type and containing the coefficients of the polynomial $F(X)$. We call $K_0\subseteq K$ the fraction field of $A_0$.

The valuation $\nu$ induces a valuation $\nu_0$ centered in $A_0$ whose valuation ring is $R_{\nu_0}=R_\nu\cap K_0$. Since $A_0$ is essentially of finite type over $P_0$, the value group $\Phi_0$ of $\nu_0$ has finite rational rank. Indeed, this rank is bounded by the transcendence degree of $K_0$ over the fraction field of $P_0$ by Abhyankar's inequality (see \cite[Ch.~VI, \S~10, no.3, Cor.1]{B}). Therefore $\Phi_0$ has finite rank.

The polynomial $F(X)\in A_0[X]$ is a Nagata polynomial and, if $S_0$ is the Nagata extension of $A_0$ defined by $F(X)$ and  $\tilde\nu_0$ is the extension of $\nu_0$ to $S_0$ (note that $H_{S_0}(\nu_0)=(0)$, see Remark~\ref{firstsimpl}), we have that $R_{\tilde\nu_0}$ corresponds to $R_{\tilde\nu}\cap K_0(\si)$ and a commutative diagram as follows:
\[\xymatrix@R-1pc{
A_0 \ar@{^{(}->}[d] \ar@{^{(}->}[r] & S_0 \ar@{^{(}->}[d] \ar[r]^-{\simeq}& A_0[\si]_* \ar@{^{(}->}[d] \ar@{^{(}->}[r] & K_0(\si) \ar@{^{(}->}[d] \\
R \ar@{^{(}->}[r] & S \ar[r]^-{\simeq}& R[\si]_* \ar@{^{(}->}[r] & K(\si)}
\]
where all the vertical maps are local ring homomorphisms. Since given a finite number of polynomials in $R[X]$ one may assume that $A_0$ contains the coefficients of all the polynomials, we see that it is enough to prove the result in the case where the local domain is integrally closed and the rank of the valuation is finite.

\textit{Proof in the case where $R$ is integrally closed and $\nu$ is of finite rank:} According to Lemma~\ref{quot} and Corollary~\ref{approx}, if $\Psi_F=\Phi$ then $\tilde\nu(h(\si))=\nu(h(\sigma_i))$ for all $i$ large enough, and this ends the proof. Next we treat the case $\Psi_F\subsetneq\Phi$.

The polynomial $F(X)$ defines non trivial Nagata extensions of $R$ and $R_\nu$ that sit in a natural commutative diagram of local ring homomorphisms:
\begin{equation} \tag{$\text{D}_2$}
\begin{gathered}
\xymatrix@R-1pc{
R \ar@{^{(}->}[d] \ar@{^{(}->}[r] & S=R[\si]_* \ar@{^{(}->}[d] \ar@{^{(}->}[r]& R_{\tilde\nu}\\
R_\nu \ar@{^{(}->}[r] & R_\nu[\si]_* \ar@{^{(}->}[ru]&}
\end{gathered}
\label{diagram2}
\end{equation}

Every finitely generated ideal of $R_\nu$ is principal and generated by an element of its set of generators. Therefore $h(X)$ can be written in $R_\nu[X]$ as $h(X)=h_t H(X)$ with $h_t\in R_\nu\setminus\set{0}$ and $H(X)\in R_\nu[X]$ having a coefficient equal to one. 

Let $\bar\nu:=\bar\nu_{\Psi_F}$ be the residual valuation (corresponding to $\Psi_F$) with which $\nu$ is composed and let $L$ be the fraction field of its valuation ring $R_{\bar\nu}=R_\nu/m_{\Psi_F}$. Fix an algebraic closure of $L$ and write the Nagata extension defined by the image $\widebar F(X)$ of $F(X)$ in $R_{\bar\nu}[X]$ as $R_{\bar\nu}\hookrightarrow R_{\bar\nu}[\bar\sigma_\infty]_*$, where $\widebar F(\bar\sigma_\infty)=0$.

Let us first assume that $\widebar F(X)\in R_{\bar\nu}[X]$ is irreducible. Since $R_{\bar\nu}$ is integrally closed (recall that any valuation ring has this property, see \cite[Ch.~VI, \S~1, no. 3, Corollary 1]{B}) and the polynomial is monic, this is equivalent to being irreducible in $L[X]$. Therefore $\widebar F(X)$ is the minimal polynomial of $\bar\sigma_\infty$ over $L$. Then we finish the proof as follows:

On the one hand, the image $\widebar H(X)$ of $H(X)$ in $R_{\bar\nu}[X]$ is not zero because at least one of its coefficients equals $1$. On the other hand, we have $\deg\widebar H(X)\leq\deg h(X)<n=\deg\widebar F(X)$ and thus $\widebar H(\bar\sigma_\infty)\neq 0$. Applying Lemma~\ref{quot} and Corollary~\ref{approx} (to $R_\nu$, $F(X)$, and $H(X)$) we conclude that $\tilde\nu(H(\si))=\nu(H(\sigma_i))$ for all $i$ large enough. This fact and the relation between $h(X)$ and $H(X)$ show that, for large $i$, $\tilde\nu(h(\si))=\nu(h(\sigma_i))$.

Next we address the proof in the case where $\widebar F(X)$ is a multiple of the minimal polynomial of $\bar\sigma_\infty$ over $L$. This minimal polynomial has coefficients in $R_{\bar\nu}$ by Lemma~\ref{intclosure}.\eqref{enum:defminpol}. Let us write it as the image $\widebar Q(X)$ in $R_{\bar\nu}[X]$ of a monic polynomial $Q(X)\in R_\nu[X]$ with $\deg Q(X)=\deg\widebar Q(X)<n$. We have $\bar\nu(\widebar Q(0))>0$ and an equality $\widebar F(X)=\widebar G(X)\widebar Q(X)$ in $R_{\bar\nu}[X]$. Hence by Lemma~\ref{fac}, $\widebar Q(X)$ is a Nagata polynomial and $\widebar G(0)$ does not belong to the maximal ideal $m_{\bar\nu}=m_\nu/m_{\Psi_F}$ of $R_{\bar\nu}$. It is straightforward that $Q(X)\in R_\nu[X]$ is an irreducible Nagata polynomial. 

Let $(\epsilon_i)_{i\in\N}$ and $(\tau_i)_{i\geq1}$ be the Newton sequence of values and the sequence of partial sums attached to $Q(X)$. We can extend $\tilde\nu$ to the algebraic closure $\widebar K$ of $K$ and therefore consider the distinguished root $\tau^{(0)}_\infty\in\widebar K$ of $Q(X)$ satisfying $\tilde\nu(\tau^{(0)}_\infty)=\nu(\epsilon_0)>0$. Note that $Q(X)$ is the minimal polynomial of $\tau^{(0)}_\infty$. In addition, by Euclidean division (in the rings $R_\nu[X]$ and $R_{\bar\nu}[X]$),
\begin{equation}\label{B}
F(X)=G_1(X)Q(X)+B(X) 
\end{equation}
with $\widebar G_1(X)=\widebar G(X)$ and $\widebar B(X)=0$ in $R_{\bar\nu}[X]$, and thus $G_1(0)\notin m_\nu$ and $B(X)\in m_{\Psi_F}R_\nu[X]$. 

Since $\nu$ has finite rank, by Lemma~\ref{good} we can assume that $X$ is a good coordinate. Under this assumption we have $\Psi_F=\Psi$, and hence $\widebar Q(X)$ and $Q(X)$ both define non trivial Nagata extensions. Indeed, $\widebar Q(X)$ defines the same Nagata extension as $\widebar F(X)$ and, if $Q(a)=0$ for some $a\in m_R$, then $\nu(F(a))=\nu(B(a))\notin\Psi_F$, which is a contradiction.

Let $\Psi_Q$ (resp. $\Psi'$) be the convex subgroup (resp. the intrinsic convex subgroup) of $\Phi$ associated to $Q(X)$. Given $a\in m_R$, evaluating in $a$ the expression \eqref{B} above, and taking into account that $\nu(G_1(a))=0$ and that $\nu(B(a))$ is greater than any element in $\Psi_F$, we obtain that $\nu(F(a))=\nu(Q(a))$. On the one hand, by definition of $\Psi'$ and Lemma~\ref{intbigger}.\eqref{enum:incl}, we have  $\Psi_Q\subseteq\Psi'=\Psi=\Psi_F$. On the other hand, taking $a=0$ gives that $\nu(\delta_0)=\nu(\epsilon_0)$, and since $\Psi_F$ is the smallest convex subgroup containing $\nu(\delta_0)$, we get $\Psi_F\subseteq\Psi_Q$. This shows that $\Psi_Q=\Psi'=\Psi=\Psi_F$.

We write the Nagata extension defined by $Q(X)\in R_\nu[X]$ as $R_\nu\hookrightarrow R_1=R_\nu[\tau^{(0)}_\infty]_*\subset K(\tau^{(0)}_{\infty})$. Let $\nu_1$ be the valuation centered in $R_1$ which extends $\nu$ and denote by $R_{\nu_1}$ its valuation ring. Note that $R_{\nu_1}$ coincides with $R_{\tilde\nu}\cap K(\tau^{(0)}_\infty)$. Moreover, the value group of $\nu_1$ is the group $\Phi$. Indeed, we have $\bar\nu_{\Psi_Q}=\bar\nu_{F}$ and $\widebar Q(X)\in R_{\bar\nu}[X]$ irreducible, and we have seen before that the result is true in this case. The key point is that $\Psi_Q=\Psi_F$.

Let us now consider the polynomial $F_1(X_1)=F(X_1+\tau^{(0)}_\infty)\in R_1[X_1]$. It is still a Nagata polynomial of degree $n$ and vanishes at $\si^{(1)}=\si-\tau^{(0)}_\infty$. In fact, $F(X)$ is a Nagata polynomial in $R_1[X]$ and $\tau^{(0)}_\infty\in m_{R_1}$, so with the notation of Subsection~\ref{subsec:firststeps}, $F_1(X_1)$ is the polynomial $F_{\alpha}(X_1)$ for $\alpha=\tau^{(0)}_\infty$, $X_1=X-\alpha$. In addition, $\tilde\nu$ takes a positive value on $\si^{(1)}$ because $\tilde\nu(\si)$ and $\tilde\nu(\tau^{(0)}_\infty)$ are both positive (more precisely, we have $\tilde\nu(\si)=\nu(\delta_0)=\nu(\epsilon_0)=\tilde\nu(\tau^{(0)}_\infty)\in\Psi_{F,>0}$). Since $R_1$ is an integrally closed local domain (it is a Nagata extension of an integrally closed domain, see \cite[Proposition 7]{L}), the polynomial $F_1(X_1)$ determines the Nagata extension 
\[R_1\hookrightarrow S_1=R_1[\si^{(1)}]_*\subset K(\si,\tau^{(0)}_\infty),\]
which contains $\si$. The extension $R_\nu[\si]_*\hookrightarrow S_1$ therefore is a Nagata extension and, setting $h_1(X_1)=h(X_1+\tau^{(0)}_\infty)$, the element $h(\si)$ is mapped to $h_1(\si^{(1)})$ which is non zero because $\deg h(X)<n$ and $h(\si)$ and $h(\si^{(1)}+\tau^{(0)}_\infty)$ have the same image in the henselization of $R_{\nu}$. The valuation $\widetilde{\nu_1}$ centered in $S_1$ extending $\nu_1$ has valuation ring $R_{\widetilde{\nu_1}}=R_{\tilde\nu}\cap K(\si,\tau^{(0)}_\infty)$.

Figure \ref{Fig1} might help the reader to visualize this construction: we have a commutative diagram of local ring homomorphisms, where the last horizontal arrow corresponds to the Nagata extension defined by $F_1(X_1)$ seen as Nagata polynomial in $R_{\nu_1}[X_1]$.
\begin{figure}[!htbp]
\[\xymatrix@R-1pc{
R \ar@{^{(}->}[d]\ar@{^{(}->}[r]      & S=R[\si]_* \ar@{^{(}->}[d]\ar@{^{(}->}[rd]  \\
R_\nu \ar@{^{(}->}[d]\ar@{^{(}->}[r]  & R_\nu[\si]_* \ar@{^{(}->}[d]\ar@{^{(}->}[r]  &R_{\tilde\nu}\ar@{^{(}->}[d]\\
R_1=R_\nu[\tau^{(0)}_\infty]_* \ar@{^{(}->}[d]\ar@{^{(}->}[r] & S_1=R_1[\si^{(1)}]_* \ar@{^{(}->}[d]\ar@{^{(}->}[r] &  R_{\widetilde{\nu_1}}\\
R_{\nu_1} \ar@{^{(}->}[r] & R_{\nu_1}[\si^{(1)}]_* \ar@{^{(}->}[ru]
}\]
\caption{Commutative diagram constructed after one step.}\label{Fig1}
\end{figure}

Let $(\delta^{(1)}_i)_{i\in\N}$ and $(\sigma^{(1)}_i)_{i\geq1}$ be the Newton sequence of values and the sequence of partial sums attached to $F_1(X_1)$, respectively. Let $\Psi_{F_1}$ be the convex subgroup of $\Phi$ associated to $F_1(X_1)$. Since $\nu_1(\delta^{(1)}_0)=\nu_1(F_1(0))=\nu_1(F(\tau^{(0)}_\infty))=\nu_1(B(\tau^{(0)}_\infty))\notin\Psi_F$ and the set of convex subgroups of $\Phi$ is totally ordered by inclusion, we have  $\Psi_F\subsetneq\Psi_{F_1}\subseteq\Phi$. Note that \[\nu_1(\delta^{(1)}_0)=\tilde\nu(\si^{(1)})>\tilde\nu(\si)=\tilde\nu(\tau^{(0)}_\infty).\]

If $\Psi_{F_1}=\Phi$ then Lemma~\ref{quot} and Corollary~\ref{approx} imply that,
\begin{equation}\label{onestep}
    \tilde\nu(h(\si))=\widetilde{\nu_1}(h_1(\si^{(1)}))=\nu_1(h_1(\sigma^{(1)}_i))=\nu_1(h(\sigma^{(1)}_i+\tau^{(0)}_\infty))\text{ for all }i\text{ large enough.}
\end{equation}

Assume that $\Psi_{F_1}\subsetneq\Phi$. Let $\widebar{\nu_1}:={\overline{\nu_1}}_{\Psi_{F_1}}$ be the residual valuation (corresponding to $\Psi_{F_1}$) with which $\nu_1$ is composed. Suppose that the image $\overline{F_1}(X_1)\in R_{\overline{\nu_1}}[X_1]$ of $F_1(X_1)$ is irreducible (which in turn implies that $F_1(X_1)$ is the minimal polynomial of $\si^{(1)}$ over $K(\tau_\infty^{(0)})$). Then, writing $h_1(X_1)=h_{1,t_1} H_1(X_1)$ in $R_{\nu_1}[X_1]$, where $h_{1,t_1}$ is a non zero constant and $H_1(X_1)$ has a coefficient equal to one, and repeating the same arguments as before, we conclude that the statement \eqref{onestep} above also holds in this case. If $\overline{F_1}(X_1)$ is reducible, we lift its Nagata factor $\overline{Q_1}(X_1)$ to an irreducible Nagata polynomial $Q_1(X_1)\in R_{\nu_1}[X_1]$, which has a unique root of positive $\tilde\nu$-value $\tau^{(1)}_\infty\in\widebar K$ and repeat the construction.

Since $\nu_1$ has finite rank, we can assume that $X_1$ is a good variable. We produce a Nagata extension $R_{\nu_1}\hookrightarrow R_2=R_{\nu_1}[\tau^{(1)}_\infty]_*\subset K(\tau^{(0)}_\infty,\tau^{(1)}_\infty)$ such that again the valuation extends uniquely to a valuation $\nu_2$ centered in $R_2$ and the value group does not change (so it is equal to $\Phi$). The Nagata polynomial $F_2(X_2)=F_1(X_2+\tau_\infty^{(1)})\in R_2[X_2]$ determines a Nagata extension \[R_2\hookrightarrow S_2=R_2[\si^{(2)}]_*\subset K(\si,\tau^{(0)}_\infty,\tau^{(1)}_\infty)\] associated to the root of positive $\tilde\nu$-value $\si^{(2)}=\si^{(1)}-\tau^{(1)}_\infty=\si-\sum_{k=0}^1{\tau^{(k)}_\infty}$, and which contains $R_{\nu_1}[\si^{(1)}]_*$. We call $\widetilde{\nu_2}$ the valuation centered in $S_2$ extending $\nu_2$, $R_{\widetilde{\nu_2}}=R_{\tilde\nu}\cap K(\si,\tau^{(0)}_\infty,\tau^{(1)}_\infty)$. The element $h(\sigma_\infty)$ is written in $S_2$ as the non zero element $h_2(\si^{(2)})=h(\si^{(2)}+\sum_{k=0}^1{\tau^{(k)}_\infty})$. In addition, we have $\Psi_F\subsetneq\Psi_{F_1}\subsetneq\Psi_{F_2}\subseteq\Phi$ and $\nu_2(\delta_0^{(2)})=\tilde\nu(\si^{(2)})>\tilde\nu(\si^{(1)})=\tilde\nu(\tau_\infty^{(1)})$.

We now ask whether $\Psi_{F_2}=\Phi$, or $\Psi_{F_2}\subsetneq\Phi$ and we get an irreducible polynomial after reducing the coefficients of $F_2(X_2)$ modulo the prime ideal of $R_{\nu_2}$ corresponding to $\Psi_{F_2}$. If one of these conditions holds then, for all $i$ large enough, \[\tilde\nu(h(\si))=\widetilde{\nu_2}(h_2(\si^{(2)}))=\nu_2(h_2(\sigma^{(2)}_i))=\nu_2(h(\sigma^{(2)}_i+\sum_{k=0}^1{\tau^{(k)}_\infty});\] otherwise, we repeat the construction.

As we iterate this construction, the value groups of the valuations $\set{\nu_k}_{k\geq1}$ that we create remain equal to $\Phi$ which has a bounded rank, and the convex subgroups $\set{\Psi_{F_k}}_{k\geq1}$ that we determine grow strictly, so this process has to stop after finitely many steps, say $\ell\geq1$ steps. This proves that $\tilde\nu(h(\sigma_\infty))$ is the value of the image $h_\ell(\si^{(\ell)})=h(\si^{(\ell)}+\sum_{k=0}^{\ell-1}{\tau^{(k)}_\infty})$ of $h(\sigma_\infty)$ in a finite extension of $K$, for a uniquely defined extension $\tilde\nu_\ell$ of $\nu$; and it is also the $\nu_\ell$-value for large $i$ of $h(\sigma_i^{(\ell)}+\sum_{k=0}^{\ell-1}{\tau^{(k)}_\infty})$. The value group is preserved since by construction the value group of $\nu_\ell$ is the same as that of $\nu$.\end{proof}

The proof of Theorem~\ref{mainthm}.\eqref{grouphext} is now straightforward.\par\noindent
\begin{proof}[Proof of Theorem~\ref{mainthm}.\eqref{grouphext}]
Since the valuation $\nu$ extends uniquely to each $S/H_S(\nu)$ without changing the value group by Proposition~\ref{computationNagata}, the same is true for $R^h/H(\nu)=\bigcup_S{S/H_S(\nu)}$.\end{proof}

\begin{remark} If it was infinite, the sequence $\tau^{(0)}_\infty+\tau_\infty^{(1)}+\tau_\infty^{(2)}+\cdots+\tau_\infty^{(k)}$ built in the proof of Proposition~\ref{computationNagata} would be a pseudo--convergent sequence of elements of the maximal ideal of $R_{\nu}^h$ for the extension $\tilde\nu$ of $\nu$, which would have the property that the smallest convex subgroup containing $\tilde\nu (\tau_\infty^{(k+1)})$ is strictly larger than the one containing $\tilde\nu (\tau_\infty^{(k)})$. Let us say that such a sequence is \textit{pseudo--convergent in scales}. What we use here is that by Abhyankar's inequality (see \cite[Ch.~VI, \S~10, no. 3, Corollary 1]{B}) there can be no infinite such sequence in a field of bounded transcendence degree over a valued field with a value group of bounded rational rank, in our case the prime field with the trivial valuation. By construction, $\sigma_\infty$ is in fact represented in $S_\ell$ by a finite sum $\sigma_\infty= \tau^{(0)}_\infty+\tau_\infty^{(1)}+\tau_\infty^{(2)}+\cdots+\tau_\infty^{(\ell-1)}+\sigma_\infty^{(\ell)}$.
\par\noindent  Here and at the end of this paper we have only used the fact that the value group has finite rank. A class of rings which have this property for any valuation and are not all noetherian are the rings of \textit{finite valuative dimension} in the sense of Jaffard in \cite[Ch.~IV]{J}.
\end{remark}

We may summarize the conclusion of the preceding discussion as follows:\par\noindent
\begin{definition}\label{nu-irred}
Let $\nu$ be a valuation centered in a local domain $R$ and let $F(X)\in R[X]$ be a Nagata polynomial. Let $\Psi_F$ be the convex subgroup of the value group of $\nu$ attached to $F(X)$ as in Definition~\ref{def:noni} and let $p_{\Psi_F}$ be the corresponding prime ideal of $R$. We say that $F(X)$ is \textit{$\nu$-residually irreducible} if the image $\widebar F(X)\in R/p_{\Psi_F}[X]$ of $F(X)$ is irreducible in $L_F[X]$, where $L_F$ denotes the fraction field of $R/p_{\Psi_F}$. 
\end{definition}

This irreducibility implies that $\widebar F(X)$ defines a non trivial Nagata extension of $R/p_{\Psi_F}$ and by Lemma~\ref{intbigger} that $\Psi_F$ is the intrinsic convex subgroup of the Nagata extension defined by $F(X)$. Remembering the discussion in Remark \ref{partialanswer} and the factorization $h(X)=h_tH(X)$ used in the proof of Proposition \ref{computationNagata}, it also implies that for any polynomial $h(X)\in R[X]$ such that $0\leq\deg h(X)<\deg F^*(X)$, we have for large $i$ the equality ${\rm in}_\nu(h(\sigma_i))={\rm in}_{\tilde\nu}(h(\sigma_\infty))$.\par

The algorithm described above has the following consequence:\par\noindent
\begin{proposition}\label{nu-ext}
Let $\nu$ be a valuation of finite rank centered in an integrally closed local domain $R$. Given a Nagata extension $R\hookrightarrow R[\sigma_\infty]_*$ corresponding to a Nagata polynomial $F(X)\in R[X]$, there exist a local domain $R'$ dominating $R$, dominated by the henselization of $R_\nu$ and containing $\sigma_\infty$, to which the valuation $\nu$ extends uniquely to a valuation centered in $R'$ and with the same value group, and an element $a'\in R'$ such that the Nagata polynomial $F'(X)=F(X+a')\in R'[X]$ is $\tilde\nu$-residually irreducible.
\end{proposition}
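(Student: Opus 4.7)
The plan is to extract, from the proof of Proposition~\ref{computationNagata}, a finite tower of Nagata extensions culminating in the desired ring $R'$ and element $a'$. Starting with the initial data $(R_0,\nu_0,F_0,a'_0)=(R,\nu,F,0)$, I would iterate the following step-by-step construction. At stage $k$, first invoke Lemma~\ref{good}, applicable because $\nu_k$ has the same finite rank as $\nu$ (the value group being preserved at every stage), to change variables inside $R_k$ so that $\Psi_{F_k}$ becomes the intrinsic convex subgroup associated to $F_k$. If the reduction $\widebar{F_k}$ in $(R_k/p_{\Psi_{F_k}})[X]$ is already irreducible, stop. Otherwise, Lemma~\ref{fac} identifies an irreducible Nagata factor of $\widebar{F_k}$, which lifts to an irreducible Nagata polynomial $Q_k(X)\in R_{\nu_k}[X]$; adjoining its distinguished root $\tau^{(k)}_\infty \in R_\nu^h$ of positive $\tilde\nu$-value produces the integrally closed local ring $R_{k+1}=R_{\nu_k}[\tau^{(k)}_\infty]_* \subset R_\nu^h$. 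By the base case treated inside the proof of Proposition~\ref{computationNagata} (where $\widebar{Q_k}$ is irreducible), the valuation $\nu_k$ extends uniquely to a valuation $\nu_{k+1}$ centered in $R_{k+1}$ with the same value group $\Phi$, and I then set $a'_{k+1}=a'_k+\tau^{(k)}_\infty$ and $F_{k+1}(X)=F(X+a'_{k+1})$.

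The termination of this process in finitely many steps is the crucial point and is precisely the argument carried out at the end of the proof of Proposition~\ref{computationNagata}. One checks that at each stage $\Psi_{F_k}\subsetneq\Psi_{F_{k+1}}$, because the new Newton value $\nu_{k+1}(\delta_0^{(k+1)})=\nu_{k+1}(F(a'_{k+1}))$ lies outside $\Psi_{F_k}$ by construction of $\tau^{(k)}_\infty$ as the distinguished root of a Nagata factor modulo $p_{\Psi_{F_k}}$. Since $\Phi$ has finite rank, this strictly increasing chain of convex subgroups must stabilize at some $\ell\geq 0$, at which point either $\Psi_{F_\ell}=\Phi$ or the reduction $\widebar{F_\ell}$ modulo $p_{\Psi_{F_\ell}}$ is irreducible; in either case $F_\ell$ is $\nu_\ell$-residually irreducible in the sense of Definition~\ref{nu-irred}.

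Finally I would set $R'=R_\ell$, $a'=a'_\ell=\sum_{k=0}^{\ell-1}\tau^{(k)}_\infty\in R'$, $\tilde\nu=\nu_\ell$, and $F'(X)=F_\ell(X)=F(X+a')$. By construction $R'$ dominates $R$, embeds into $R_\nu^h$, and the decomposition $\sigma_\infty=a'+\sigma^{(\ell)}_\infty$ (with $\sigma^{(\ell)}_\infty$ the distinguished root of $F'$ in $R_\nu^h$) exhibits $\sigma_\infty$ inside the Nagata extension of $R'$ determined by $F'$. The main obstacle is confirming the strict growth of the intrinsic convex subgroups together with the compatibility of the successive good-variable changes across the tower; without the finite rank hypothesis on $\nu$ this growth could persist indefinitely, and one would at best obtain an infinite pseudo--convergent sequence in scales, not a finite algebraic extension. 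All remaining ingredients---the unique extension at each stage with unchanged value group, the integral closedness propagated by Nagata extensions, and the lifting of irreducible factors via Lemma~\ref{fac}---are direct applications of results already established earlier in this section.
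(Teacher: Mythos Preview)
Your proposal is correct and is exactly the paper's approach: the paper offers no separate proof, presenting the proposition simply as a consequence of ``the algorithm described above'' (namely the iterative construction in the proof of Proposition~\ref{computationNagata}), which you have faithfully unpacked. The only quibble is that your choice $R'=R_\ell$ does not literally contain $\sigma_\infty$; to match the statement verbatim take $R'=S_\ell=R_\ell[\sigma_\infty^{(\ell)}]_*$, which still has $a'\in R_\ell\subset R'$ and is dominated by $R_\nu^h$.
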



\section{Applications}
\subsection{Connected components of the Riemann--Zariski space}\label{RZ} In this subsection we apply Theorem~\ref{mainthm}.\eqref{hext} in the study of the Riemann--Zariski space $\RZ{(R)}$ of valuations centered in a local domain $R$.\par

We start with the following result which, in a slightly different formulation, is classical (see \cite[Theorem 5.14]{Ku2}). We give a proof in the spirit of this paper:\par\noindent
\begin{corollary}\label{V} The henselization of a valuation ring is a valuation ring with the same value group.
\end{corollary}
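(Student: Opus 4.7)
The plan is to invoke Theorem~\ref{mainthm} with $R=R_\nu$ itself and upgrade the conclusion from ``local domain with an extended valuation'' to ``valuation ring''. Since any valuation ring is integrally closed, Remark~\ref{firstsimpl} gives $H(\nu)=(0)$, so $R_\nu^h$ is a local domain; Theorem~\ref{mainthm} then furnishes a unique valuation $\tilde\nu$ centered in $R_\nu^h$ extending $\nu$, and by part~\eqref{grouphext} this $\tilde\nu$ has the same value group $\Phi$. Set $L:=\text{Frac}(R_\nu^h)$ and let $R_{\tilde\nu}\subset L$ denote the valuation ring of $\tilde\nu$. Since $R_\nu^h$ is dominated by $R_{\tilde\nu}$ we have $R_\nu^h\subseteq R_{\tilde\nu}$, so the remaining task is the reverse inclusion, which will simultaneously show that $R_\nu^h$ is a valuation ring with value group $\Phi$.

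My approach is to present $R_\nu^h$ as the filtered union of its Nagata extensions $S$ and to show that each such $S$ is already a valuation ring of $\text{Frac}(S)=K(\sigma_\infty)$. By Lemma~\ref{fac} I may assume $F(X)$ is irreducible, which makes $S$ a local domain (Remark~\ref{firstsimpl}); Theorem~\ref{mainthm} then gives a unique extension $\tilde\nu$ centered in $S$ with value group $\Phi$ (Proposition~\ref{computationNagata}). Since $R_\nu$ is a Pr\"ufer domain, a classical result says its integral closure $A$ in $K(\sigma_\infty)$ is a semi-local Pr\"ufer domain whose maximal ideals correspond bijectively to the extensions of $\nu$ to $K(\sigma_\infty)$; the uniqueness statement in Theorem~\ref{mainthm} picks out a single maximal ideal $\mathfrak{M}$ of $A$ contracting to $(m_{R_\nu},\sigma_\infty)$, and $A_{\mathfrak{M}}=R_{\tilde\nu}$ is a valuation ring. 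I would then identify $S=A_{\mathfrak{M}}$, using that the Nagata condition makes the image $\widebar F(X)\in (R_\nu/m_{R_\nu})[X]$ factor as $X\cdot Q(X)$ with $Q(0)$ a unit, so that $R_\nu\to S$ is \'etale at $(m_{R_\nu},\sigma_\infty)$.

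Once each Nagata extension is shown to be a valuation ring, the filtered colimit $R_\nu^h=\varinjlim S$ is a valuation ring of $L$ with valuation $\tilde\nu$ and value group $\Phi$, as required. The hard step will be the equality $S=A_{\mathfrak{M}}$: the containment $S\subseteq A_{\mathfrak{M}}$ is automatic, but the reverse requires verifying that elements of $A$ not already in $R_\nu[\sigma_\infty]$ become invertible in $S$ after localizing at $(m_{R_\nu},\sigma_\infty)$, which is precisely where the \'etale nature of Nagata extensions at the distinguished root should play the essential role.
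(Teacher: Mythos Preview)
Your outline is sound, and the step you flag as ``hard'' is in fact easy once you use one more ingredient from the paper: a Nagata extension of an integrally closed local domain is again integrally closed (see \cite[Proposition~7]{L}, invoked in the proof of Proposition~\ref{computationNagata}). Since $R_\nu$ is integrally closed, so is $S$; hence the integral closure $A$ of $R_\nu$ in $K(\sigma_\infty)$ satisfies $A\subseteq S$. With $\mathfrak M:=m_S\cap A$ (maximal in $A$ because $\mathfrak M\cap R_\nu=m_{R_\nu}$ and $A$ is integral over $R_\nu$), both inclusions $S\subseteq A_{\mathfrak M}$ and $A_{\mathfrak M}\subseteq S$ are immediate, giving $S=A_{\mathfrak M}$. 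Since $A$ is a Pr\"ufer domain this is a valuation ring, and passing to the colimit finishes your argument. You do not actually need the uniqueness clause of Theorem~\ref{mainthm} to single out $\mathfrak M$: once $A\subseteq S$ and $S$ is local, the ideal $m_S\cap A$ is the only candidate.

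The paper's proof uses the same two ingredients---integral closedness and the uniqueness in Theorem~\ref{mainthm}---but applies them once at the level of $R_\nu^h$ rather than at each Nagata stage. It observes that $R_\nu^h$ itself is integrally closed (\cite[Theorem~13.12]{L-M} or \cite[Theorem~18.6.9]{EGA}), hence equals the intersection of all valuation rings of $K^h$ dominating it; Theorem~\ref{mainthm}.\eqref{hext} says there is exactly one such ring, so $R_\nu^h$ must equal it. This is shorter and avoids the detour through Pr\"ufer domains, but your approach has the mild advantage of exhibiting each finite stage $S$ explicitly as a valuation ring.
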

\begin{proof}
A valuation ring $R_\nu$ is integrally closed, so the henselization $R^h_\nu$ of $R_\nu$ is a local domain (and there is no minimal prime to consider, see Remark~\ref{firstsimpl}) and it is integrally closed (see \cite[Theorem 13.12]{L-M} or \cite[Theorem 18.6.9]{EGA}). As such, according to \cite[Ch.~VI, \S~1, no. 3, Theorem 3]{B}, $R^h_\nu$ is the intersection of all the valuation rings of $K^h$ which dominate it, where $K^h$ is the fraction field of $R^h_\nu$. By the uniqueness of the extension of the valuation $\nu$ (Theorem~\ref{mainthm}.\eqref{hext}), among all the valuation rings of $K^h$ which dominate $R_\nu$ there is only one which also dominates $R_\nu^h$, so $R^h_\nu$ is this valuation ring. The fact that the value group is the same follows from Theorem~\ref{mainthm}.\eqref{grouphext}.\end{proof}

Let $R$ be a local domain and let $\RZ{(R)}$ be the space of valuations centered in $R$. If $K$ is the fraction field of $R$, then $\RZ{(R)}$ consist of the set of all valuation rings of $K$ which dominate $R$ endowed with the Zariski topology (see \cite[Ch.~VI, \S~17]{ZS}). This topology is obtained by taking as a basis of open sets the subsets $U(A)$, whose elements are the valuation rings of $K$ dominating $R$ and containing $A$, where $A$ ranges over the family of all finite subsets of $K$.\par\noindent
\begin{corollary}\label{homeoRZ}
Let $R$ be a local domain and let $\{H_\iota\}_{\iota\in I}$ be the set of minimal primes of $R^h$. Let $\varphi:\RZ{(R)}\to\bigsqcup_{\iota\in I}\RZ(R^h/H_\iota)$ be the map which to a valuation ring $R_\nu\in\RZ{(R)}$ associates the minimal prime $H(\nu)$ of $R^h$ and the valuation ring $R_{\tilde\nu}\in\RZ{(R^h/H(\nu))}$ of the extension $\tilde\nu$ of $\nu$ to $R^h/H(\nu)$. Then, the map $\varphi$ satisfies the following:
\begin{enumerate}
    \item It is a homeomorphism.
    \item It induces a bijection between the set of connected components of $\RZ{(R)}$ and $\{H_\iota\}_{\iota\in I}$.
\end{enumerate}
\end{corollary}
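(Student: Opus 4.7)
My plan is to establish the bijection, then the homeomorphism property, and finally the connectedness of each piece $\RZ{(R^h/H_\iota)}$, which together give both assertions. First, well-definedness and bijectivity: the map $\varphi$ is well-defined by Theorem~\ref{mainthm}.\eqref{hext}; injectivity follows because $\tilde\nu$ is constructed as an extension through $R\subset R^h/H(\nu)$ and so restricts to $\nu$ on $K$; surjectivity follows because for any minimal prime $H_\iota$ of $R^h$, Remark~\ref{rem:minprimes}.\eqref{enum:flat} gives $H_\iota\cap R=(0)$, so any $\mu\in\RZ{(R^h/H_\iota)}$ restricts via $R\hookrightarrow R^h/H_\iota$ to some $\nu\in\RZ{(R)}$, and the uniqueness clause of Theorem~\ref{mainthm}.\eqref{hext} then forces $(H(\nu),\tilde\nu)=(H_\iota,\mu)$.

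For the homeomorphism I would argue as follows. Continuity of $\varphi^{-1}$ is transparent: a basic open $U(B)\subset\RZ{(R)}$ for a finite set $B\subset K$ pulls back to $\bigsqcup_\iota U_\iota(B)$, where $U_\iota(B)$ is the analogous basic open in $\RZ{(R^h/H_\iota)}$, well-defined since $B\subset K\subset\text{Frac}(R^h/H_\iota)$. For continuity of $\varphi$ itself I would first show that each $V_\iota:=\{\nu\in\RZ{(R)}\mid H(\nu)=H_\iota\}$ is clopen: distinct minimal primes of a Nagata extension $S$ are separated by an element $s\in S$, and Lemma~\ref{quot} together with Corollary~\ref{approx} translates the condition that the image of $s$ lies in $H_S(\nu)$ into an open statement about the Newton--Hensel values $\nu(h(\sigma_i))$. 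Then, working inside $V_\iota$, a basic open $U_\iota(\tilde A)$ with $\tilde A\subset\text{Frac}(R^h/H_\iota)$ finite is handled by writing each $\tilde a=\tilde b/\tilde c$ with $\tilde b,\tilde c\in R^h/H_\iota$, lifting to a Nagata extension $S$, and invoking Proposition~\ref{computationNagata} to rewrite the inequality $\tilde\nu(\tilde b)\geq\tilde\nu(\tilde c)$ as an open $\nu$-condition. The hard part will be arranging this description uniformly as $\nu$ varies over $V_\iota$, since the finite iterative procedure of Subsection~\ref{subsec:proof} depends on $\nu$; one needs a local-on-$\RZ{(R)}$ version of that construction.

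Finally, for the decomposition into connected components: since the connected components of a disjoint union are the disjoint unions of the connected components of the summands, once the homeomorphism is in hand the second statement reduces to showing $\RZ{(R^h/H_\iota)}$ connected for each $\iota$. Setting $A:=R^h/H_\iota$, a henselian local domain (being the quotient of the henselian local ring $R^h$ by a prime), I would let $\tilde A$ denote the integral closure of $A$ in an algebraic closure $\overline{\text{Frac}(A)}$. Since $A$ is henselian and $\tilde A$ is a domain, $\tilde A$ is local (finite integral extensions of a henselian local domain are products of henselian locals, and a domain forces a single factor). The uniqueness of extensions of valuations over a henselian base (which is Theorem~\ref{mainthm}.\eqref{hext} applied to $A$, where $(0)$ is the only minimal prime of $A^h=A$) then identifies $\RZ{(A)}$ with the space of valuations of $\overline{\text{Frac}(A)}$ dominating $\tilde A$, and I would invoke the classical connectedness of the Riemann--Zariski space of an algebraic closure dominating a local integrally closed domain. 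This classical input is the most delicate ingredient, and is where a separate argument (for instance by specialization to rank one valuations, each connecting to a common generization) must enter.
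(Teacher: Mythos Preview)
Your bijectivity argument is fine and matches the paper in spirit. However, there are two genuine gaps.

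\textbf{Continuity of $\varphi$.} You correctly identify the difficulty: the iterative procedure of Subsection~\ref{subsec:proof} depends on $\nu$ through the convex subgroups $\Psi_{F_k}$, the choice of good variables, and the auxiliary polynomials $Q_k$, so there is no evident way to obtain a uniform open description of $\tilde\nu(\tilde b)\geq\tilde\nu(\tilde c)$ as $\nu$ varies. The paper avoids this computation entirely. It first sets up a bijection between the minimal primes $H_\iota$ of $R^h$ and the maximal ideals $\widebar m_\iota$ of the integral closure $\widebar R$ of $R$ in $K$, so that $\RZ(R)=\bigsqcup_\iota\RZ(\widebar R_{\widebar m_\iota})$ with matching subspace topologies. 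Then, using the chain $R^h/H_\iota\hookrightarrow(\widebar R_{\widebar m_\iota})^h\hookrightarrow\widebar{R^h/H_\iota}$, it identifies $\RZ(R^h/H_\iota)$ with $\RZ((\widebar R_{\widebar m_\iota})^h)$ and cites \cite[Proposition~3.4]{F} for the homeomorphism $\RZ(\widebar R_{\widebar m_\iota})\to\RZ((\widebar R_{\widebar m_\iota})^h)$. No uniformization of the Newton--Hensel algorithm is needed.

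\textbf{Connectedness.} Your argument for the connectedness of $\RZ(A)$ with $A=R^h/H_\iota$ does not work. Theorem~\ref{mainthm}.\eqref{hext} applied to the henselian domain $A$ is vacuous, since $A^h=A$; it does \emph{not} say that a valuation centered in $A$ extends uniquely to an algebraic closure of $\text{Frac}(A)$. You are conflating ``henselian local domain'' with ``henselian valued field''. So the identification of $\RZ(A)$ with valuations of $\overline{\text{Frac}(A)}$ dominating $\tilde A$ is not a bijection, and the ``classical connectedness'' you invoke is neither classical nor stated correctly. The paper instead observes that each $\widebar R_{\widebar m_\iota}$ is unibranch and invokes Temkin's result \cite[Theorem~2.4.2]{Tm} that $\RZ$ of a unibranch local domain is connected.
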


\begin{proof}
Let $K$ be the fraction field of $R$ and let $\widebar R$ be the the integral closure of $R$ in $K$. Any maximal ideal of $\widebar R$ is of the form $m_\nu\cap\widebar R$ for some valuation ring $R_\nu$ of $K$ dominating $R$ (see \cite[Ch.~VI, \S~1, no. 3, Theorem 3]{B}). The ideal $H(\nu)$ of $R^h$ associated to $\nu$ then appears as the kernel of the canonical map $R^h\rightarrow {\tilde R}^h$, where $\tilde R=\widebar R_{m_\nu\cap\widebar R}$. Indeed, taking inductive limits of Nagata extensions in the commutative diagram obtained by combination of \eqref{diagram1} and the diagram \eqref{diagram2} for $\tilde R$, yields that $R^h\to R_\nu^h$ can be written as the injection ${\tilde R}^h\hookrightarrow R_\nu^h$ composed with $R^h\to{\tilde R}^h$. This defines a map from the set of maximal ideals of $\widebar R$ to $\{H_\iota\}_{\iota\in I}$. We now define its inverse map.

As in Remarks~\ref{rem:minprimes}.\eqref{enum:flat}, the fact that $R^h$ is flat over $R$ implies that for each $\iota\in I$ we have $H_\iota\cap R=(0)$. Since the natural composed map $p_\iota:R\to R^h\to R^h/H_\iota$ is injective, 
it induces an injection $\widebar R\hookrightarrow\widebar{R^h/H_\iota}$, where again the bar means integral closure. The maximal ideal of the domain $\widebar{R^h/H_\iota}$, which is local by \cite[Ch.~IX, Corollaire 1]{R}, induces a maximal ideal, say ${\widebar m}_\iota$, of $\widebar R$. We associate to $H_\iota$ this ${\widebar m}_\iota$. Denoting by $\widebar R_{\widebar m_\iota}$ the localization of $\widebar R$ at $\widebar m_\iota$, we have injections $R\hookrightarrow\widebar{R}_{{\widebar m}_\iota}\hookrightarrow\widebar{R^h/H_\iota}$ and a commutative diagram of local ring maps considering these maps and $R^h/H_\iota\hookrightarrow\widebar{R^h/H_\iota}$ composed with $p_\iota$. Since the integral closure of a henselian local domain is a henselian local domain as an inductive limit of finite algebras (see \cite[Ch.~IV, \S~18, Theorem 18.5.11 and Proposition 18.6.14]{EGA}), $\widebar{R^h/H_\iota}$ is henselian and by the universal property of henselization the natural map $R^h\to\widebar{R^h/H_\iota}$ factors uniquely through the map $(\widebar{R}_{{\widebar m}_\iota})^h\rightarrow\widebar{R^h/H_\iota}$. This map is injective because its kernel should have intersection zero with $\widebar R_{\widebar m_\iota}$ and therefore by \cite[Ch.~V, \S~2, no. 1, Corollary 1]{B} should be a minimal prime of the domain $(\widebar{R}_{{\widebar m}_\iota})^h$. This shows that $H_\iota$ is indeed the kernel of the map $R^h\rightarrow(\widebar R_{\widebar m_\iota})^h$. Observe also that we have local ring maps $R^h/H_\iota\hookrightarrow(\widebar R_{\widebar m_\iota})^h\hookrightarrow\widebar{R^h/H_\iota}$. Hence if $R_{\tilde\mu}\in\RZ{(R^h/H_\iota)}$, then $R_{\tilde\mu}$ dominates $\widebar{R^h/H_\iota}$ and the valuation ring $R_\mu=R_{\tilde\mu}\cap K$ of $K$ belongs to $\RZ{(\widebar R_{\widebar m_\iota})}$ and $m_\mu\cap\widebar R=\widebar m_\iota$ (since $\widebar m_\iota\overline R_{\widebar m_\iota}\cap\widebar R=\widebar m_\iota$).

Thus, we have established a bijection between $\{H_\iota\}_{\iota\in I}$ and the set of maximal ideals of $\widebar R$, and in what follows we write this last set as $\{{\widebar m}_\iota\}_{\iota\in I}$ with $\widebar m_\iota$ corresponding to $H_\iota$.

As a set $\RZ{(R)}$ is the disjoint union of the family of subsets $\{\RZ{(\widebar R_{\widebar m_\iota})}\}_{\iota\in I}$. To prove that they are homeomorphic we observe that the Zariski topology on any $\RZ{(\widebar R_{\widebar m_\iota})}$ coincides with the topology induced by the topology of $\RZ{(R)}$. In addition, the local integral domains $\widebar R_{\widebar m_\iota}$ are unibranch and we can apply \cite[Theorem 2.4.2]{Tm} which ensures the connectedness of the spaces $\RZ{(\widebar R_{\widebar m_\iota})}$. So, in order to finish the proof, it suffices to take $\iota\in I$ and show that the bijective map $\varphi_\iota$ from 
$\RZ{(\widebar R_{\widebar m_\iota})}$ to $\RZ(R^h/H_\iota)$ induced by $\varphi$ is a homeomorphism.

On the one hand, the henselization $(\widebar R_{\widebar m_\iota})^h$ is a local domain (because $\widebar R_{\widebar m_\iota}$ is an integrally closed local domain, see Remark~\ref{firstsimpl}), and therefore by \cite[Proposition 3.4]{F}, the map from $\RZ{(\widebar R_{\widebar m_\iota})}$ to $\RZ((\widebar R_{\widebar m_\iota})^h)$ which sends a valuation to its unique extension is a homeomorphism. Note that in \cite[Proposition 3.4]{F} the noetherianity and excellence assumptions on the local ring are only needed to show that the previous map is well defined and bijective. On the other hand, as we have seen above, we have local ring maps $R^h/H_\iota\hookrightarrow(\widebar R_{\widebar m_\iota})^h\hookrightarrow\widebar{R^h/H_\iota}$ and hence $\RZ{((\widebar R_{\widebar m_\iota})^h)}$ coincides with $\RZ(R^h/H_\iota)$. These facts imply that $\varphi_\iota$ is a homeomorphism.\end{proof}

\subsection{Approximation of Henselian elements}\label{sec5}
In this subsection, instead of considering extensions of all the valuations centered in a local domain to its henselization, we study the extension of the valuation of a valuation ring to its henselization; we revisit a result of Franz-Viktor Kuhlmann in \cite[Theorem 1.1]{Ku}. This result concerns the approximation of elements of the henselization $(K^h,\tilde\nu)$ of a valued field $(K,\nu)$ by elements of $K$ and we can state it as follows since we know by Corollary~\ref{V} that $R^h_\nu=R_{\tilde\nu}$ and the value groups are equal:\par\noindent
\begin{theorem}{\rm (Kuhlmann)}\label{ap} Let $K$ be a field endowed with a valuation $\nu$ determined by the valuation ring $R_\nu$ and let $\Phi$ be the value group of $\nu$. Let $ K^h$ be the field of fractions of the henselization $R^h_\nu=R_{\tilde\nu}$ of $R_\nu$. For every element $z\in K^h\setminus K$ there exist a convex subgroup $\Psi$ of $\Phi$ and an element $\varphi\in\Phi$ such that $\varphi+\Psi$ is cofinal in the ordered set 
\[\tilde\nu (z-K)=\{\tilde\nu(z-c)\,\vert\,c\in K\}\subset\Phi.\]
\end{theorem}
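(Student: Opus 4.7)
The plan is to reduce the statement to the asymptotic analysis developed in Section~\ref{sec3}. By Corollary~\ref{V}, $R_\nu^h=R_{\tilde\nu}$ shares its value group $\Phi$ and its residue field with $R_\nu$, so after translating $z$ by an element of $K$---which merely shifts $\tilde\nu(z-K)$ by a constant---I may assume $\tilde\nu(z)>0$. Since $R_\nu$ is integrally closed, each $H_S(\nu)=(0)$ by Remark~\ref{firstsimpl}, and $z$ lies in some Nagata extension $R_\nu[\sigma_\infty]_\ast\subset K^h$ defined by an irreducible Nagata polynomial $F(X)\in R_\nu[X]$. I then write $z=h(\sigma_\infty)$ for some $h(X)\in R_\nu[X]$ with $0<\deg h<\deg F$, and let $(\delta_i)_{i\in\N}$ and $(\sigma_i)_{i\ge 1}$ be the Newton sequence of values and the sequence of partial sums attached to $F$.

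\textbf{Candidate pair and the easy half.} Proposition~\ref{asym} produces $i_0\in\N$ and $k\in\{1,\dots,\deg h\}$ satisfying
\[
\tilde\nu\bigl(z-h(\sigma_i)\bigr)=\tilde\nu(h_k(\sigma_\infty))+k\,\nu(\delta_i)\qquad\text{for all }i\ge i_0.
\]
My candidate is $\varphi:=\tilde\nu(h_k(\sigma_\infty))\in\Phi$---an element of $\Phi$ by Theorem~\ref{mainthm}.\eqref{grouphext}---and $\Psi:=\Psi_F$ (Definition~\ref{def:noni}). Since every $h(\sigma_i)$ lies in $R_\nu\subseteq K$, the right-hand side above already belongs to $\tilde\nu(z-K)$, and Lemma~\ref{cofinal} says that the values $k\,\nu(\delta_i)$ are cofinal in $\Psi$. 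This exhibits a cofinal sequence of $\tilde\nu(z-K)$-values inside $\varphi+\Psi$, giving one half of the cofinality.

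\textbf{The hard direction and the main obstacle.} What remains is to show that no $c\in K$ produces $\tilde\nu(z-c)$ lying strictly above the whole coset $\varphi+\Psi$. Elements $c$ with $\nu(c)<0$ satisfy $\tilde\nu(z-c)=\nu(c)<0\le\varphi$, so I reduce to $c\in R_\nu$ and set $g(X):=h(X)-c\in R_\nu[X]$; note $g_m=h_m$ for all $m\ge 1$. Applying Corollary~\ref{approx} to $g(\sigma_\infty)=z-c$ yields either a stationary value $\tilde\nu(z-c)=\nu(g(\sigma_i))$ for large $i$, or the asymptotic identity $\nu(g(\sigma_i))=\tilde\nu(h_{k'}(\sigma_\infty))+k'\nu(\delta_i)$ for some $k'\ge 1$. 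Ultrametric comparison of the decomposition $z-c=(z-h(\sigma_i))-(c-h(\sigma_i))$ with these formulas forces $\tilde\nu(z-c)-\varphi$ into the convex closure of $\Psi$, as long as $F$ is $\nu$-residually irreducible (Definition~\ref{nu-irred}). A hypothetical $c\in K$ with $\tilde\nu(z-c)$ overshooting $\varphi+\Psi$ would exhibit $c$ as a pseudo-limit of $(h(\sigma_i))$ in $K$ living in a convex subgroup strictly larger than $\Psi_F$. The main obstacle lies in the case when $F$ is not $\nu$-residually irreducible: the remedy is to iterate the algorithm of Proposition~\ref{computationNagata} along a strictly increasing chain $\Psi_F\subsetneq\Psi_{F_1}\subsetneq\cdots\subsetneq\Psi_{F_\ell}$ of convex subgroups and to replace $(\varphi,\Psi)$ by $(\tilde\nu({h_\ell}_{k_\ell}(\sigma^{(\ell)}_\infty)),\Psi_{F_\ell})$. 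The delicate point I expect to require the most care is that at each level the approximating elements must still come from $K$ rather than from the intermediate fields produced by the algorithm, so one must verify that no element of $K$ can exceed this enlarged cofinality barrier.
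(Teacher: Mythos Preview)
Your outline follows the paper's strategy in its broad strokes, but two preliminary points need repair and the decisive step is left unexecuted. First, an element of the Nagata extension $R_\nu[\sigma_\infty]_*$ is a priori a fraction $h(\sigma_\infty)/q(\sigma_\infty)$ with $q(0)\notin m_\nu$, not simply $h(\sigma_\infty)$ with $h\in R_\nu[X]$; the paper keeps the fraction and works with the auxiliary polynomial $H(X)=h(\sigma_\infty)q(X)-h(X)q(\sigma_\infty)$. Second, your ``easy half'' only exhibits a sequence in $\tilde\nu(z-K)$ that is cofinal in $\varphi+\Psi$, whereas the statement requires the full inclusion $\varphi+\Psi\subset\tilde\nu(z-K)$. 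The paper closes this by an ultrametric trick: given $\psi\in\Psi$, pick $c\in K$ with $\nu(c)=\varphi+\psi$, choose $i$ with $\tilde\nu(z-h(\sigma_i)/q(\sigma_i))>\varphi+\psi$, and observe that $z-(h(\sigma_i)/q(\sigma_i)-c)$ then has value exactly $\varphi+\psi$.

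The genuine gap is precisely the one you flag at the end. When $F$ is not $\nu$-residually irreducible, iterating Proposition~\ref{computationNagata} produces approximants $\sigma_i^{(\ell)}+\sum_k\tau_\infty^{(k)}$ lying in auxiliary extensions of $K$, and you give no argument bounding $\tilde\nu(z-c)$ for $c\in K$ from this data. The paper does \emph{not} iterate inside $R_\nu$. Instead it first descends to a normal subring $R_0\subset R_\nu$ essentially of finite type over the prime ring (so that $\nu_0$ has finite rank), and then invokes Proposition~\ref{nu-ext} once to arrange that for \emph{every} $P\in R_0[X]$ of degree $<\deg F$ one has ${\rm in}_{\tilde\nu}P(\sigma_\infty)={\rm in}_\nu P(\sigma_i)$ for large $i$. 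With this established up front, the hard direction is immediate: for $c\in R_0$ (one enlarges $R_0$ to contain $c$), applying the initial-form equality to $h_{(c)}(X)=h(X)-cq(X)$ yields $\tilde\nu(z-c)<\tilde\nu\bigl(z-h(\sigma_i)/q(\sigma_i)\bigr)\in\varphi+\Psi$. The descent to a noetherian $R_0$ is what unlocks the finite-rank machinery of Section~\ref{sec3} and replaces your open-ended iteration by a single reduction.
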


Before giving the proof of this result, let us come back to the nature of the growth of the $\nu(h(\sigma_i))$, which is also a consequence of Corollary~\ref{approx}, but here we see directly that the coefficient of ${\rm in}_\nu\delta_i^e$ is the initial form of an element of $R_\nu$.\par\noindent
\begin{proposition}\label{ae} With the notations of Proposition~\ref{asym}, given a Nagata extension of the valued local domain $R$ and a polynomial $h(X)\in R[X]$, there exist $a\in R_\nu$ and $e\in\N,\ 0\leq e\leq {\rm deg}h(X)$ such that for all $i$ large enough we have the equality ${\rm in}_\nu h(\sigma_i)={\rm in}_\nu(a\delta_i^e)$.
\end{proposition}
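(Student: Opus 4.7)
The plan is to split on the dichotomy provided by Corollary~\ref{approx} and, in each case, exhibit $a$ explicitly from values that $h$ already takes along the sequence $(\sigma_i)$.

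If $(\nu(h(\sigma_i)))_{i\ge 1}$ is eventually constant, Corollary~\ref{approx} gives ${\rm in}_{\tilde\nu}(h(\si))={\rm in}_\nu(h(\sigma_i))$ in ${\rm gr}_{\tilde\nu}R_{\tilde\nu}$ for all $i$ large enough. Since these initial forms are all equal for large $i$, taking $e:=0$ and $a:=h(\sigma_{i_0})\in R$ for any fixed sufficiently large $i_0$ satisfies the conclusion.

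If instead the sequence is strictly increasing for large $i$, Corollary~\ref{approx} provides an integer $k$ with $1\le k\le\deg h(X)$ such that ${\rm in}_\nu h(\sigma_i)=-{\rm in}_{\tilde\nu}(h_k(\si)(-1)^k\eta_i^k)$ for $i$ large. The key observation is that $\eta_i$ and $\delta_i$ have the same $\tilde\nu$-initial form, because $\eta_i-\delta_i=\eta_{i+1}$ has strictly larger $\tilde\nu$-value than $\eta_i$. This lets me rewrite the displayed identity as
\[{\rm in}_\nu h(\sigma_i)={\rm in}_{\tilde\nu}\bigl((-1)^{k+1}h_k(\si)\delta_i^k\bigr),\]
reducing the problem to producing $a\in R_\nu$ whose $\tilde\nu$-initial form equals that of $(-1)^{k+1}h_k(\si)\in R_{\tilde\nu}$.

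This last replacement step is the main obstacle, because $h_k(\si)$ lives in $R_{\tilde\nu}$ and a priori not in $R_\nu$. My proposal is to set $a:=h(\sigma_{i_0})/\delta_{i_0}^k\in K$ for a fixed large $i_0$ and $e:=k$. The value $\nu(a)=\nu(h(\sigma_{i_0}))-k\nu(\delta_{i_0})$ equals $\tilde\nu(h_k(\si))\ge 0$, so $a\in R_\nu$; applying the displayed identity at $i=i_0$ and dividing by $\delta_{i_0}^k$ gives ${\rm in}_{\tilde\nu}a={\rm in}_{\tilde\nu}((-1)^{k+1}h_k(\si))$. To finish, I would check ${\rm in}_\nu(a\delta_i^k)={\rm in}_\nu h(\sigma_i)$ by writing
\[h(\sigma_i)-a\delta_i^k=\bigl(h(\sigma_i)-(-1)^{k+1}h_k(\si)\delta_i^k\bigr)+\bigl((-1)^{k+1}h_k(\si)-a\bigr)\delta_i^k,\]
noting that each summand has $\tilde\nu$-value strictly greater than $\nu(h(\sigma_i))$, and that since the left-hand side lies in $K$ this inequality passes from $\tilde\nu$ to $\nu$.
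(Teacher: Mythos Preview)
Your proof is correct and follows the route the paper itself acknowledges is available (the sentence introducing the proposition says this ``is also a consequence of Corollary~\ref{approx}''). Your idea of setting $a:=h(\sigma_{i_0})/\delta_{i_0}^k$ is a neat way to exhibit an element of $R_\nu$, rather than of $R_{\tilde\nu}$, with the right initial form: the numerator and denominator both live in $R$, the value computation $\nu(a)=\tilde\nu(h_k(\si))\ge 0$ places $a$ in $R_\nu$, and your final splitting of $h(\sigma_i)-a\delta_i^k$ is correct since both summands have $\tilde\nu$-value strictly above $\nu(h(\sigma_i))$ and the difference lies in $K$.

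The paper's own proof takes a different, more structural route. Instead of invoking Corollary~\ref{approx} directly, it passes to $R_\nu[X]$, writes the $Q(X)$-adic expansion $h(X)=\sum_j A_j(X)Q(X)^j$ where $Q(X)$ is a lift of the minimal polynomial of $\bar\sigma_\infty$ over $L$ (as in the proof of Proposition~\ref{computationNagata}), uses Lemma~\ref{quot} to see that each $\nu(A_j(\sigma_i))$ stabilizes, computes ${\rm in}_\nu Q(\sigma_i)={\rm in}_\nu(-a_{n-1}G_1(0)^{-1}\delta_i)$ from the identity $F=G_1Q+B$, and then applies Ostrowski--Kaplansky (Proposition~\ref{OK}) to the resulting terms. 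This yields $a$ as a product of quantities (the $b_j$ and powers of $-a_{n-1}G_1(0)^{-1}$) arising naturally from the $Q$-adic structure. Your argument is shorter and self-contained; the paper's argument buys a more explicit description of $a$ in terms of the decomposition machinery already developed for Proposition~\ref{computationNagata}, which is presumably why the authors present it despite noting that the Corollary~\ref{approx} route exists.
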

\begin{proof}Considering $h(X)$ as a polynomial in $R_\nu[X]$ we see that it suffices to prove the result when one of the coefficients of $h(X)$ is equal to one, so we assume this. We use the notation of the proof of Proposition~\ref{computationNagata}. In particular, let $Q(X)\in R_\nu[X]$ denote the lifting of the minimal polynomial of $\bar\sigma_\infty$ over $L$ and let us write the $Q(X)$-adic expansion of $h(X)$ as $$h(X)=A_r(X)Q(X)^r+A_{r-1}(X)Q(X)^{r-1}+\cdots +A_0(X)\ \ {\rm with}\ {\rm deg}A_j(X)<{\rm deg}Q(X).$$ 
We have a similar expression after passing to the quotient by $m_{\Psi_F}$. Let $J$ be the non--empty set consisting of those $j$ such that $0\leq j\leq r$ and $\widebar A_j(X)\neq0$. For all $j\in J$, the condition ${\rm deg}\widebar A_j(X)<{\rm deg}\widebar Q(X)$ implies that there exists $\varphi_j\in\Psi_F$ such that $\nu(A_j(\sigma_i))=\varphi_j$ for all $i$ large enough (see Lemma~\ref{quot}), and thus we can find $b_j\in R_\nu$ such that ${\rm in}_\nu A_j(\sigma_i)={\rm in}_\nu b_j$ for large $i$ (see Corollary~\ref{approx}). Evaluating in $\sigma_i$ the identity \eqref{B} of the proof of Proposition~\ref{computationNagata}, we get ${\rm in}_\nu Q(\sigma_i)={\rm in}_\nu(-a_{n-1}G_1(0)^{-1}\delta_i)$ for all $i\geq1$. Since $\nu(A_j(\sigma_i)Q(\sigma_i)^j)\notin\Psi_F$ if $j\notin J$, the result follows from Lemma~\ref{rat}, Corollary~\ref{OK2} and Proposition~\ref{OK} with $\gamma_i=\nu(Q(\sigma_i))$, and for all $j\in J$, $\beta_j=\varphi_j$ and $t_j=j$. The case where $(\nu(h(\sigma_i)))_{i\geq 1}$ is eventually constant corresponds to $e=0$ and then we have $a\in R$.\end{proof}
\begin{remark}
Since $R_\nu$ and its henselization $R_{\tilde\nu}$ have the same residue field, another way to state that $\nu$ and $\tilde\nu$ have the same value group is to say (see \cite[Proposition 4.1]{T}) that the natural graded injection ${\rm gr}_\nu R_\nu\hookrightarrow {\rm gr}_{\tilde\nu}R_{\tilde\nu}$ is an equality. It is therefore not surprising that the initial form of $h_k(\sigma_\infty)$ in Proposition~\ref{asym} appears as the initial form of an element of $R_\nu$.
\end{remark}
\begin{proof}[Proof of Theorem~\ref{ap}] 
Let us first assume that $z$ lies in $R_{\tilde\nu}$. Using the fact that $R_\nu$ and $R_{\tilde\nu}$ have the same residue field, by removing an element of $R_\nu$ of value zero, we can exclude the case where $\tilde\nu(z)=0$ and assume $\tilde\nu(z)>0$. Then $z$ lies in the maximal ideal of a Nagata extension $R_0[\si]_*\subset R_{\tilde\nu}$ of a normal local domain $R_0\subset R_\nu$ essentially of finite type over the prime ring. We call $\nu_0$ the restriction of $\nu$ to the fraction field $K_0$ of $R_0$ and $\Phi_0$ its value group. Let $F(X)\in R_0[X]$ be an irreducible Nagata polynomial defining $R_0[\si]_*$ and let $(\delta_i)_{i\in\N}$ be its Newton sequence of values. 
After Proposition~\ref{nu-ext}, since $R_0$ is noetherian and $\Phi_0$ of finite rank, we may assume that $\si$ is a limit of the pseudo--convergent sequence $(\sigma_i)_{i\geq1}$ associated to $F[X]$ in such a way that for any polynomial $P(X)\in R_0[X]$ with $0\leq\deg P(X)<\deg F(X)$, we have that ${\rm in}_{\tilde\nu}P(\si)={\rm in}_{\nu}P(\sigma_i)$ for large $i$.

We write $z=\frac{h(\si)}{q(\si)}$ in $R_0[\si]_*$, where $h(X),q(X)\in R_0[X]$ are polynomials of degree less than $\deg F(X)$, $h(0)\in m_{R_0}$, and $q(0)\notin m_{R_0}$. The polynomial $$H(X)=h(\si)q(X)-h(X)q(\si)\in R_0[\si]_*[X]$$ satisfies the equation $H(\si)=0$. It is of positive degree since otherwise it would be identically zero and $\frac{h(X)}{q(X)}$ would be constant. By the same argument as in the proof of Proposition~\ref{asym}, the $\tilde\nu$-value of $H(\sigma_i)$ is, for large $i$, of the form $\tilde\nu(H_k(\si)\delta_i^k)$ with $k\geq 1$. 
In addition, for all $i$ large enough we have $\nu(q(\sigma_i))=\tilde\nu(q(\si))=0$ and the $\tilde\nu$-value of $H(\sigma_i)=h(\si)q(\sigma_i)-h(\sigma_i)q(\si)$ coincides with that of $\frac{h(\si)}{q(\si)}-\frac{h(\sigma_i)}{q(\sigma_i)}$. Thus, by Lemma~\ref{cofinal} there exists $i_0\geq1$ such that $$\set{\tilde\nu\left(\frac{h(\si)}{q(\si)}-\frac{h(\sigma_i)}{q(\sigma_i)}\right)\,\vert\,i\geq i_0} \text{ is cofinal in } \varphi+\Psi,$$ where $\varphi=\tilde\nu(H_k(\si))\in\Phi_{\geq0}$ and $\Psi$ is the smallest convex subgroup of $\Phi$ containing all the $\nu(\delta_i)$. Note that since the $\delta_i$ associated to $F(X)$ are in $R_0$, the smallest convex subgroup of $\Phi_0$ containing the $\nu(\delta_i)$ is the intersection with $\Phi_0$ of the convex subgroup $\Psi_F$ of $\Phi$ associated to the polynomial $F(X)$ seen as a Nagata polynomial in $R_\nu[X]$ and is cofinal in it by Lemma~\ref{cofinal}. Next we prove that $\varphi+\Psi$ is cofinal in $\tilde\nu(z-K)$.

Let us first verify the inclusion $\varphi+\Psi\subset\tilde\nu(z-K)$. Given $\psi\in\Psi$ and an element $c\in K$ such that $\nu(c)=\varphi+\psi$, the cofinality we verified above implies that there exists $i\geq i_0$ such that $\tilde\nu\left(\frac{h(\si)}{q(\si)}-\frac{h(\sigma_i)}{q(\sigma_i)}\right)>\varphi+\psi$. Therefore we also have $\varphi+\psi=\tilde\nu\left(\frac{h(\si)}{q(\si)}-\frac{h(\sigma_i)}{q(\sigma_i)}+c\right)$, which is an element of $\tilde\nu(z-K)$.
 
Now let us prove that for any $c\in K$ there exists $\psi\in\Psi$ such that $\varphi+\psi>\tilde\nu(z-c)$. 
We may assume that $c\in R_\nu$ since otherwise $\tilde\nu(z-c)=\tilde\nu(c)<0$ and the result is clear. Then, enlarging the local ring $R_0$ if necesssary, we can also assume that $c\in R_0$. So let us consider the polynomial $h_{(c)}(X)=h(X)-cq(X)\in R_0[X]$ and write $z-c$ as $\frac{h_{(c)}(\si)}{q(\si)}$ in $R_0[\si]_*$. Since both $h_{(c)}(X)$ and $q(X)$ are nonzero polynomials of degree less than $\deg F(X)$, we have by Corollary~\ref{approx} or Proposition \ref{ae} the equality $${\rm in}_{\tilde\nu}(h_{(c)}(\si)q(\sigma_i))={\rm in}_{\tilde\nu} (h_{(c)}(\sigma_i)q(\si))\text{ for all }i\text{ large enough},$$ which implies the inequality $\tilde\nu\left(\frac{h_{(c)}(\si)}{q(\si)}\right)<\tilde\nu\left(\frac{h_{(c)}(\si)}{q(\si)}-\frac{h_{(c)}(\sigma_i)}{q(\sigma_i)}\right)$. But $\frac{h_{(c)}(\si)}{q(\si)}=z-c$ and $\frac{h_{(c)}(\si)}{q(\si)}-\frac{h_{(c)}(\sigma_i)}{q(\sigma_i)}=\frac{h(\si)}{q(\si)}-\frac{h(\sigma_i)}{q(\sigma_i)}$ for large $i$, so that we can find $i_1\geq i_0$ such that $$\tilde\nu(z-c)<\tilde\nu \left(\frac{h(\si)}{q(\si)}-\frac{h(\sigma_i)}{q(\sigma_i)}\right)=\varphi+\nu(\delta_i^k) \text{ for all } i\geq i_1.$$ This inequality gives the result we want in this case. 

If $z\in K^h\setminus  R_{\tilde\nu}$, using the fact that the value groups of $\nu$ and $\tilde\nu$ are the same, we choose $d\in m_\nu$ such that $dz\in m_{\tilde\nu}$, apply to $dz$ the argument we have just seen and use the fact that $\tilde\nu(dz-K)=\tilde\nu(z-K)+\nu(d)$. Replacing the element $\varphi$ associated to $dz$ as above by $\varphi-\nu(d)$ gives the result.\end{proof}

\section{Etale type and the henselian property}\label{sec4}
In this section we relate in greater generality Nagata polynomials with certain pseudo--convergent sequences and obtain a valuative characterization of the henselian property.\par

After Ostrowski and Kaplansky, one says that a pseudo--convergent sequence $(y_\tau)_{\tau\in T}$ of elements of a valued field $(K,\nu)$ is of \textit{algebraic type} if there exist polynomials $h(X)\in K[X]$ such that $(\nu(h(y_\tau)))_{\tau\in T}$ is not eventually constant. We propose the following, where as usual $\tau+1$ designates the successor of $\tau$ in the well ordered set $T$:\par\noindent
\begin{definition}\label{eta}Let $\nu$ be a valuation centered in a local domain $R$. A pseudo--convergent sequence $(y_\tau)_{\tau\in T}$ of elements of the maximal ideal $m_R$ of $R$ is \textit{of \'etale type} if there exist polynomials $h(X)\in R[X]$ such that one has the equality $\nu(h(y_\tau))=\nu(y_{\tau +1}-y_\tau)$ for $\tau\geq \tau_0\in T$, where $\tau_0$ may depend on the polynomial $h(X)$.
\end{definition}

Note that if the values of the $y_\tau$ are not eventually constant, then $\nu(y_{\tau+1})>\nu(y_\tau)$ for large $\tau$ and then $h(X)=X-a$ with $a\in R$ such that $\nu(a)>\nu(y_\tau)$ for all $\tau\in T$ is such a polynomial. The element $a\in R$ is a limit of $(y_\tau)_{\tau\in T}$. By the argument given in Lemma \ref{cofinal}, a pseudo-convergent sequence such that the $\nu(y_\tau)$ are not eventually constant is tested as being of \'etale type by a linear polynomial $X-a$ if $\nu(a)$ is not in the smallest convex subgroup containing the $\nu(y_\tau)$. \par

It is a classical result (see \cite[Theorem 4]{K}, \cite[Theorem 8.19]{Ku3}) that a valued field is maximal (has no non trivial immediate extension, which means a  valued extension with the same value group and the same residue field) if and only if all pseudo--convergent sequences in the field have a limit in the field. Since henselization is an immediate extension by Corollary~\ref{V}, maximal valued fields have a henselian valuation ring. We give a somewhat more precise and more general result in the following valuative criterion for the henselian property:\par\noindent
\begin{proposition}\label{valcrithen}
Let $R$ be a local domain with maximal ideal $m_R$, and let $\nu$ a valuation of finite rank centered in $R$. The local domain $R$ is henselian if and only if every pseudo--convergent sequence of elements of $m_R$ which is of \'etale type has a limit in $m_R$.
\end{proposition}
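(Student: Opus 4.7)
The plan is to prove the two implications separately. The forward direction (henselian implies every étale pseudo-convergent sequence has a limit in $m_R$) will be a fairly direct application of Hensel's lemma to the witness polynomial, while the reverse direction will require iterating the Newton process and using the finite rank hypothesis to ensure termination.

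For the forward direction, let $(y_\tau)_{\tau\in T}$ be a pseudo-convergent sequence of elements of $m_R$ of étale type, with witness polynomial $h(X)\in R[X]$, so that $\nu(h(y_\tau))=\nu(\delta_\tau)>0$ for $\tau\geq\tau_0$, where $\delta_\tau:=y_{\tau+1}-y_\tau$. First I would extract local information about $h$. Since $\nu(h(y_\tau))>0$ and $y_\tau\in m_R$, reducing modulo $m_R$ forces $h(0)\in m_R$. From the Taylor expansion
\[ h(y_{\tau+1}) = h(y_\tau) + h'(y_\tau)\delta_\tau + \sum_{m\geq 2} h_m(y_\tau)\delta_\tau^m \]
and the fact that the higher-order terms have $\nu$-value at least $2\nu(\delta_\tau)$, the strict inequality $\nu(h(y_{\tau+1}))=\nu(\delta_{\tau+1})>\nu(\delta_\tau)$ can only hold if cancellation occurs between $h(y_\tau)$ and $h'(y_\tau)\delta_\tau$, which forces $h'(y_\tau)$, and hence $h'(0)$, to be a unit. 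Thus the reduction $\bar h(X)\in(R/m_R)[X]$ has $0$ as a simple root, and by Hensel's lemma (since $R$ is henselian) $h$ admits a root $\alpha\in m_R$. Expanding $h(y_\tau)-h(\alpha)$ around $\alpha$ and using that $h'(\alpha)$ is a unit yields $\nu(y_\tau-\alpha)=\nu(h(y_\tau))=\nu(\delta_\tau)$, which is precisely the condition for $\alpha$ to be a pseudo-limit of $(y_\tau)_{\tau\in T}$.

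For the reverse direction, assume every étale pseudo-convergent sequence of elements of $m_R$ has a limit in $m_R$, and let $F(X)\in R[X]$ be a Nagata polynomial. The Newton sequences $(\delta_i)_{i\in\N}$ and $(\sigma_i)_{i\geq 1}$ attached to $F$ produce a pseudo-convergent sequence of elements of $m_R$ that is of étale type with $F$ as witness, since the identity $F'(\sigma_i)\delta_i=-F(\sigma_i)$ combined with $F'(\sigma_i)$ being a unit gives $\nu(F(\sigma_i))=\nu(\delta_i)=\nu(\sigma_{i+1}-\sigma_i)$. By hypothesis this sequence has a limit $\sigma^{(0)}\in m_R$. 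A short check (the inequality $\nu(\sigma^{(0)}-\sigma_i)>\nu(\delta_i)$ is incompatible with the pseudo-limit condition at step $i+1$) forces $\nu(\sigma^{(0)}-\sigma_i)=\nu(\delta_i)$ for all large $i$, and the Taylor expansion of $F$ around $\sigma_i$ then yields $\nu(F(\sigma^{(0)}))\geq\nu(\delta_i)$ for every large $i$. If $F(\sigma^{(0)})=0$, this gives a root in $m_R$; otherwise $\gamma^{(0)}:=\nu(F(\sigma^{(0)}))$ is an upper bound for $\{\nu(\delta_i)\}$, which by Lemma~\ref{cofinal} is cofinal in $\Psi_F$, so $\gamma^{(0)}$ dominates every element of $\Psi_F$ and in particular does not belong to $\Psi_F$.

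To handle the non-root case I would iterate the construction. Starting from $\sigma^{(0)}$, run a fresh Newton iteration for $F$: the new increments $(\delta^{(1)}_k)_{k\in\N}$ satisfy $\nu(\delta^{(1)}_0)=\gamma^{(0)}$ and $\nu(\delta^{(1)}_{k+1})\geq 2\nu(\delta^{(1)}_k)$, so the partial sums $\sigma'_k=\sigma^{(0)}+\sum_{j=0}^{k-1}\delta^{(1)}_j\in m_R$ form a pseudo-convergent sequence of étale type (again with $F$ as witness). The hypothesis provides a limit $\sigma^{(1)}\in m_R$, and the same analysis as before shows that either $F(\sigma^{(1)})=0$ or $\gamma^{(1)}:=\nu(F(\sigma^{(1)}))$ lies strictly above the convex subgroup $\Psi^{(1)}$ generated by the $\nu(\delta^{(1)}_k)$, which in turn strictly contains $\Psi_F$ because it contains $\gamma^{(0)}\notin\Psi_F$. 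Iterating yields a strictly increasing chain of convex subgroups $\Psi_F=\Psi^{(0)}\subsetneq\Psi^{(1)}\subsetneq\Psi^{(2)}\subsetneq\cdots$ of the value group of $\nu$. The main obstacle is ensuring this chain terminates: since $\nu$ has finite rank, the rank bounds the length of any strictly increasing chain of convex subgroups, so after at most $\mathrm{rank}(\nu)$ steps the process must end with some $\sigma^{(k)}\in m_R$ satisfying $F(\sigma^{(k)})=0$. This shows every Nagata polynomial has a root in $m_R$, hence $R$ is henselian.
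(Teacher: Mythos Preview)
Your proof is correct. The forward direction matches the paper's argument almost exactly: both establish via the Taylor expansion at $y_\tau$ that $h'(0)$ is a unit and $h(0)\in m_R$, invoke the henselian property to get a root $\alpha\in m_R$, and then verify $\nu(y_\tau-\alpha)=\nu(\delta_\tau)$. The paper phrases the last step as factoring $h(X)=(X-\alpha)G(X)$ and applying Lemma~\ref{fac} to see $G(0)$ is a unit, while you use the Taylor expansion around $\alpha$; these are equivalent.

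For the reverse direction you take a slightly different route. The paper first applies Lemma~\ref{good} (the good-variable lemma, which uses finite rank) to replace $F$ by a translate so that $\Psi_F$ already equals the intrinsic convex subgroup $\Psi$; then any pseudo-limit $y\in m_R$ gives $\nu(F(y))\geq\nu(\delta_i)$ for all $i$ while at the same time $\nu(F(y))\in\Psi=\Psi_F$, an immediate contradiction to cofinality. You instead iterate the Newton--plus--pseudo-limit step directly: each time $F(\sigma^{(k)})\neq 0$ you restart Newton's method at $\sigma^{(k)}$ and produce a strictly larger convex subgroup $\Psi^{(k+1)}$, terminating by finite rank. This is essentially the proof of Lemma~\ref{good} inlined into the argument, with the pseudo-limit hypothesis supplying at each stage the element $a\in m_R$ with $\nu(F(a))\notin\Psi_{F_\alpha}$ that Lemma~\ref{good} needs. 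Your version is a bit more self-contained; the paper's version ties the argument back to the machinery of \S\ref{subsec:firststeps}.
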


\begin{proof}
A local ring in which every Nagata polynomial has a root in the maximal ideal is henselian; see \cite[Lemma, p.94]{L}. This and the fact that the henselization is the inductive limit of Nagata extensions imply that the local domain $R$ is henselian if and only if every Nagata polynomial in $R[X]$ has a root in $m_R$.

Let us assume that every pseudo--convergent sequence of elements of $m_R$ which is of \'etale type has a limit in $m_R$. We proceed by contradiction and suppose that there exists a Nagata polynomial $F(X)\in R[X]$ of degree $n$ such that $F(a)\neq0$ for all $a\in m_R$. By Lemma \ref{good}, we may assume that $X$ is a good variable. Then the polynomial $F(X)$ comes with the pseudo--convergent sequence of partial sums $(\sigma_i)_{i\geq1}$ and the associated convex subgroup $\Psi_F$ of the value group of $\nu$, which equals the intrinsic convex subgroup $\Psi$. Our assumption implies that $(\sigma_i)_{i\geq1}$ has a limit $y\in m_R$. We have $F(y)=F(\sigma_i)+\sum_{m=1}^n F_m(\sigma_i)(y-\sigma_i)^m$ as in the identity \eqref{eq:1} of Section~\ref{sec3}, and since $\nu(y-\sigma_i)\geq \nu(\delta_i)$ and $\nu(F(\sigma_i))=\nu(\delta_i)$, we get that $\nu(F(y))\geq\nu(\delta_i)$ for all $i$. Since $\nu(F(y))\in\Psi=\Psi_F$ and the $\nu(\delta_i)$ are strictly increasing and cofinal in $\Psi_F$, this gives us a contradiction.

Let us now assume that $R$ is henselian and let $(y_\tau)_{\tau \in T}$ be a pseudo--convergent sequence of \'etale type of elements of $m_R$. Let $h(X)\in R[X]$ be a polynomial verifying the equality $\nu(h(y_\tau))=\nu(y_{\tau +1}-y_\tau)$ for large $\tau$. Suppose that $h(X)$ is, up to multiplication by an invertible element of $R$, a Nagata polynomial. Then $h(X)$ has a root $y\in m_R$ since $R$ is henselian. We can write $h(X)=(X-y)G(X)$ in $R[X]$ and by Lemma~\ref{fac}, since $X-y$ is a Nagata polynomial, we have that $G(X)\notin(m_R,X)$ so that $\nu(h(y_\tau))=\nu(y_{\tau +1}-y_\tau)=\nu(y-y_\tau)$ for large $\tau$, which shows that $y$ is a limit of $(y_\tau)_{\tau \in T}$. Therefore, in order to end the proof, it is sufficient to show that $h(X)$ is a Nagata polynomial up to multiplication by a unit of $R$.

Writing $h(X)=b_0X^s+\cdots +b_{s-1}X+b_s=b_0\prod_{j=1}^s (X-r_j)$ in a splitting field of $h(X)$, after extension of $\nu$ we see that our assumption immediately implies that $\nu(b_0)=0$ and exactly one of the $r_j$ is a limit of $(y_\tau)_{\tau \in T}$ while the values of the other $y_\tau-r_j$ are zero. So after multiplication by an invertible element of $R$ we may assume that $h(X)$ is unitary. We have the identity $h(y_{\tau +1})-h(y_\tau)=\sum _{m=1}^sh_m(y_\tau)(y_{\tau +1}-y_\tau)^m$ as in the identity \eqref{eq:2} of Section~\ref{sec3}. Since $\nu(h(y_{\tau+1}))>\nu(h(y_\tau))$ at least for large $\tau$, the previous equality then implies the equality 
\begin{equation*}
\begin{split}
{\rm in}_\nu h(y_{\tau}) & = -{\rm in}_\nu\big(\sum _{m=1}^sh_m(y_{\tau})(y_{\tau+1}-y_\tau)^m\big)\\
& = -{\rm in}_\nu\big((y_{\tau+1}-y_\tau)(h_1(y_\tau)+\sum _{m=2}^sh_m(y_{\tau})(y_{\tau+1}-y_\tau)^{m-1})\big),
\end{split}
\end{equation*}
so that $h(y_\tau)$ can have the same value as $y_{\tau+1}-y_\tau$ only if $h_1(y_\tau)$ is invertible, which in turn implies, since $y_\tau\in m_R$, that the coefficient $b_{s-1}$ of $X$ in $h(X)$ is invertible in $R$. Finally, we get that $h(0)=b_s\in m_R$ since $y_\tau\in m_R$ for all $\tau$. This shows that $h(X)$ is a Nagata polynomial.\end{proof}

\bibliographystyle{alpha}

\end{document}